\DeclareMathOperator*{\argmin}{arg\,min}
\DeclareMathOperator{\bor}{\mathcal{B}}
\DeclareMathOperator{\diag}{\mathsf{diag}}
\DeclareMathOperator{\expect}{\mathbb{E}}
\DeclareMathOperator{\prob}{\mathbb{P}}
\DeclareMathOperator{\rk}{rk}
\DeclareMathOperator{\spn}{sp}
\DeclareMathOperator{\trans}{\mathsf{T}}
\newcommand{\eps}{\varepsilon}
\newcommand{\iid}{i.i.d.\@ }
\newcommand{\real}{\mathbb{R}}
\newcommand{\subs}{\subseteq}
\newcommand{\BlackBox}{\rule{1.5ex}{1.5ex}} 
\newenvironment{proof}{\par\noindent{\bf Proof\ }}{\hfill\BlackBox\\[2mm]}
\newtheorem{theorem}{Theorem}
\newtheorem{lemma}[theorem]{Lemma}
\newtheorem{corollary}[theorem]{Corollary}
\begin{document}

\begin{frontmatter}
\title{The Goldenshluger--Lepski Method for Constrained Least-Squares Estimators over RKHSs}
\runtitle{Lepski for Constrained Estimators over RKHSs}

\begin{aug}
\author{\fnms{Stephen} \snm{Page} \thanksref{a1,e1} \ead[label=e1,mark]{s.page@lancaster.ac.uk}}
\and
\author{\fnms{Steffen} \snm{Gr\"{u}new\"{a}lder} \thanksref{a2} \ead[label=e2]{s.grunewalder@lancaster.ac.uk}}
\runauthor{Stephen Page and Steffen Gr\"{u}new\"{a}lder}
\affiliation{Lancaster University}
\address[a1]{STOR-i, Lancaster University, Lancaster, LA1 4YF, United Kingdom. \printead{e1}}
\address[a2]{Department of Mathematics and Statistics, Lancaster University, Lancaster, LA1 4YF, United Kingdom. \printead{e2}}
\end{aug}

\begin{abstract}
We study an adaptive estimation procedure called the Goldenshluger--Lepski method in the context of reproducing kernel Hilbert space (RKHS) regression. Adaptive estimation provides a way of selecting tuning parameters for statistical estimators using only the available data. This allows us to perform estimation without making strong assumptions about the estimand. In contrast to procedures such as training and validation, the Goldenshluger--Lepski method uses all of the data to produce non-adaptive estimators for a range of values of the tuning parameters. An adaptive estimator is selected by performing pairwise comparisons between these non-adaptive estimators. Applying the Goldenshluger--Lepski method is non-trivial as it requires a simultaneous high-probability bound on all of the pairwise comparisons. In the RKHS regression context, we choose our non-adaptive estimators to be clipped least-squares estimators constrained to lie in a ball in an RKHS. Applying the Goldenshluger--Lepski method in this context is made more complicated by the fact that we cannot use the $L^2$ norm for performing the pairwise comparisons as it is unknown. We use the method to address two regression problems. In the first problem the RKHS is fixed, while in the second problem we adapt over a collection of RKHSs.
\end{abstract}

\begin{keyword}
\kwd{Adaptive Estimation}
\kwd{Goldenshluger--Lepski Method}
\kwd{RKHS Regression}
\end{keyword}
\end{frontmatter}

\section{Introduction}

In nonparametric statistics, it is assumed that the estimand belongs to a very large parameter space in order to avoid model misspecification. Such misspecification can lead to large approximation errors and poor estimator performance. However, it is often challenging to produce estimators which are robust against such large parameter spaces. An important tool which allows us to achieve this aim is adaptive estimation. Adaptive estimators behave as if they know the true model from a collection of models, despite being a function of the data. In particular, adaptive estimators can often achieve the same optimal rates of convergence as the best estimators when the true model is known.

In this paper, we study an adaptive estimation procedure called the Goldenshluger--Lepski method in the context of reproducing kernel Hilbert space (RKHS) regression. The Goldenshluger--Lepski method works by performing pairwise comparisons between non-adaptive estimators with a range of values for the tuning parameters. As far as we are aware, this is the first time that this method has been applied in the context of RKHS regression. The Goldenshluger--Lepski method, introduced in the series of papers \cite{goldenshluger2008universal, goldenshluger2009structural, goldenshluger2011bandwidth, goldenshluger2013general}, is an extension of Lepski's method. While Lepski's method focusses on adaptation over a single parameter, the Goldenshluger--Lepski method can be used to perform adaptation over multiple parameters.

The Goldenshluger--Lepski method operates by selecting an estimator which minimises the sum of a proxy for the unknown bias and an inflated variance term. The proxy for the bias is calculated by performing pairwise comparisons between the estimator in question and all estimators which are in some sense less smooth than this estimator. A key challenge in applying the Goldenshluger--Lepski method is proving a high-probability bound on all of these pairwise comparisons simultaneously. This bound is called a majorant.

We now describe the RKHS regression problem studied in this paper. We denote our covariate space by $S$ and we assume that the regression function lies in an interpolation space between $C(S)$, the space of continuous functions equipped with the supremum norm, and an RKHS. Depending on the setting, this RKHS may be fixed or we may perform adaptation over a collection of RKHSs. The non-adaptive estimators we use in this context are clipped versions of least-squares estimators which are constrained to lie in a ball of predefined radius in an RKHS. These estimators are discussed in detail in \cite{page2017ivanov}. Constraining an estimator to lie in a ball of predefined radius is a form of Ivanov regularisation (see \cite{oneto2016tikhonov}).
One advantage of these estimators is that there is a clear way of producing a majorant for them, especially when the RKHS is fixed. This is because we can control the estimator constrained to lie in a ball of radius $r$ by bounding quantities of the form $r Z$ for some random variable $Z$ which does not depend on $r$. 


When the RKHS is fixed, the only tuning parameter to be selected is the radius of the ball in which the least-squares estimator is constrained to lie. Estimators for which the radius is larger are considered to be less smooth. In order to provide a majorant for the Goldenshluger--Lepski method, we must prove regression results which control these estimators for all radii simultaneously. When we perform adaptation over a collection of RKHSs, we must prove regression results which control the same estimators for all RKHSs and all ball radii in these RKHSs simultaneously. We demonstrate this approach for a collection of RKHSs with Gaussian kernels. Estimators for which both the width parameter of the Gaussian kernel is smaller and the radius of the ball in the RKHS is larger are considered to be less smooth. These results extend those of \cite{page2017ivanov}.

Our main results are Theorems \ref{tInterLepBound} (page \pageref{tInterLepBound}) and \ref{tInterGaussLepBound} (page \pageref{tInterGaussLepBound}). These show that a fixed quantile of the squared $L^2 (P)$ error of a clipped version of the estimator produced by the Goldenshluger--Lepski method is of order $n^{- \beta/(1+\beta)}$. Here, $n$ is the number of data points and $\beta$ parametrises the interpolation space between $C(S)$ and the RKHS containing the regression function. We use $C(S)$ when interpolating so that we have direct control over approximation errors in the $L^2 (P_n)$ norm. Theorem \ref{tInterLepBound} addresses the case in which the RKHS is fixed and Theorem \ref{tInterGaussLepBound} addresses the case in which we perform adaptation over a collection of RKHSs with Gaussian kernels. The order $n^{- \beta/(1+\beta)}$ for the squared $L^2 (P)$ error of the adaptive estimators matches the order of the smallest bounds obtained in \cite{page2017ivanov} for the squared $L^2 (P)$ error of the non-adaptive estimators. In the sense discussed in \cite{page2017ivanov}, this order is the optimal power of $n$ if we make the slightly weaker assumption that the regression function is an element of the interpolation space between $L^2 (P)$ and the RKHS parametrised by $\beta$.
In particular, for a fixed $\beta \in (0, 1)$, \citet{steinwart2009optimal} show that there is an instance of our problem such that the following holds. For all estimators $\hat f$ of $g$, for some $\eps > 0$, we have
\begin{equation*}
\lVert \hat f - g \rVert_{L^2 (P)}^2 \geq C_{\alpha, \eps} n^{- \alpha}
\end{equation*}
with probability at least $\eps$ for all $n \geq 1$, for some constant $C_{\alpha, \eps} > 0$, for all $\alpha > \beta/(1 + \beta)$. This implies that for all estimators $\hat f$ of $g$, we have
\begin{equation*}
\expect \left (\lVert \hat f - g \rVert_{L^2 (P)}^2 \right ) \geq C_{\alpha, \eps} \eps n^{- \alpha}
\end{equation*}
for all $n \geq 1$, for all $\alpha > \beta/(1 + \beta)$. In this sense, our expectation bound in this setting is optimal because it attains the order $n^{- \beta/(1 + \beta)}$, the smallest possible power of $n$. 

Under further assumptions on the effective dimension of the RKHS faster rates of convergence can be obtained. For instance, in \citet{steinwart2009optimal} optimal rates of convergence under assumptions on the effective dimension are attained and our rates correspond to their worst-case rates. Also see \cite{LU19,BLAN19} for results on estimating the effective dimension from data.

\subsection{Literature Review}

Lepski's method was introduced in the series of papers \cite{lepski1991asymptotically, lepskii1991problem, lepskii1993asymptotically} as a method for adaptation over a single parameter. It has since been studied in, for example, \cite{birge2001alternative} and \cite{gine2015mathematical}. Lepski's method selects the smoothest non-adaptive estimator from a collection, subject to a bound on a series of pairwise comparisons involving all estimators at most as smooth as the resulting estimator. The method can only adapt to one parameter because of the need for an ordering of the collection of non-adaptive estimators.

Lepski's method has been applied to RKHS regression under the name of the balancing principle. However, as far as we are aware, Lepski's method has not been used to target the true regression function, but instead an RKHS element which approximates the true regression function. In \cite{de2010adaptive}, the authors note the difficulty in using Lespki's method to control the squared $L^2 (P)$ error of an adaptive version of a support vector machine (SVM). This difficulty arises because Lepski's method generally requires the norm we are interested in controlling to be known in order to perform the pairwise comparisons. However, $P$ is unknown in this situation.
The authors of \cite{de2010adaptive} get around the problem that $P$ is unknown as follows. Lepski's method is used to control the known squared $L^2 (P_n)$ error and squared RKHS error of two different adaptive SVMs. The results of these procedures are combined to produce an adaptive SVM whose squared $L^2 (P)$ error is bounded. The above alteration is also noted in \cite{lu2018balancing}. Furthermore, the authors show that it is possible to greatly reduce the number of pairwise comparisons which must be performed to produce an adaptive estimator. This is done by only comparing each estimator to the estimator which is next less smooth.

Lepski's method is common in the inverse problems literature and is there also known as the balancing principle. In this context it is not assumed that the target function lies in a particular RKHS but within a scale of Hilbert spaces. 
Scales of Hilbert spaces have been developed in \cite{KR66} and  have been introduced to the inverse problems literature in \cite{MAIR96} and \cite{MAT01}. These scales of Hilbert spaces are closely linked to interpolation spaces \cite{KR66}.  
Adaptation over these scales is typically achieved by 
combining Tikhonov regularisation with Lepski's method.  This approach achieves in various settings close to optimal rates of convergence (e.g. \cite{GOLD03, PRI19}).

The Goldenshluger--Lepski method extends Lepski's method in order to perform adaptation over multiple parameters. The method is introduced in the series of papers \cite{goldenshluger2008universal, goldenshluger2009structural, goldenshluger2011bandwidth, goldenshluger2013general}. The first two papers concentrate on function estimation in the presence of white noise. The first paper considers the problem of pointwise estimation, while the second paper examines estimation in the $L^p$ norm for $p \in [1, \infty]$. The third paper produces adaptive bandwidth estimators for kernel density estimation and the fourth paper considers general methodology for selecting a linear estimator from a collection.

Another popular approach to model selection is to use a  training and validation set. The training set is used to produce a collection of non-adaptive estimators for a range of different values for the tuning parameters and the validation set is used to select the best estimator from this collection. This selection is performed by calculating a proxy for the cost function that we wish to minimise. An example of using training and validation to perform adaptation over a Gaussian kernel parameter for an SVM can be found in \cite{eberts2013optimal}. The procedure produces an adaptive estimator of a bounded regression function from a range of Sobolev spaces. This estimator is analysed using union bounding, as opposed to the chaining techniques used to analyse the Goldenshluger--Lepski method in this paper. The training and validation approach is studied in a more general context in \cite{yao2010} using inverse problem techniques.  One important advantage of the Goldenshluger--Lepski method in comparison to training and validation is that it uses all of the data to calculate the non-adaptive estimators. This is because it does not require data for calculating a proxy cost function. However, the Goldenshluger--Lepski method does require us to calculate a majorant, as discussed above, which is often a challenging task.

There are various other approaches to gain adaptive estimators. For instance, in \cite{BAR02} regression under random design is studied. The estimators are chosen adaptively from a family of finite dimensional subspaces of $L^2(\nu)$, where $\nu$ is a known measure on the covariate space. The estimation approach selects a member of the family of subspaces by minimising a linear combination of the best least-squares error that can be achieved within the class and a penalty term which penalises model complexity. In contrast to Lepski's method the model class is selected based solely on this value  and no pairwise comparison between model classes is performed.
Model selection based on penalised least-squares values is a well-studied area, in particular, in the Gaussian noise case. To mention but a few references, in  \cite{BIR01} a versatile framework for model selection under a penalised least-squares criterion is developed and in \cite{BIR07} the role of the penalty term in this framework is analysed in depth. Closely related are earlier works  \cite{BIR97,BAR99} where penalty terms are used in conjunction with sieves to produce adaptive estimators.

We perform in this paper adaptation over a family of kernel functions. This approach touches on the topic of multiple kernel learning (MKL) where a family of kernels is given and a regressor is chosen from the corresponding family of RKHSs.  Early MKL papers like \cite{CHAP02,BACH04} focus on the optimisation aspect of the problem, i.e. how to efficiently find a suitable kernel within a large family of kernels to gain small empirical errors.  In \cite{GON11} a broad overview of MKL techniques up to the year 2011 is given. A more recent overview is contained in \cite{PER17}. Much of the MKL literature focuses solely on the optimisation problem and not on controlling the risk of estimators. To the best of our knowledge,  the risk has not been studied in the MKL literature under our assumption that the unknown regression function can lie outside of all the involved RKHSs.

\section{Problem Definition}

We give a formal definition of the RKHS regression problem. First, recall that an RKHS $H$ on $S$ is a Hilbert space of real-valued functions on $S$ such that, for all $x \in S$, there is some $k_x \in H$ such that $h(x) = \langle h, k_x \rangle_H$ for all $h \in H$. The function $k(x_1, x_2) = \langle k_{x_1}, k_{x_2} \rangle_H$ for $x_1, x_2 \in S$ is known as the kernel and is symmetric and positive-definite.

For a topological space $T$, let $\bor(T)$ be its Borel $\sigma$-algebra. Let $(S, \mathcal{S})$ be a measurable space and $(X_i, Y_i)$ for $1 \leq i \leq n$ be \iid $(S \times \real, \mathcal{S} \otimes \bor(\real))$-valued random variables on the probability space $(\Omega, \mathcal{F}, \prob)$. We assume $X_i \sim P$ and $\expect(Y_i^2) < \infty$, where $\expect$ denotes integration with respect to $\prob$. We have $\expect(Y_i \vert X_i) = g(X_i)$ almost surely for some function $g$ which is measurable on $(S, \mathcal{S})$ (Section A3.2 of \cite{williams1991probability}). Our goal is to estimate $g$ from the data $(X_i, Y_i)$, $1 \leq i \leq n$. Since $\expect(Y_i^2) < \infty$, it follows that $g \in L^2 (P)$ by Jensen's inequality. We consider the quadratic loss with the corresponding risk function for an estimate $\hat h$ given by
\[
\expect(\hat h(X) - Y)^2,
\]
where the pair $(X,Y)$ has the same distribution as $(X_1,Y_1)$ and is independent of $(X_i,Y_i), 1 \leq i \leq n$. Furthermore, $\hat h$ is assumed measurable with respect to $(X_i,Y_i), 1 \leq i \leq n$, and to attain values in an RKHS or in a collection of RKHSs.

\newpage

We assume throughout that
\begin{enumerate}[label={($g$1)}]
\item \hfil $\lVert g \rVert_\infty \leq C$ for $C > 0$. \label{g1}
\end{enumerate}

\vspace{0.5\baselineskip}

We also need to make an assumption on the behaviour of the errors of the response variables $Y_i$ for $1 \leq i \leq n$. Let $U$ and $V$ be random variables on $(\Omega, \mathcal{F}, \prob)$. We say $U$ is $\sigma^2$-subgaussian if
\begin{equation*}
\expect(\exp(t U)) \leq \exp(\sigma^2 t^2 / 2)
\end{equation*}
for all $t \in \real$. We say $U$ is $\sigma^2$-subgaussian given $V$ if
\begin{equation*}
\expect(\exp(t U) \vert V) \leq \exp(\sigma^2 t^2 / 2)
\end{equation*}
almost surely for all $t \in \real$. We assume
\begin{enumerate}[label={($Y$)}]
\item \hfil $Y_i - g(X_i)$ is $\sigma^2$-subgaussian given $X_i$ for $1 \leq i \leq n$. \label{Y}
\end{enumerate}

\section{Regression for a Fixed RKHS}

We continue by providing simultaneous bounds on our collection of non-adaptive estimators for a fixed RKHS. Our results in this section depend on how well the regression function $g$ can be approximated by elements of an RKHS $H$ with kernel $k$. We make the following assumptions.
\begin{enumerate}[label={($H$)}]
\item {The RKHS $H$ with kernel $k$ has the following properties:
\begin{itemize}
\item The RKHS $H$ is separable.
\item The kernel $k$ is bounded.
\item The kernel $k$ is a measurable function on $(S \times S, \mathcal{S} \otimes \mathcal{S})$.
\end{itemize}} \label{H}
\end{enumerate}
We define
\begin{equation*}
\lVert k \rVert_{\diag} = \sup_{x \in S} k(x, x) < \infty.
\end{equation*}
We can guarantee that $H$ is separable by, for example, assuming that $k$ is continuous and $S$ is a separable topological space (Lemma 4.33 of \cite{steinwart2008support}). The fact that $H$ has a kernel $k$ which is measurable on $(S \times S, \mathcal{S} \otimes \mathcal{S})$ guarantees that all functions in $H$ are measurable on $(S, \mathcal{S})$ (Lemma 4.24 of \cite{steinwart2008support}).

Let $B_H$ be the closed unit ball of $H$ and $r > 0$. We define the estimator
\begin{equation*}
\hat h_r = \argmin_{f \in r B_H} \frac{1}{n} \sum_{i = 1}^n (f(X_i) - Y_i)^2
\end{equation*}
of the regression function $g$. We make this definition unique by demanding that $\hat h_r \in \spn \{ k_{X_i} : 1 \leq i \leq n \}$ (see Lemma 3 of \cite{page2017ivanov}). We also define $\hat h_0 = 0$. 
Lemma \ref{lEst} on page \ref{lEst}  states that this estimator is well defined.

Since we assume \ref{g1}, that $g$ is bounded in $[- C, C]$, we can make $\hat h_r$ closer to $g$ by constraining it to lie in the same interval. As in \cite{page2017ivanov}, we define the projection $V : \real \to [-C, C]$ by
\begin{equation*}
V(t) =\left \{
\begin{array}{lll}
- C & \text{if} &t < - C \\
t & \text{if} &\lvert t \rvert \leq C \\
C & \text{if} &t > C
\end{array}
\right .
\end{equation*}
for $t \in \real$. For a function $f:S\rightarrow \mathbb{R}$ we write $V f$ for the composition of $V$ and $f$.

We now prove a series of result which allow us to control $\hat h_r$ for $r \geq 0$ simultaneously, extending the results of \cite{page2017ivanov} while using similar proof techniques. This is crucial in order to apply the Goldenshluger--Lepski method to these estimators. The results assign probabilities to events which occur for all $r \geq 0$ and all $h_r \in r B_H$. These events are measurable due to the separability of $[0, \infty)$ and $r B_H$, as well as the continuity in $r$ of the quantities in question, including $\hat h_r$ by Lemma \ref{lEst}. By Lemma 2 of \cite{page2017ivanov}, we have
\begin{equation*}
\lVert \hat h_r -  h_r \rVert_{L^2 (P_n)}^2 \leq \frac{4}{n} \sum_{i = 1}^n (Y_i - g(X_i)) (\hat h_r (X_i) - h_r (X_i)) + 4 \lVert h_r - g \rVert_{L^2 (P_n)}^2
\end{equation*}
for all $r > 0$ and all $h_r \in r B_H$. We can remove $\hat h_r$ in the first term on the right-hand side by taking a supremum over $r B_H$. After applying the reproducing kernel property and the Cauchy--Schwarz inequality, we obtain a quadratic form of subgaussians which can be controlled using Lemma 36 of \cite{page2017ivanov}. See Lemma 22 in \cite{page2018supplement} on page 1 for details.

It is useful to be able to transfer a bound on the squared $L^2 (P_n)$ error of an estimator, including the result above, to a bound on the squared $L^2 (P)$ error of the estimator. By using Talagrand's inequality, we can obtain a high-probability bound on
\begin{equation*}
\sup_{r > 0} \sup_{f_1, f_2 \in r B_H} \frac{1}{r} \left \lvert \lVert V f_1 - V f_2 \rVert_{L^2 (P_n)}^2 - \lVert V f_1 - V f_2 \rVert_{L^2 (P)}^2 \right \rvert
\end{equation*}
by proving an expectation bound on the same quantity. By using symmetrisation (Lemma 2.3.1 of \cite{van1996weak}) and the contraction principle for Rademacher processes (Theorem 3.2.1 of \cite{gine2015mathematical}), we again obtain a quadratic form of subgaussians, which in this case are Rademacher random variables. The result is stated in Lemma 24 in \cite{page2018supplement} on page 3.

To capture how well $g$ can be approximated by elements of $H$, we define
\begin{equation*}
I_\infty (g, r) = \inf \left \{ \lVert h_r - g \rVert_\infty^2 : h_r \in r B_H \right \}
\end{equation*}
for $r \geq 0$. We use this measure of approximation as it is compatible with the use of the bound
\begin{equation*}
\lVert h_r - g \rVert_{L^2 (P_n)}^2 \leq \lVert h_r - g \rVert_\infty^2
\end{equation*}
in the proof of Lemma 22. It is easy to show that $I_\infty (g, r)$ is continuous in $r$ under Assumption \ref{H}.

We obtain a bound on the squared $L^2 (P)$ error of $V \hat h_r$ by combining Lemmas 22 and 24.

\begin{theorem} \label{tBound}
Assume \ref{g1}, \ref{Y} and \ref{H}. Let $t \geq 1$ and recall the definitions of $A_{1, t}$ and $A_{2, t}$ from Lemmas 22 and 24. On the set $A_{1, t} \cap A_{2, t} \in \mathcal{F}$, for which $\prob(A_{1, t} \cap A_{2, t}) \geq 1 - 2 e^{-t}$, we have
\begin{equation*}
\lVert V \hat h_r - g \rVert_{L^2 (P)}^2 \leq \frac{2 \lVert k \rVert_{\diag}^{1/2} (97 C + 20 \sigma) r t^{1/2}}{n^{1/2}} + \frac{16 \lVert k \rVert_{\diag}^{1/2} C r t}{3 n} + 10 I_\infty (g, r)
\end{equation*}
simultaneously for all $r \geq 0$.
\end{theorem}

\subsection{The Goldenshluger--Lepski Method for a Fixed RKHS}

We now produce bounds on our adaptive estimator for a fixed RKHS. Lemma \ref{lComp} (p. \pageref{lComp}), which is a simple consequence of Lemma 22 can be used to define the majorant of the non-adaptive estimators. This motivates the following definition of the adaptive estimator used in the Goldenshluger--Lepski method.

Let $R \subs [0, \infty)$ be closed and non-empty. The Goldenshluger--Lepski method defines an adaptive estimator using
\begin{equation}
\hat r = \argmin_{r \in R} \left ( \sup_{s \in R, s \geq r} \left ( \lVert \hat h_r - \hat h_s \rVert_{L^2 (P_n)}^2 - \frac{\tau (r + s)}{n^{1/2}} \right ) + \frac{2 (1 + \nu) \tau r}{n^{1/2}} \right ) \label{eEst}
\end{equation}
for tuning parameters $\tau, \nu > 0$. The supremum of pairwise comparisons can be viewed as a proxy for the unknown bias, while the other term is an inflated variance term. Note that the supremum is at least the value at $r$, so
\begin{equation} \label{eCrit}
\sup_{s \in R, s \geq r} \left ( \lVert \hat h_r - \hat h_s \rVert_{L^2 (P_n)}^2 - \frac{\tau (r + s)}{n^{1/2}} \right ) + \frac{2 (1 + \nu) \tau r}{n^{1/2}} \geq \frac{2 \nu \tau r}{n^{1/2}}.
\end{equation}
The role of the tuning parameter $\nu$ is simply to control this bound. In fact, the parameter $\nu$ only affects the constants and not the rate of convergence in Theorem \ref{tInterLepBound} and Theorem \ref{tInterGaussLepBound}. 
The parameter $\tau$ controls the probability with which our bound on the squared $L^2 (P)$ error of $V \hat h_{\hat r}$ holds.  Lemma \ref{lEstWD} on page \pageref{lEstWD} shows that the estimator $\hat r$ is well-defined by Equation \eqref{eEst}.

It may be that $\hat r$ is not a random variable on $(\Omega, \mathcal{F})$ in some cases, but we assume
\begin{enumerate}[label={($\hat r$)}]
\item \hfil $\hat r$ is a well-defined random variable on $(\Omega, \mathcal{F})$ \label{hr}
\end{enumerate}
throughout. Later, we assume that $R$ is finite, in which case $\hat r$ is certainly a random variable on $(\Omega, \mathcal{F})$. If $\hat r$ is a random variable on $(\Omega, \mathcal{F})$, then $\hat h_{\hat r}$ is a $(H, \bor(H))$-valued measurable function on $(\Omega, \mathcal{F})$ by Lemma \ref{lEst}.

By Lemma \ref{lComp}, the supremum in the definition of $\hat r$ is at most $40 I_\infty (g, r)$ for an appropriate value of $t$. The definition of $\hat r$ then gives us control over the squared $L^2 (P_n)$ norm of $\hat h_{\hat r} - \hat h_r$ when $\hat r \leq r$. When $\hat r \geq r$, we can control the squared $L^2 (P_n)$ norm of $\hat h_{\hat r} - \hat h_r$ using Lemma \ref{lComp}. However, we must control a term of order $\hat r / n^{1/2}$ using \eqref{eCrit} and the definition of $\hat r$. In both cases, this gives a bound on the squared $L^2 (P_n)$ norm of $V \hat h_{\hat r} - V \hat h_r$. Extra terms appear when moving to a bound on the squared $L^2 (P)$ norm of $V \hat h_{\hat r} - V \hat h_r$ using Lemma 24. However, these terms are very similar to the inflated variance term, and can be controlled in the same way. Applying
\begin{equation*}
\lVert V \hat h_{\hat r} - g \rVert_{L^2 (P)}^2 \leq 2 \lVert V \hat h_{\hat r} - V \hat h_r \rVert_{L^2 (P)}^2 + 2 \lVert V \hat h_r - g \rVert_{L^2 (P)}^2
\end{equation*}
leads to Theorem \ref{tLep} (p. \pageref{tLep}). Combining it with Theorem \ref{tBound}
and using the events 
\begin{align*}
A_{1,t} &= \Bigl\{\lVert \hat h_r - h_r \rVert_{L^2 (P_n)}^2 \leq \frac{20 \lVert k \rVert_{\diag}^{1/2} \sigma r t^{1/2}}{n^{1/2}} + 4 \lVert h_r - g \rVert_\infty^2, \text{ for all } r\geq 0\Bigr\}, \\
A_{2,t} &= \Bigl\{\sup_{f_1, f_2 \in r B_H} \left \lvert \lVert V f_1 - V f_2 \rVert_{L^2 (P_n)}^2 - \lVert V f_1 - V f_2 \rVert_{L^2 (P)}^2 \right \rvert \\
&\qquad\qquad\qquad\qquad\enspace \leq \frac{97 \lVert k \rVert_{\diag}^{1/2} C r t^{1/2}}{n^{1/2}} + \frac{8 \lVert k \rVert_{\diag}^{1/2} C r t}{3 n}, \text{ for all } r\geq 0\Bigr\},
\end{align*}
gives us the following main result.

\begin{theorem} \label{tLepBound}
Assume \ref{g1}, \ref{Y}, \ref{H} and \ref{hr}. Let $\tau \geq 80 \lVert k \rVert_{\diag}^{1/2} \sigma$, $\nu > 0$ and
\begin{equation*}
t = \left ( \frac{\tau}{80 \lVert k \rVert_{\diag}^{1/2} \sigma} \right )^2 \geq 1.
\end{equation*}
On the set $A_{1, t} \cap A_{2, t} \in \mathcal{F}$, for which $\prob(A_{1, t} \cap A_{2, t}) \geq 1 - 2 e^{-t}$, we have
\begin{equation*}
\lVert V \hat h_{\hat r} - g \rVert_{L^2 (P)}^2 \leq \inf_{r \in R} \left ( (1 + D_1 \tau n^{-1/2}) (D_2 \tau r n^{-1/2} + D_3 I_\infty (g, r)) \right )
\end{equation*}
for constants $D_1, D_2, D_3 > 0$ not depending on $\tau$, $r$ or $n$.
\end{theorem}

We can obtain rates of convergence for our estimator $V \hat h_{\hat r}$ if we make an assumption about how well $g$ can be approximated by elements of $H$. Such assumptions are typically stated in terms of interpolation spaces between a Banach space $(Z, \lVert \cdot \rVert_Z)$ and a dense subspace $(V, \lVert \cdot \rVert_V)$ (see \cite{bergh2012interpolation}). The $K$-functional of $(Z, V)$ is
\begin{equation*}
K(z, t) = \inf_{v \in V} (\lVert z - v \rVert_Z + t \rVert v \lVert_V)
\end{equation*}
for $z \in Z$ and $t > 0$. We define
\begin{equation*}
\lVert z \rVert_{\beta, q} = \left ( \int_0^\infty ( t^{-\beta} K(z, t) )^q t^{-1} dt \right )^{1/q} \text{ and } \lVert z \rVert_{\beta, \infty} = \sup_{t > 0} (t^{-\beta} K(z, t))
\end{equation*}
for $z \in Z$, $\beta \in (0, 1)$ and $1 \leq q < \infty$. We then define the interpolation space $[Z, V]_{\beta, q}$ to be the set of $z \in Z$ such that $\lVert z \rVert_{\beta, q} < \infty$. The size of $[Z, V]_{\beta, q}$ decreases as $\beta$ increases. The following result is Lemma 1 of \cite{page2017ivanov}, which is essentially Theorem 3.1 of \cite{smale2003estimating}.

\begin{lemma} \label{lApproxInter}
Let $(Z, \lVert \cdot \rVert_Z)$ be a Banach space, $(V, \lVert \cdot \rVert_V)$ be a dense subspace of $Z$ and $z \in [Z, V]_{\beta, \infty}$. We have
\begin{equation*}
\inf \{ \lVert v - z \rVert_Z : v \in V, \lVert v \rVert_V \leq r \} \leq \frac{\lVert z \rVert_{\beta, \infty}^{1/(1-\beta)}}{r^{\beta/(1-\beta)}}.
\end{equation*}
\end{lemma}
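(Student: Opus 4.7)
The plan is to translate the hypothesis $z \in [Z, V]_{\beta, \infty}$ into the pointwise estimate $K(z, t) \leq t^\beta \lVert z \rVert_{\beta, \infty}$ for every $t > 0$ (immediate from the definition $\lVert z \rVert_{\beta, \infty} = \sup_{t > 0} t^{-\beta} K(z, t)$), and then to calibrate $t$ as a function of $r$ so that a near-optimiser in the infimum defining $K(z, t)$ automatically satisfies the constraint $\lVert v \rVert_V \leq r$.

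Concretely, I would fix a slack $\eta > 0$ and, using the infimum definition of $K(z, t)$, pick $v_t \in V$ with
\[
\lVert z - v_t \rVert_Z + t \lVert v_t \rVert_V \leq K(z, t) + \eta t.
\]
The second term on the left then gives $\lVert v_t \rVert_V \leq K(z, t)/t + \eta \leq t^{\beta - 1} \lVert z \rVert_{\beta, \infty} + \eta$.

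I would then choose $t := (\lVert z \rVert_{\beta, \infty}/(r - \eta))^{1/(1-\beta)}$, which enforces $t^{\beta - 1} \lVert z \rVert_{\beta, \infty} = r - \eta$ and hence $\lVert v_t \rVert_V \leq r$, so that $v_t$ is admissible for the constrained infimum. The same estimate bounds the approximation error by
\[
\lVert z - v_t \rVert_Z \leq t^\beta \lVert z \rVert_{\beta, \infty} + \eta t = \frac{\lVert z \rVert_{\beta, \infty}^{1/(1-\beta)}}{(r - \eta)^{\beta/(1-\beta)}} + \eta \left( \frac{\lVert z \rVert_{\beta, \infty}}{r - \eta} \right)^{1/(1-\beta)},
\]
and letting $\eta \downarrow 0$ yields the stated bound (with $\lVert z \rVert_{\beta, q}$ read as $\lVert z \rVert_{\beta, \infty}$, which is the norm controlled by the hypothesis).

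The only delicate point is that the infimum in the definition of $K(z, t)$ need not be attained, which is the reason for the auxiliary slack $\eta$; beyond that, the proof reduces to a one-parameter calibration of $t$ against $r$ via the power-law control $K(z, t) \leq t^\beta \lVert z \rVert_{\beta, \infty}$, so I anticipate no significant obstacle.
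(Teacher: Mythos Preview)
Your argument is correct and is the standard one: from $K(z,t)\le t^\beta\lVert z\rVert_{\beta,\infty}$ you read off simultaneously an approximant with controlled $V$-norm and a bound on its $Z$-error, then calibrate $t$ against $r$ and pass to the limit in the slack $\eta$. The only cosmetic points are that the case $\lVert z\rVert_{\beta,\infty}=0$ (where your choice of $t$ degenerates) should be handled separately, and that the $\lVert z\rVert_{\beta,q}$ in the displayed bound is indeed to be read as $\lVert z\rVert_{\beta,\infty}$, as you note.

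The paper does not give its own proof of this lemma: it is stated with a citation to Lemma~1 of \cite{page2017ivanov}, itself attributed to Theorem~3.1 of \cite{smale2003estimating}. Your proof is precisely the argument underlying that result, so there is nothing further to compare.
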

Results like this have a long history in inverse problem theory \cite{BAU87,HOF05}.
From the above, when $H$ is dense in $C(S)$, we can define the interpolation spaces $[C(S), H]_{\beta, q}$. We set $q = \infty$ and work with the largest space of functions for a fixed $\beta \in (0, 1)$. We are then able to apply the approximation result in Lemma \ref{lApproxInter}.

Let us assume
\begin{enumerate}[label={($g$2)}]
\item \hfil $g \in [C(S), H]_{\beta, \infty}$ with norm at most $B$ for $\beta \in (0, 1)$ and $B > 0$. \label{g2}
\end{enumerate}

The assumption \ref{g2}, together with Lemma \ref{lApproxInter}, give
\begin{equation}
I_\infty (g, r) \leq \frac{B^{2/(1-\beta)}}{r^{ 2 \beta/(1-\beta)}} \label{eApproxInter}
\end{equation}
for $r > 0$. In order for us to apply Theorem \ref{tLepBound} to this setting, we need to make an assumption on $R$. We assume either
\begin{enumerate}[label={($R$1)}]
\item \hfil $R = [0, \infty)$ \label{R1}
\end{enumerate}
or
\begin{enumerate}[label={($R$2)}]
\item \hfil $R = \{ b i : 0 \leq i \leq I - 1 \} \cup \{ a n^{1/2} \}$ and $\rho = a n^{1/2}$ for $a, b > 0$ and $I = \lceil a n^{1/2} / b \rceil$. \label{R2}
\end{enumerate}
The assumption \ref{R1} is mainly of theoretical interest and would make it difficult to calculate $\hat r$ in practice. The estimator $\hat r$ can be computed under the assumption \ref{R2}, since in this case $R$ is finite. We obtain a high-probability bound on a fixed quantile of the squared $L^2 (P)$ error of $V \hat h_{\hat r}$ of order $t^{1/2} n^{-\beta/(1+\beta)}$ with probability at least $1 - e^{- t}$ when $\tau$ is an appropriate multiple of $t^{1/2}$.

\begin{corollary} \label{tInterLepBound}
Assume \ref{g1}, \ref{g2}, \ref{Y} and \ref{H}. Let $\tau \geq 80 \lVert k \rVert_{\diag}^{1/2} \sigma$, $\nu > 0$ and
\begin{equation*}
t = \left ( \frac{\tau}{80 \lVert k \rVert_{\diag}^{1/2} \sigma} \right )^2 \geq 1.
\end{equation*}
Also assume \ref{R1} and \ref{hr}, or \ref{R2}.  On the set $A_{1, t} \cap A_{2, t} \in \mathcal{F}$, for which $\prob(A_{1, t} \cap A_{2, t}) \geq 1 - 2 e^{-t}$, we have
\begin{equation*}
\lVert V \hat h_{\hat r} -  g \rVert_{L^2 (P)}^2 \leq D_1 \tau n^{-\beta/(1+\beta)} + D_2 \tau^2 n^{-(1+3\beta)/(2(1+\beta))}
\end{equation*}
for constants $D_1, D_2 > 0$ not depending on $n$ or $\tau$.
\end{corollary}
From the high probability bound we obtain a bound on the expected squared $L^2(P)$ error.
\begin{corollary} \label{InterLepExpBound}
Assume \ref{g1}, \ref{g2}, \ref{Y}, \ref{H} and either \ref{R1} and \ref{hr}, or \ref{R2}. We have 
\[
\expect(\lVert V \hat h_{\hat r} -  g \rVert_{L^2 (P)}^2) \leq D n^{-\beta/(1+\beta)}
\]
for constant $D$ not depending on $n$.
\end{corollary}

\section{Regression for a Collection of RKHSs}

In this section, we again provide simultaneous bounds on our collection of non-adaptive estimators. Our results still depend on how well the regression function $g$ can be approximated by elements of an RKHS. However, this RKHS now comes from a collection instead of being fixed. Let $\mathcal{K}$ be a set of kernels on $S \times S$. We make the following assumptions.
\begin{enumerate}[label={($\mathcal{K}1$)}]
\item {The covariate set $S$ and the set of kernels $\mathcal{K}$ have the following properties:
\begin{itemize}
\item The covariate set $S$ is a separable topological space.
\item The set of kernels $(\mathcal{K}, \lVert \cdot \rVert_\infty)$ is separable.
\item The kernel $k$ is bounded for all $k \in \mathcal{K}$.
\item The kernel $k$ is continuous for all $k \in \mathcal{K}$.
\end{itemize}} \label{K1}
\end{enumerate}
Since $(\mathcal{K}, \lVert \cdot \rVert_\infty)$ is a separable set of kernels, we have that $\mathcal{K}$ has a countable dense subset $\mathcal{K}_0$. For all $\eps > 0$ and all $k \in \mathcal{K}$, there exists $k_0 \in \mathcal{K}_0$ such that
\begin{equation*}
\lVert k_0 - k \rVert_\infty = \sup_{x_1, x_2 \in S} \lvert k_0 (x_1, x_2) - k(x_1, x_2) \rvert < \eps.
\end{equation*}
Let $H_k$ be the RKHS with kernel $k$ for $k \in \mathcal{K}$. Since $k$ is continuous and $S$ is a separable topological space, we have that $H_k$ is separable by Lemma 4.33 of \cite{steinwart2008support}. Hence, the assumption \ref{H} holds for $H_k$. We use the notation $\lVert \cdot \rVert_k$ and $\langle \cdot, \cdot \rangle_k$ for the norm and inner product of $H_k$.

Let $B_k$ be the closed unit ball of $H_k$ for $k \in \mathcal{K}$ and $r > 0$. We define the estimator
\begin{equation*}
\hat h_{k, r} = \argmin_{f \in r B_k} \frac{1}{n} \sum_{i = 1}^n (f(X_i) - Y_i)^2
\end{equation*}
of the regression function $g$. We make this definition unique by demanding that $\hat h_{k, r} \in \spn \{ k_{X_i} : 1 \leq i \leq n \}$ (see Lemma 3 of \cite{page2017ivanov}). We also define $\hat h_{k, 0} = 0$. Since we assume \ref{g1}, that $g$ is bounded in $[- C, C]$, we can make $\hat h_{k, r}$ closer to $g$ by clipping it to obtain $V \hat h_{k, r}$. Lemma 26 (\cite{page2018supplement}, p. 6) shows that $\hat h_{k,r}$ is a well-defined estimator.

Let
\begin{equation*}
\mathcal{L} = \{ k / \lVert k \rVert_{\diag} : k \in \mathcal{K} \} \cup \{0\}
\end{equation*}
and
\begin{equation*}
D = \sup_{f_1, f_2 \in \mathcal{L}} \lVert f_1 - f_2 \rVert_\infty \leq 2.
\end{equation*}
We include 0 in the definition of $\mathcal{L}$ so that, when analysing stochastic processes over $\mathcal{L}$ using chaining, we can start all chains at 0. Note that $(\mathcal{L}, \lVert \cdot \rVert_\infty)$ is separable since $\mathcal{L} \setminus \{0\}$ is the image of a continuous function on $(\mathcal{K}, \lVert \cdot \rVert_\infty)$, which is itself separable. Let $N(a, M, d)$ be the minimum size of an $a > 0$ cover of a metric space $(M, d)$. The following quantity is key to quantifying the complexity of our collection of kernels, 
\begin{equation*}
J = \left ( 162 \int_{0}^{D/2} \log(2 N(a, \mathcal{L}, \lVert \cdot \rVert_\infty)) da + 1 \right )^{1/2}.
\end{equation*}
Using $J$ we upper bound  the empirical $L^2$-distance between the estimator $\hat h_{k,r}$ and $h_{k,r}$ in Lemma 29 (\cite{page2018supplement}, p. 10). 
Instead of one quadratic form of subgaussians as in Lemma 31, we obtain a supremum over $\mathcal{K}$ of quadratic forms of subgaussians. This can be controlled by chaining using Lemma 28 of \cite{page2018supplement}.

It is again useful to be able to transfer a bound on the squared $L^2 (P_n)$ error of an estimator to a bound on the squared $L^2 (P)$ error of the estimator. Lemma 31 (\cite{page2018supplement}, p. 12) allows us to do such a transfer.
The result is proved using the same method as Lemma 24, 
although we  obtain a supremum of quadratic forms of subgaussians which are controlled using chaining. The event in the result is measurable by Lemma 30 of \cite{page2018supplement}.

To capture how well $g$ can be approximated by elements of $H_k$, we define
\begin{equation*}
I_\infty (g, k, r) = \inf \left \{ \lVert h_{k, r} - g \rVert_\infty^2 : h_{k, r} \in r B_k \right \}
\end{equation*}
for $k \in \mathcal{K}$ and $r \geq 0$. We obtain a bound on the squared $L^2 (P)$ error of $V \hat h_{k, r}$ by combining Lemmas 29 and 31.

\begin{theorem} \label{tVaryBound}
Assume \ref{g1}, \ref{Y} and \ref{K1}. Let $t \geq 1$ and recall the definitions of $A_{3, t}$ and $A_{4, t}$ from Lemmas 29 and 31. On the set $A_{3, t} \cap A_{4, t} \in \mathcal{F}$, for which $\prob(A_{3, t} \cap A_{4, t}) \geq 1 - 2 e^{-t}$, we have
\begin{equation*}
\lVert V \hat h_{k, r} - g \rVert_{L^2 (P)}^2 \leq \frac{2 J \lVert k \rVert_{\diag}^{1/2} (151 C + 21 \sigma) r t^{1/2}}{n^{1/2}} + \frac{16 \lVert k \rVert_{\diag}^{1/2} C r t}{3 n} + 10 I_\infty (g, k, r)
\end{equation*}
simultaneously for all $k \in \mathcal{K}$ and all $r \geq 0$.
\end{theorem}

\subsection{The Goldenshluger--Lepski Method for a Collection of RKHSs with Gaussian Kernels}

We now apply the Goldenshluger--Lepski method again in the context of RKHS regression. However, we now produce an estimator which adapts over a collection of RKHSs with Gaussian kernels. We make the following assumptions on $S$ and $\mathcal{K}$.
\begin{enumerate}[label={($\mathcal{K}2$)}]
\item {The covariate set $S$ and the set of kernels $\mathcal{K}$ have the following properties:
\begin{itemize}
\item The covariate set $S \subs \real^d$ for $d \geq 1$.
\item {The set of kernels
\begin{equation*}
\mathcal{K} = \left \{ k_\gamma (x_1, x_2) = \gamma^{-d} \exp \left (- \lVert x_1 - x_2 \rVert_2^2 / \gamma^2 \right ) : \gamma \in \Gamma \text{ and } x_1, x_2 \in S \right \}
\end{equation*}
for $\Gamma \subs [u, v]$ non-empty for $v \geq u > 0$.}
\end{itemize}} \label{K2}
\end{enumerate}
Recalling the definitions from the previous section, we have
\begin{equation*}
\mathcal{L} = \left \{ f_\gamma (x_1, x_2) = \exp \left (- \lVert x_1 - x_2 \rVert_2^2 / \gamma^2 \right ) : \gamma \in \Gamma \text{ and } x_1, x_2 \in S \right \} \cup \{0\}.
\end{equation*}
The assumption \ref{K2} implies the assumption \ref{K1}. This is because Lemma 33 of \cite{page2018supplement} shows that $(\mathcal{L}, \lVert \cdot \rVert_\infty)$, and hence $(\mathcal{K}, \lVert \cdot \rVert_\infty)$, is separable. We change notation slightly. Let $H_\gamma$ be the RKHS with kernel $k_\gamma$ for $\gamma \in \Gamma$, let $\lVert \cdot \rVert_\gamma$ and $\langle \cdot, \cdot \rangle_\gamma$ be the norm and inner product of $H_\gamma$, and let $B_\gamma$ be the closed unit ball of $H_\gamma$. Furthermore, we write $\hat h_{\gamma, r}$ in place of $\hat h_{k_\gamma, r}$ and $I_\infty (g, \gamma, r)$ in place of $I_\infty (g, k_\gamma, r)$.

The scaling of the kernels is selected so that the following lemma holds. The result is immediate from Proposition 4.46 of \cite{steinwart2008support} and the way that the norm of an RKHS scales with its kernel (Theorem 4.21 of \cite{steinwart2008support}).

\begin{lemma} \label{lNest}
Assume \ref{K2}. Let $\gamma, \eta \in \Gamma$ with $\gamma \geq \eta$. We have $B_\gamma \subs B_\eta$.
\end{lemma}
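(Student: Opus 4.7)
The plan is to reduce Lemma \ref{lNest} to two black-box results from \cite{steinwart2008support}, exactly as hinted in the paragraph preceding the statement. First I would introduce the unscaled Gaussian kernel $\tilde k_\gamma(x_1, x_2) = \exp(-\lVert x_1 - x_2 \rVert_2^2 / \gamma^2)$ on $S$, and write $\tilde H_\gamma$ for its RKHS with norm $\lVert \cdot \rVert_{\tilde H_\gamma}$. Note $k_\gamma = \gamma^{-d} \tilde k_\gamma$, so the only thing separating our kernel family from the one treated in Steinwart--Christmann is a known multiplicative factor.

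Next I would invoke Proposition 4.46 of \cite{steinwart2008support} to get the nesting of the \emph{unscaled} Gaussian RKHSs: because $\gamma \geq \eta$, we have $\tilde H_\gamma \subseteq \tilde H_\eta$ with the norm inequality $\lVert f \rVert_{\tilde H_\eta} \leq (\gamma/\eta)^{d/2} \lVert f \rVert_{\tilde H_\gamma}$ for every $f \in \tilde H_\gamma$. In parallel, I would invoke Theorem 4.21 of \cite{steinwart2008support}, which describes how the RKHS norm transforms when the kernel is multiplied by a positive scalar: the set $H_\gamma$ coincides with $\tilde H_\gamma$, and $\lVert f \rVert_\gamma = \gamma^{d/2} \lVert f \rVert_{\tilde H_\gamma}$. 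The same identity holds with $\gamma$ replaced by $\eta$.

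Now pick $f \in B_\gamma$, convert to the unscaled norm using Theorem 4.21 to obtain $\lVert f \rVert_{\tilde H_\gamma} \leq \gamma^{-d/2}$, push through the inclusion using Proposition 4.46 to obtain $\lVert f \rVert_{\tilde H_\eta} \leq (\gamma/\eta)^{d/2} \gamma^{-d/2} = \eta^{-d/2}$, and finally convert back using Theorem 4.21 again to obtain $\lVert f \rVert_\eta \leq \eta^{d/2} \cdot \eta^{-d/2} = 1$, i.e.\ $f \in B_\eta$. The chain telescopes precisely because the scaling factor $\gamma^{-d}$ built into $k_\gamma$ in assumption \ref{K2} was chosen to cancel the $(\gamma/\eta)^{d/2}$ distortion produced by Proposition 4.46.

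There is no real obstacle here; the only thing that needs care is bookkeeping the exponents in the norm-scaling identity so that the $(\gamma/\eta)^{d/2}$ constant from Steinwart--Christmann is exactly absorbed by the two factors of $\gamma^{d/2}$ and $\eta^{-d/2}$ coming from Theorem 4.21. This is precisely the reason the authors chose the normalisation $k_\gamma(x_1,x_2) = \gamma^{-d} \exp(-\lVert x_1 - x_2 \rVert_2^2 / \gamma^2)$ in \ref{K2}, and why the paper describes the result as immediate.
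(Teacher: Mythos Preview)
Your proposal is correct and follows exactly the approach the paper indicates: the paper simply states that the result is immediate from Proposition 4.46 of \cite{steinwart2008support} together with the kernel-scaling identity in Theorem 4.21 of \cite{steinwart2008support}, and you have filled in the bookkeeping showing how the $(\gamma/\eta)^{d/2}$ factor from Proposition 4.46 is absorbed by the $\gamma^{-d}$ normalisation in \ref{K2}.
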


By Lemma 33 of \cite{page2018supplement}, the function $F : \Gamma \to \mathcal{L} \setminus \{0\}$ by $F(\gamma) = f_\gamma$ is continuous. Hence, the function $G : \Gamma \to \mathcal{K}$ by $G(\gamma) = k_\gamma$ is continuous. Lemma 34 (\cite{page2018supplement}, p. 17), which follows directly from Lemma 26,  now states that the estimator is well-defined.

Recall the definition of $J$ from the previous section. Lemma 36 (\cite{page2018supplement}, p. 17) provides us with a bound on $J$. Furthermore,
Lemma 15 on page 23 can be used to define the majorant of the non-adaptive estimators and is a simple consequence of Lemma 29. This motivates the definition of the adaptive estimator used in the Goldenshluger--Lepski method.

Let $R \subs [0, \infty)$ be non-empty. The Goldenshluger--Lepski method creates an adaptive estimator by defining $(\hat \gamma, \hat r)$ to be the minimiser of
\begin{equation}
\sup_{\eta \in \Gamma, \eta \leq \gamma} \sup_{s \in R, s \geq r} \left ( \lVert \hat h_{\gamma, r} - \hat h_{\eta, s} \rVert_{L_2 (P_n)}^2 - \frac{\tau (\gamma^{-d/2} r + \eta^{-d/2} s)}{n^{1/2}} \right ) + \frac{2 (1 + \nu) \tau \gamma^{-d/2} r}{n^{1/2}} \label{eGaussEst}
\end{equation}
over $(\gamma, r) \in \Gamma \times R$ for tuning parameters $\tau, \nu > 0$. Again, the supremum of pairwise comparisons can be viewed as a proxy for the unknown bias, while the other term is an inflated variance term. Note that the supremum is at least the value at $(\gamma, r)$, which means the above is at least
\begin{equation} \label{eGaussCrit}
\frac{2 \nu \tau \gamma^{-d/2} r}{n^{1/2}}.
\end{equation}
Again, the role of the tuning parameter $\nu$ is simply to control this bound. The parameter $\tau$ controls the probability with which our bound on the squared $L^2 (P)$ error of $V \hat h_{\hat \gamma, \hat r}$ holds. It may be that $\hat \gamma$ is not a well-defined random variable on $(\Omega, \mathcal{F})$ in some cases, but we assume
\begin{enumerate}[label={($\hat \gamma$)}]
\item \hfil $\hat \gamma$ is a well-defined random variable on $(\Omega, \mathcal{F})$ \label{hgamma}
\end{enumerate}
throughout. Later, we assume that $R$ and $\Gamma$ are finite, in which case $\hat \gamma$ and $\hat r$ are certainly well-defined random variables on $(\Omega, \mathcal{F})$. If $\hat \gamma$ and $\hat r$ are well-defined random variables on $(\Omega, \mathcal{F})$, then $\hat h_{\hat \gamma, \hat r}$ is an $(C(S), \bor(C(S)))$-valued measurable function on $(\Omega, \mathcal{F})$ by Lemma 34.

By Lemma \ref{lGaussComp}, the supremum in the definition of $(\hat \gamma, \hat r)$ is at most $40 I_\infty (g, \gamma, r)$ for an appropriate value of $t$. The definition of $(\hat \gamma, \hat r)$ then gives us control over the squared $L^2 (P_n)$ norm of $\hat h_{\hat \gamma, \hat r} - \hat h_{\hat \gamma \wedge \gamma, \hat r \vee r}$. We can control the squared $L^2 (P_n)$ norm of $\hat h_{\hat \gamma \wedge \gamma, \hat r \vee r} - h_{\gamma, r}$ using Lemma \ref{lGaussComp}. In both cases, we use the boundedness of $\Gamma$ when controlling the squared $L^2 (P_n)$ norm before clipping the estimators using $V$. Extra terms appear when moving from bounds on the squared $L^2 (P_n)$ norm to bounds on the squared $L^2 (P)$ norm using Lemma 31. We must then control terms of order $\hat \gamma^{-d/2} \hat r / n^{1/2}$ using \eqref{eGaussCrit} and the definition of $(\hat \gamma, \hat r)$. Combining the bounds gives a bound on the squared $L^2 (P)$ norm of $V h_{\hat \gamma, \hat r} - V h_{\gamma, r}$. Applying
\begin{equation*}
\lVert V \hat h_{\hat r} - g \rVert_{L^2 (P)}^2 \leq 2 \lVert V h_{\hat \gamma, \hat r} - V h_{\gamma, r} \rVert_{L^2 (P)}^2 + 2 \lVert V h_{\gamma, r} - g \rVert_{L^2 (P)}^2
\end{equation*}
leads to Theorem \ref{tGaussLep} on page \pageref{tGaussLep}. Comparisons between $(\hat r, \hat \gamma)$, $(r, \gamma)$ and $(\hat \gamma \wedge \gamma, \hat r \vee r)$ are demonstrated in Figure \ref{fParComp} for two different values of $(r, \gamma)$.

\usetikzlibrary{shapes}
\begin{figure}[h]
\begin{center}
\begin{tikzpicture}
\begin{axis}[axis lines=middle, xmax=20, xmin=0, ymax=10, ymin=0, xlabel={\large $r$}, ylabel={\large $\gamma$}, x=0.5cm, y=0.5cm, xmajorticks=false, ytick={2,8}, yticklabels={\large $u$,\large $v$}, axis line style=thick,every tick/.style={thick}]
\node[label={0:{$r$}}] at (axis cs:0,20) {};
\node[label={0:{$(\hat r, \hat \gamma)$}}, circle, fill, inner sep=1.5pt] at (axis cs:8,5) {};
\node[label={0:{$(r_1, \gamma_1)$}}, rectangle, fill, inner sep=2pt] at (axis cs:8/3,3) {};
\node[label={0:{$(r_2, \gamma_2)$}}, diamond, fill, inner sep=1.5pt] at (axis cs:40/3,7) {};
\node[label={0:{$(\hat r \vee r_1, \hat \gamma \wedge \gamma_1)$}}, rectangle, fill, inner sep=2pt] at (axis cs:8,3) {};
\node[label={0:{$(\hat r \vee r_2, \hat \gamma \wedge \gamma_2)$}}, diamond, fill, inner sep=1.5pt] at (axis cs:40/3,5) {};
\end{axis}
\end{tikzpicture}
\end{center}
\caption{A demonstration of the parameter comparisons made in the proof of Theorem \ref{tGaussLep}}
\label{fParComp}
\end{figure}
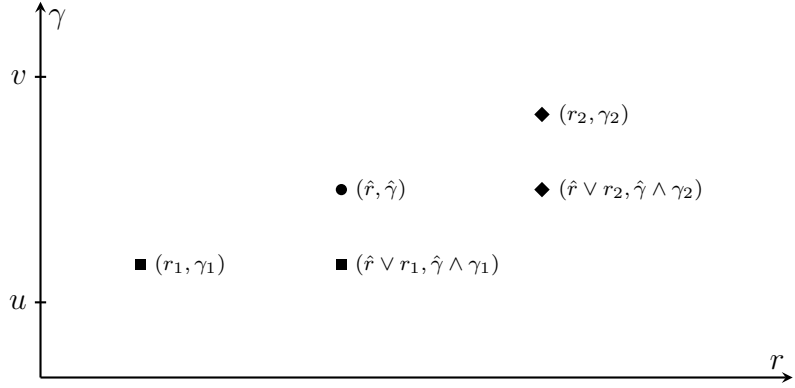

We now combine Theorems \ref{tVaryBound} and \ref{tGaussLep}. In the following,
let $A_{3, t} \in \mathcal{F}$ be a event such that $\prob(A_{3, t}) \geq 1 - e^{-t}$ and on which
\begin{equation*}
\lVert \hat h_{k, r} - h_{k, r} \rVert_{L^2 (P_n)}^2 \leq \frac{21 J \lVert k \rVert_{\diag}^{1/2} \sigma r t^{1/2}}{n^{1/2}} + 4 \lVert h_{k, r} - g \rVert_\infty^2
\end{equation*}
simultaneously for all $k \in \mathcal{K}$, all $r \geq 0$ and all $h_{k, r} \in r B_k$. By Lemma 29 in \cite{page2018supplement} such an $A_{3,t}$ exists. Furthermore, let
\begin{align*}
A_{4,t} = \Bigl\{\sup_{f_1, f_2 \in r B_k} &\left \lvert \lVert V f_1 - V f_2 \rVert_{L^2 (P_n)}^2 - \lVert V f_1 - V f_2 \rVert_{L^2 (P)}^2 \right \rvert \\ 
&\leq \frac{151 J \lVert k \rVert_{\diag}^{1/2} C r t^{1/2}}{n^{1/2}} + \frac{8 \lVert k \rVert_{\diag}^{1/2} C r t}{3 n}, \text{  for all } k \in \mathcal{K} \text{ and all } r \geq 0
\Bigr\}.
\end{align*}
With these definition in place the following holds.

\begin{theorem} \label{tGaussLepBound}
Assume \ref{g1}, \ref{Y} and \ref{K2}. Let $\tau \geq 84 J \sigma$, $\nu > 0$ and
\begin{equation*}
t = \left ( \frac{\tau}{84 J \sigma} \right )^2 \geq 1.
\end{equation*}
On the set $A_{3, t} \cap A_{4, t} \in \mathcal{F}$, for which $\prob(A_{3, t} \cap A_{4, t}) \geq 1 - 2 e^{-t}$, we have
\begin{equation*}
\lVert V \hat h_{\gamma, \hat r} - g \rVert_{L^2 (P)}^2 \leq \inf_{\gamma \in \Gamma} \inf_{r \in R} \left ( (1 + D_1 \tau n^{-1/2}) (D_2 \tau \gamma^{-d/2} r n^{-1/2} + D_3 I_\infty (g, \gamma, r)) \right )
\end{equation*}
for constants $D_1, D_2, D_3 > 0$ not depending on $\tau$, $\gamma$, $r$ or $n$.
\end{theorem}

We can obtain rates of convergence for our estimator $V \hat h_{\hat \gamma, \hat r}$ if we make an assumption about how well $g$ can be approximated by elements of $H_\alpha$ for $\alpha \in [u, v]$. Let us assume
\begin{enumerate}[label={($g$3)}]
\item \hfil $g \in [C(S), H_\alpha]_{\beta, \infty}$ with norm at most $B$ for $\alpha \in [u, v]$, $\beta \in (0, 1)$ and $B > 0$. \label{g3}
\end{enumerate}
The assumption \ref{g3}, together with Lemma \ref{lApproxInter}, give
\begin{equation}
I_\infty (g, \alpha, r) \leq \frac{B^{2/(1-\beta)}}{r^{ 2 \beta/(1-\beta)}} \label{eGaussApproxInter}
\end{equation}
for $r > 0$. In order for us to apply Theorem \ref{tGaussLepBound} to this setting, we need to make assumptions on $\Gamma$ and $R$. We assume either \ref{R1} and
\begin{enumerate}[label={($\Gamma$1)}]
\item \hfil $\Gamma = [u, v]$, \label{Gamma1}
\end{enumerate}
or \ref{R2} and
\begin{enumerate}[label={($\Gamma$2)}]
\item \hfil $\Gamma = \{ u c^i : 0 \leq i \leq L - 1 \} \cup \{v\}$ for $c > 1$ and $L = \lceil \log(v/u)/\log(c) \rceil$. \label{Gamma2}
\end{enumerate}
The assumptions \ref{R1} and \ref{Gamma1} are mainly of theoretical interest and would make it difficult to calculate $(\hat \gamma, \hat r)$ in practice. The estimator $(\hat \gamma, \hat r)$ can be computed under the assumptions \ref{R2} and \ref{Gamma2}, since in this case $R$ and $\Gamma$ are finite. We obtain a high-probability bound on a fixed quantile of the squared $L^2 (P)$ error of $V \hat h_{\hat r, \hat \gamma}$ of order $t^{1/2} n^{-\beta/(1+\beta)}$ with probability at least $1 - e^{- t}$ when $\tau$ is an appropriate multiple of $t^{1/2}$.

\begin{corollary} \label{tInterGaussLepBound}
Assume \ref{g1}, \ref{g3}, \ref{Y} and \ref{K2}. Let $\tau \geq 84 J \sigma$, $\nu > 0$ and
\begin{equation*}
t = \left ( \frac{\tau}{84 J \sigma} \right )^2 \geq 1.
\end{equation*}
Also assume \ref{R1}, \ref{Gamma1}, \ref{hr} and \ref{hgamma}, or \ref{R2} and \ref{Gamma2}.  On the set $A_{3, t} \cap A_{4, t} \in \mathcal{F}$, for which $\prob(A_{3, t} \cap A_{4, t}) \geq 1 - 2 e^{-t}$, we have
\begin{equation*}
\lVert V \hat h_{\hat \gamma, \hat r} -  g \rVert_{L^2 (P)}^2 \leq D_1 \tau n^{-\beta/(1+\beta)} + D_2 \tau^2 n^{-(1+3\beta)/(2(1+\beta))}
\end{equation*}
for constants $D_1, D_2 > 0$ not depending on $n$ or $\tau$.
\end{corollary}
From the high probability bound we obtain a bound on the expected squared $L^2(P)$ error similarly to 
Corollary \ref{InterLepExpBound}.
\begin{corollary} \label{InterGaussLepExpBound}
Assume \ref{g1}, \ref{g3}, \ref{Y}, \ref{K2} and either \ref{R1}, \ref{Gamma1}, \ref{hr} and \ref{hgamma}, or \ref{R2} and \ref{Gamma2}. We have 
\[
\expect(\lVert V \hat h_{\hat \gamma, \hat r} -  g \rVert_{L^2 (P)}^2) \leq D n^{-\beta/(1+\beta)}
\]
for constant $D$ not depending on $n$.
\end{corollary}

\section{Discussion}

In this paper, we show how the Goldenshluger--Lepski method can be applied when performing regression over an RKHS $H$, which is separable with a bounded and measurable kernel $k$, or a collection of such RKHSs. We produce an adaptive estimator from a collection of clipped versions of least-squares estimators which are constrained to lie in a ball of predefined radius in $H$. Since the $L^2 (P)$ norm is unknown, we use the $L^2 (P_n)$ norm when calculating the pairwise comparisons for the proxy for the unknown bias of this collection of non-adaptive estimators. When $H$ is fixed, our estimator need only adapt to the radius of the ball in $H$. However, when $H$ comes from a collection of RKHSs with Gaussian kernels, the estimator must also adapt to the width parameter of the kernel. As far as we are aware, this is the first time that the Goldenshluger--Lepski method has been applied in the context of RKHS regression. In order to apply the Goldenshluger--Lepski method in this context, we must provide a majorant by controlling all of the non-adaptive estimators simultaneously, extending the results of \cite{page2017ivanov}.

By assuming that the regression function lies in an interpolation space between $C(S)$ and $H$ parametrised by $\beta$, we obtain a bound on a fixed quantile of the squared $L^2 (P)$ error of our adaptive estimator of order $n^{- \beta/(1+\beta)}$. This is true for both the case in which $H$ is fixed and the case in which $H$ comes from a collection of RKHSs with Gaussian kernels. The order $n^{- \beta/(1+\beta)}$ for the squared $L^2 (P)$ error of the adaptive estimators matches the order of the smallest bounds obtained in \cite{page2017ivanov} for the squared $L^2 (P)$ error of the non-adaptive estimators. In the sense discussed in \cite{page2017ivanov}, this order is the optimal power of $n$ if we make the slightly weaker assumption that the regression function is an element of the interpolation space between $L^2 (P)$ and $H$ parametrised by $\beta$.

For the case in which $H$ comes from a collection of RKHSs with Gaussian kernels, our current results rely on the boundedness of the set $\Gamma$ of width parameters of the kernels. This is somewhat limiting as allowing the width parameter to tend to 0 as $n$ tends to infinity would allow us to estimate a greater collection of functions. We hope that in the future the analysis in the proof of Theorem \ref{tGaussLep} can be extended to allow for such flexibility.

The results in this paper warrant the investigation of whether it is possible to extend the use of the Goldenshluger--Lepski method from the case in which $H$ comes from a collection of RKHSs with Gaussian kernels to other cases. The analysis in this paper relies on the fact that the closed unit ball of the RKHS generated by a Gaussian kernel increases as the width of the kernel decreases. It may be possible to apply a similar analysis to other situations in which $H$ belongs to a collection of RKHSs which also exhibit this nestedness property. If the RKHSs did not exhibit this property, then a new form of analysis would be necessary to apply the Goldenshluger--Lepski method. In particular, we would need a new criterion for deciding on the smoothness of the non-adaptive estimators when performing the pairwise comparisons.

\appendix

\section{Proof of the Goldenshluger--Lepski Method for a Fixed RKHS} \label{sec:app_fixed}
This section is composed of two subsections: in Subsection \ref{sec:single_app_aux} we derive a number of auxiliary results and in Subsection \ref{sec:single_app_main} we provide proofs for our main results.

\subsection{Auxiliary Results} \label{sec:single_app_aux}
The following combines parts of Lemmas 3 and 32 of \cite{page2017ivanov}.

\begin{lemma} \label{lEst}
Assume \ref{H}. We have that $\hat h_r$ is a $(H, \bor(H))$-valued measurable function on $(\Omega \times [0, \infty), \mathcal{F} \otimes \bor([0, \infty)))$, where $r$ varies in $[0, \infty)$. Furthermore, $\lVert \hat h_r - \hat h_s \rVert_H^2 \leq \lvert r^2 - s^2 \rvert$ for $r, s \in [0, \infty)$.
\end{lemma}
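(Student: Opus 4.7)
The plan is to prove the two assertions of Lemma \ref{lEst} in turn, both by exploiting the representer theorem to reduce to a finite-dimensional convex programming problem on $H_n := \spn\{k_{X_i} : 1 \leq i \leq n\}$. Writing $f \in H_n$ as $f = \sum_{i=1}^n \alpha_i k_{X_i}$ with $\alpha \in \real^n$, the empirical loss becomes $L(f) = \|K\alpha - Y\|^2/n$ and the constraint $\|f\|_H^2 \leq r^2$ becomes $\alpha^\trans K \alpha \leq r^2$, where $K_{ij} = k(X_i, X_j)$. Standard convex analysis shows that the minimiser $\hat h_r$ is unique as an element of $H$, even though the coefficient vector $\alpha$ may not be, which is what makes the convention $\hat h_r \in H_n$ well-posed.

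I would prove the Lipschitz-type bound first, since it makes the measurability argument cleaner. Assume without loss of generality that $r \leq s$; the case $r = 0$ is immediate from $\hat h_0 = 0$ and $\|\hat h_s\|_H \leq s$. The key idea is to pass through the Lagrangian dual: for each $r > 0$ there exists a multiplier $\lambda_r \geq 0$ with $\hat h_r = \argmin_{f \in H_n} \{L(f) + \lambda_r \|f\|_H^2\}$. The regularised objective is $2\lambda_r$-strongly convex in the RKHS norm, so
\begin{equation*}
L(\hat h_s) + \lambda_r \|\hat h_s\|_H^2 \geq L(\hat h_r) + \lambda_r \|\hat h_r\|_H^2 + \lambda_r \|\hat h_r - \hat h_s\|_H^2,
\end{equation*}
together with the analogous inequality obtained by interchanging $r$ and $s$. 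Adding the two inequalities and cancelling the $L$-terms yields
\begin{equation*}
(\lambda_r - \lambda_s) \bigl( \|\hat h_s\|_H^2 - \|\hat h_r\|_H^2 \bigr) \geq (\lambda_r + \lambda_s) \|\hat h_r - \hat h_s\|_H^2.
\end{equation*}
When both constraints are active we have $\|\hat h_r\|_H^2 = r^2$ and $\|\hat h_s\|_H^2 = s^2$, and since smaller balls require more regularisation $\lambda_r \geq \lambda_s$, giving $\|\hat h_r - \hat h_s\|_H^2 \leq s^2 - r^2$ after dividing by $\lambda_r + \lambda_s$. The two remaining cases ($\lambda_s = 0$ with $\lambda_r > 0$, or both multipliers zero) are either a direct specialisation of the same inequality via $\|\hat h_s\|_H^2 - r^2 \leq s^2 - r^2$, or follow from $\hat h_r = \hat h_s$.

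For the joint measurability, for each fixed $r$ the map $\omega \mapsto \hat h_r$ is $(H, \bor(H))$-measurable; this is the content of Lemma 3 of \cite{page2017ivanov} and follows from a measurable selection argument applied to the unique minimiser of a jointly measurable, convex, coercive function (with separability of $H$ guaranteed by \ref{H}). Combining sectionwise measurability in $\omega$ with continuity of $r \mapsto \hat h_r$ in the RKHS norm, which is a direct consequence of the bound just proved, yields joint measurability on $\mathcal{F} \otimes \bor([0, \infty))$ via the classical result that Carath\'{e}odory maps into separable metric spaces are measurable on the product $\sa$-algebra.

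The main obstacle is the Lagrangian case analysis around the transition radius where the constraint first becomes active, together with verifying that the multipliers $\lambda_r$ can be chosen so that $r \mapsto \lambda_r$ is monotone. Both are routine once the strong-convexity chain above is in place, but each requires separate handling, as does the boundary case $r = 0$ where the convention $\hat h_0 = 0$ is used rather than any Lagrangian representation.
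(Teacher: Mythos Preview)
Your proposal is correct and the Lagrangian strong-convexity argument for the Lipschitz bound goes through as written; the monotonicity $\lambda_r \geq \lambda_s$ for $r \leq s$ in the active case actually falls out of the very inequality you derive, so no separate verification is needed. The degenerate case $\lambda_r = \lambda_s = 0$ does rely on the specific convention for $\hat h_r$ from Lemma~3 of \cite{page2017ivanov} (the minimum-norm solution given by the pseudo-inverse), since the unconstrained empirical minimiser in $H_n$ is not unique in general, but with that convention $\hat h_r$ is constant for $r \geq \rho$ and the case is trivial.

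The paper itself does not prove the lemma but cites Lemmas~3 and~32 of \cite{page2017ivanov}. The analogous result for varying kernels (Lemma~\ref{lVaryEst}) is proved in the appendix, and from it one can infer the intended route: diagonalise $K = A D A^{\trans}$, define the multiplier $\mu(r)$ implicitly via $\sum_{i \leq m} D_{i,i}(D_{i,i} + n\mu(r))^{-2}(A^{\trans}Y)_i^2 = r^2$, and write $\hat h_r$ explicitly as $\sum_i a_i k_{X_i}$ with $(A^{\trans}a)_i = (D_{i,i} + n\mu(r))^{-1}(A^{\trans}Y)_i$. Joint measurability then follows because every ingredient of the formula is measurable, in particular $\mu(r)$ via its sublevel sets. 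Your route is different: you first establish continuity of $r \mapsto \hat h_r$ via the abstract convex argument, then invoke Carath\'eodory with the sectionwise measurability already supplied by Lemma~3 of \cite{page2017ivanov}. This is cleaner in that it avoids the eigendecomposition machinery entirely and uses only the convex structure, at the cost of the case analysis you flag; the paper's explicit-formula approach is more concrete and makes both measurability and the Lipschitz bound readable off a single expression.
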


We bound the distance between $\hat h_r$ and $\hat h_s$ in the $L^2(P_n)$ norm for $s \geq r \geq 0$ to prove the following Lemma.

\begin{lemma} \label{lComp}
Assume \ref{Y} and \ref{H}. Let $t \geq 1$ and recall the definition of $A_{1, t}$ from Lemma 22. On the set $A_{1, t} \in \mathcal{F}$, for which $\prob(A_{1, t}) \geq 1 - e^{-t}$, we have
\begin{equation*}
\lVert \hat h_r - \hat h_s \rVert_{L^2 (P_n)}^2 \leq \frac{80 \lVert k \rVert_{\diag}^{1/2} \sigma (r + s) t^{1/2}}{n^{1/2}} + 40 I_\infty (g, r)
\end{equation*}
simultaneously for all $s \geq r \geq 0$.
\end{lemma}
\begin{proof}
By Lemma 22, we have
\begin{align*}
\lVert \hat h_r - \hat h_s \rVert_{L^2 (P_n)}^2 &\leq 4 \lVert \hat h_r - h_r \rVert_{L^2 (P_n)}^2 + 4 \lVert h_r - g \rVert_{L^2 (P_n)}^2 \\
&+ 4 \lVert g - h_s \rVert_{L^2 (P_n)}^2 + 4 \lVert h_s - \hat h_s \rVert_{L^2 (P_n)}^2 \\
&\leq \frac{80 \lVert k \rVert_{\diag}^{1/2} \sigma (r + s) t^{1/2}}{n^{1/2}} + 20 \lVert h_r - g \rVert_\infty^2 + 20 \lVert h_s - g \rVert_\infty^2
\end{align*}
for all $r, s \geq 0$ and all $h_r \in r B_H, h_s \in s B_H$. Taking an infimum over $h_r \in r B_H$ and $h_s \in s B_H$ gives
\begin{equation*}
\lVert \hat h_r - \hat h_s \rVert_{L^2 (P_n)}^2 \leq \frac{80 \lVert k \rVert_{\diag}^{1/2} \sigma (r + s) t^{1/2}}{n^{1/2}} + 20 I_\infty (g, r) + 20 I_\infty (g, s).
\end{equation*}
The result follows. 
\end{proof}

Next, we show that the estimator $\hat r$ is well-defined. 

\begin{lemma} \label{lEstWD}
Let $\hat r$ be the infimum of all points attaining the minimum in \eqref{eEst}. Then $\hat r$ is well-defined.
\end{lemma}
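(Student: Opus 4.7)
The plan is to show that the objective
\begin{equation*}
F(r) = \sup_{s \in R, s \geq r} \left ( \lVert \hat h_r - \hat h_s \rVert_{L^2 (P_n)}^2 - \frac{\tau (r + s)}{n^{1/2}} \right ) + \frac{2 (1 + \nu) \tau r}{n^{1/2}}
\end{equation*}
attains its infimum on $R$, and then that the set $M$ of minimisers is closed and bounded so that $\hat r = \inf M$ lies in $M$ and is a well-defined real number. I would establish three properties of $F$ in turn: finiteness, coercivity, and lower semi-continuity on $R$.

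Finiteness of $F(r)$ follows because optimality of $\hat h_s$ (comparing its empirical risk to that at the admissible point $f = 0 \in s B_H$) forces $\lVert \hat h_s \rVert_{L^2 (P_n)} \leq 2 \lVert Y \rVert_{L^2 (P_n)}$ uniformly in $s \geq 0$, so the squared norm inside the supremum is uniformly bounded while $- \tau (r + s) / n^{1/2} \to - \infty$ as $s \to \infty$. Coercivity is immediate from \eqref{eCrit}, which gives $F(r) \geq 2 \nu \tau r / n^{1/2} \to \infty$ as $r \to \infty$. For lower semi-continuity, note that $(r, s) \mapsto \lVert \hat h_r - \hat h_s \rVert_{L^2 (P_n)}^2 - \tau (r + s) / n^{1/2}$ is jointly continuous by Lemma \ref{lEst} combined with $\lVert \cdot \rVert_{L^2 (P_n)} \leq \lVert k \rVert_{\diag}^{1/2} \lVert \cdot \rVert_H$; for $r_n \to r$ in $R$ and an $\epsilon$-near maximiser $s_\epsilon \in R \cap [r, \infty)$ of the supremum defining $F(r)$, I would split into the cases $s_\epsilon > r$ (so $s_\epsilon \in R \cap [r_n, \infty)$ eventually) and $s_\epsilon = r$ (where the diagonal choice $s = r_n$ is admissible at $r_n$), in each of which continuity delivers $\liminf_n F(r_n) \geq F(r) - \epsilon$.

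Coercivity confines every sublevel set of $F$ to a compact subset of $R$, so lower semi-continuity ensures the minimum is attained and $M$ is non-empty. Since $M$ is closed, being the sublevel set $F^{-1}((-\infty, \min F])$ of a lower semi-continuous function, and bounded, $\inf M \in M$ and $\hat r$ is well-defined. The main obstacle is the lower-semi-continuity step, since both the supremand and its domain $R \cap [r, \infty)$ depend on $r$; the case split on whether $s_\epsilon$ equals $r$ sidesteps this, and all continuity input then reduces to the Lipschitz bound $\lVert \hat h_r - \hat h_s \rVert_H^2 \leq \lvert r^2 - s^2 \rvert$ from Lemma \ref{lEst}.
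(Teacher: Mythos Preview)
Your argument is correct and establishes the same conclusion, but via a genuinely different route from the paper. The paper does not use coercivity at all: instead it invokes the explicit representation of $\hat h_r$ (Lemma~3 of \cite{page2017ivanov}) to produce a data-dependent radius $\rho$ beyond which $r \mapsto \hat h_r$ is constant, so that the objective is non-decreasing for $r \geq \rho$ and the infimum over $R$ equals the infimum over the compact set $R \cap [0, \rho]$. Lower semi-continuity then finishes the argument directly on that compact set. You replace this structural fact with two softer inputs: the uniform bound $\lVert \hat h_s \rVert_{L^2 (P_n)} \leq 2 \lVert Y \rVert_{L^2 (P_n)}$ from comparing against $f = 0$, and the coercivity estimate \eqref{eCrit}, which forces $F(r) \to \infty$ as $r \to \infty$ and so confines minimisers to a bounded (hence compact) subset of $R$.

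Your version has the advantage that it never unpacks the kernel-matrix formula for $\hat h_r$ and would apply to any family of estimators that is continuous in $r$ and uniformly bounded in $L^2 (P_n)$; it also makes explicit why the dependence of the inner supremum's index set on $r$ does not spoil lower semi-continuity, a point the paper handles implicitly. The paper's version, on the other hand, does not rely on $\nu > 0$ for this lemma and extracts the compact range $[0, \rho]$ constructively, which is convenient when one later wants measurability of $\hat r$.
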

\begin{proof}
Let $K$ be the $n \times n$ symmetric matrix with $K_{i, j} = k(X_i, X_j)$. By Lemma 3 of \cite{page2017ivanov}, we have that $K$ is an $(\real^{n \times n}, \bor(\real^{n \times n}))$-valued measurable matrix on $(\Omega, \mathcal{F})$ and that there exist an orthogonal matrix $A$ and a diagonal matrix $D$ which are both $(\real^{n \times n}, \bor(\real^{n \times n}))$-valued measurable matrices on $(\Omega, \mathcal{F})$ such that $K = A D A^{\trans}$. Furthermore, we can demand that the diagonal entries of $D$ are non-negative and non-increasing. Let $m = \rk K$ and
\begin{equation*}
\rho = \left ( \sum_{i = 1}^m D_{i, i}^{-1} (A^{\trans} Y)_i^2 \right )^{1/2},
\end{equation*}
which are random variables on $(\Omega, \mathcal{F})$. By Lemma 3 of \cite{page2017ivanov}, we have that $\hat h_r$ is constant in $r$ for $r \geq \rho$. Hence,
\begin{align}
&\inf_{r \in R} \left ( \sup_{s \in R, s \geq r} \left ( \lVert \hat h_r - \hat h_s \rVert_{L^2 (P_n)}^2 - \frac{\tau (r + s)}{n^{1/2}} \right ) + \frac{2 (1 + \nu) \tau r}{n^{1/2}} \right ) \nonumber \\
= &\inf_{r \in R \cap [0, \rho]} \left ( \sup_{s \in R, s \geq r} \left ( \lVert \hat h_r - \hat h_s \rVert_{L^2 (P_n)}^2 - \frac{\tau (r + s)}{n^{1/2}} \right ) + \frac{2 (1 + \nu) \tau r}{n^{1/2}} \right ). \label{eEstInf}
\end{align}
 By Lemma \ref{lEst}, we have
\begin{equation*}
\lVert \hat h_r - \hat h_s \rVert_{L^2 (P_n)}^2 - \frac{\tau (r + s)}{n^{1/2}}
\end{equation*}
is continuous in $r$ for all $s \in R$ such that $s \geq r$. The supremum of a collection of lower semi-continuous functions is lower semi-continuous. Therefore,
\begin{equation*}
\sup_{s \in R, s \geq r} \left ( \lVert \hat h_r - \hat h_s \rVert_{L^2 (P_n)}^2 - \frac{\tau (r + s)}{n^{1/2}} \right ) + \frac{2 (1 + \nu) \tau r}{n^{1/2}}
\end{equation*}
is lower semi-continuous in $r$. Hence, the infimum \eqref{eEstInf} is attained as it is the infimum of a lower semi-continuous function on a compact set. By lower semi-continuity, $\hat r$ also attains the infimum and is well-defined. 
\end{proof}

\subsection{Main Results} \label{sec:single_app_main}
We use the Goldenshluger--Lepski method to prove the following Theorem.

\begin{theorem} \label{tLep}
Assume \ref{Y}, \ref{H} and \ref{hr}. Let $\tau \geq 80 \lVert k \rVert_{\diag}^{1/2} \sigma$, $\nu > 0$ and
\begin{equation*}
t = \left ( \frac{\tau}{80 \lVert k \rVert_{\diag}^{1/2} \sigma} \right )^2 \geq 1.
\end{equation*}
Recall the definitions of $A_{1, t}$ and $A_{2, t}$ from Lemmas 22 and 24. On the set $A_{1, t} \cap A_{2, t} \in \mathcal{F}$, for which $\prob(A_{1, t} \cap A_{2, t}) \geq 1 - 2 e^{-t}$, we have
\begin{equation*}
\lVert V \hat h_{\hat r} - g \rVert_{L^2 (P)}^2
\end{equation*}
is at most
\begin{align*}
&\inf_{r \in R}\left ( \max \left \{ \frac{2 \tau r}{n^{1/2}} + \left ( \frac{1}{\nu} + \frac{97 C}{80 \sigma \nu} + \frac{C \tau}{2400 \lVert k \rVert_{\diag}^{1/2} \sigma^2 \nu n^{1/2}} \right ) \left ( 40 I_\infty (g, r) + \frac{2 (1 + \nu) \tau r}{n^{1/2}} \right ), \right. \right. \\
& \left. \left. \frac{4 (2 + \nu) \tau r}{n^{1/2}} + \frac{97 C \tau r}{40 \sigma n^{1/2}} + \frac{C \tau^2 r}{1200 \lVert k \rVert_{\diag}^{1/2} \sigma^2 n} \right \} + 80 I_\infty (g, r) + 2 \lVert V \hat h_r - g \rVert_{L^2 (P)}^2 \right ).
\end{align*}
\end{theorem}
\begin{proof}
Since we assume \ref{Y} and \ref{H}, we find that Lemma 22 holds, which implies that Lemma \ref{lComp} holds. By our choice of $t$, we have
\begin{equation}
\lVert \hat h_r - \hat h_s \rVert_{L^2 (P_n)}^2 \leq \frac{\tau (r + s)}{n^{1/2}} + 40 I_\infty (g, r) \label{eComp}
\end{equation}
simultaneously for all $s, r \in R$ such that $s \geq r \geq 0$. Fix $r \in R$ and suppose that $\hat r \leq r$. By the definition of $\hat r$ in \eqref{eEst} and \eqref{eComp}, we have
\begin{align*}
\lVert \hat h_{\hat r} - \hat h_r \rVert_{L^2 (P_n)}^2 &= \lVert \hat h_{\hat r} - \hat h_r \rVert_{L^2 (P_n)}^2 - \frac{\tau (\hat r + r)}{n^{1/2}} + \frac{\tau (\hat r + r)}{n^{1/2}} \\
&\leq \sup_{s \in R, s \geq \hat r} \left ( \lVert \hat h_{\hat r} - \hat h_s \rVert_{L^2 (P_n)}^2 - \frac{\tau (\hat r + s)}{n^{1/2}} \right ) + \frac{2 \tau r}{n^{1/2}} \\
&\leq \sup_{s \in R, s \geq r} \left ( \lVert \hat h_r - \hat h_s \rVert_{L^2 (P_n)}^2 - \frac{\tau (r + s)}{n^{1/2}} \right ) + \frac{2 (2 + \nu) \tau r}{n^{1/2}} - \frac{2 (1 + \nu) \tau \hat r}{n^{1/2}} \\
&\leq 40 I_\infty (g, r) + \frac{2 (2 + \nu) \tau r}{n^{1/2}}.
\end{align*}
This shows
\begin{equation*}
\lVert V \hat h_{\hat r} - V \hat h_r \rVert_{L^2 (P_n)}^2 \leq 40 I_\infty (g, r) + \frac{2 (2 + \nu) \tau r}{n^{1/2}},
\end{equation*}
and it follows from Lemma 24 and our choice of $t$ that
\begin{equation*}
\lVert V \hat h_{\hat r} - V \hat h_r \rVert_{L^2 (P)}^2 \leq 40 I_\infty (g, r) + \frac{2 (2 + \nu) \tau r}{n^{1/2}} + \frac{97 C \tau r}{80 \sigma n^{1/2}} + \frac{C \tau^2 r}{2400 \lVert k \rVert_{\diag}^{1/2} \sigma^2 n}.
\end{equation*}
Hence,
\begin{align*}
&\lVert V \hat h_{\hat r} - g \rVert_{L^2 (P)}^2 \\
\leq \; &2 \lVert V \hat h_{\hat r} - V \hat h_r \rVert_{L^2 (P)}^2 + 2 \lVert V \hat h_r - g \rVert_{L^2 (P)}^2 \\
\leq \; &80 I_\infty (g, r) + \frac{4 (2 + \nu) \tau r}{n^{1/2}} + \frac{97 C \tau r}{40 \sigma n^{1/2}} + \frac{C \tau^2 r}{1200 \lVert k \rVert_{\diag}^{1/2} \sigma^2 n} + 2 \lVert V \hat h_r - g \rVert_{L^2 (P)}^2.
\end{align*}

Now suppose instead that $\hat r \geq r$. Since \eqref{eComp} holds simultaneously for all $s, r \in R$ such that $s \geq r \geq 0$, we have
\begin{equation*}
\lVert \hat h_{\hat r} - \hat h_r \rVert_{L^2 (P_n)}^2 \leq \frac{\tau (r + \hat r)}{n^{1/2}} + 40 I_\infty (g, r).
\end{equation*}
This shows
\begin{equation*}
\lVert V \hat h_{\hat r} - V \hat h_r \rVert_{L^2 (P_n)}^2 \leq \frac{\tau r}{n^{1/2}} + 40 I_\infty (g, r) + \frac{\tau \hat r}{n^{1/2}},
\end{equation*}
and it follows from Lemma 24 that
\begin{align*}
&\lVert V \hat h_{\hat r} - V \hat h_r \rVert_{L^2 (P)}^2 \\
\leq \; &\frac{\tau r}{n^{1/2}} + 40 I_\infty (g, r) + \frac{\tau \hat r}{n^{1/2}} + \frac{97 C \tau \hat r}{80 \sigma n^{1/2}} + \frac{C \tau^2 \hat r}{2400 \lVert k \rVert_{\diag}^{1/2} \sigma^2 n} \\
= \; &\frac{\tau r}{n^{1/2}} + 40 I_\infty (g, r) + \left ( \frac{1}{2 \nu} + \frac{97 C}{160 \sigma \nu} + \frac{C \tau}{4800 \lVert k \rVert_{\diag}^{1/2} \sigma^2 \nu n^{1/2}} \right ) \frac{2 \nu \tau \hat r}{n^{1/2}}.
\end{align*}
By \eqref{eCrit}, the definition of $\hat r$ in \eqref{eEst} and \eqref{eComp}, we have
\begin{align*}
\frac{2 \nu \tau \hat r}{n^{1/2}} &\leq \sup_{s \in R, s \geq \hat r} \left ( \lVert \hat h_{\hat r} - \hat h_s \rVert_{L^2 (P_n)}^2 - \frac{\tau (\hat r + s)}{n^{1/2}} \right ) + \frac{2 (1 + \nu) \tau \hat r}{n^{1/2}} \\
&\leq \sup_{s \in R, s \geq r} \left ( \lVert \hat h_r - \hat h_s \rVert_{L^2 (P_n)}^2 - \frac{\tau (r + s)}{n^{1/2}} \right ) + \frac{2 (1 + \nu) \tau r}{n^{1/2}} \\
&\leq 40 I_\infty (g, r) + \frac{2 (1 + \nu) \tau r}{n^{1/2}}.
\end{align*}
Hence,
\begin{align*}
&\lVert V \hat h_{\hat r} - g \rVert_{L^2 (P)}^2 \\
\leq \; &2 \lVert V \hat h_{\hat r} - V \hat h_r \rVert_{L^2 (P)}^2 + 2 \lVert V \hat h_r - g \rVert_{L^2 (P)}^2 \\
\leq \; &\frac{2 \tau r}{n^{1/2}} + 80 I_\infty (g, r) + \left ( \frac{1}{\nu} + \frac{97 C}{80 \sigma \nu} + \frac{C \tau}{2400 \lVert k \rVert_{\diag}^{1/2} \sigma^2 \nu n^{1/2}} \right ) \left ( 40 I_\infty (g, r) + \frac{2 (1 + \nu) \tau r}{n^{1/2}} \right ) \\
+ \; &2 \lVert V \hat h_r - g \rVert_{L^2 (P)}^2.
\end{align*}
The result follows. 
\end{proof}

We assume \ref{g1} to bound the distance between $V \hat h_{\hat r}$ and $g$ in the $L^2(P)$ norm and prove Theorem \ref{tLepBound}.

{\bf Proof of Theorem \ref{tLepBound}}
By Theorem \ref{tLep}, we have
\begin{equation*}
\lVert V \hat h_{\hat r} - g \rVert_{L^2 (P)}^2 \leq \inf_{r \in R} \left ( (1 + D_4 \tau n^{-1/2}) (D_5 \tau r n^{-1/2} + D_6 I_\infty (g, r)) + 2 \lVert V \hat h_r - g \rVert_{L^2 (P)}^2 \right )
\end{equation*}
for some constants $D_4, D_5, D_6 > 0$ not depending on $\tau$, $r$ or $n$. By Theorem \ref{tBound}, we have
\begin{align*}
\lVert V \hat h_r - g \rVert_{L^2 (P)}^2 &\leq \frac{(97 C + 20 \sigma) \tau r}{40 \sigma n^{1/2}} + \frac{C \tau^2 r}{1200 \lVert k \rVert_{\diag}^{1/2} \sigma^2 n} + 10 I_\infty (g, r) \\
&\leq D_7 \tau r n^{-1/2} + D_8 \tau^2 r n^{-1} + 10 I_\infty (g, r).
\end{align*}
for all $r \in R$, for some constants $D_7, D_8 > 0$ not depending on $\tau$, $r$ or $n$. This gives
\begin{align*}
\lVert V \hat h_{\hat r} - g \rVert_{L^2 (P)}^2 \leq \inf_{r \in R} &\left ( (1 + D_4 \tau n^{-1/2}) (D_5 \tau r n^{-1/2} + D_6 I_\infty (g, r)) \right . \\
&+ \left . 2 D_7 \tau r n^{-1/2} + 2 D_8 \tau^2 r n^{-1} + 20 I_\infty (g, r) \right ).
\end{align*}
Hence, the result follows with
\begin{equation*}
D_1 = \frac{D_4 D_5 + 2 D_8}{D_5 + 2 D_7}, \; D_2 = D_5 + 2 D_7, \; D_3 = D_6 + 20.
\end{equation*} \hfill \BlackBox

We assume \ref{g2} to prove Theorem \ref{tInterLepBound}.

{\bf Proof of Corollary \ref{tInterLepBound}}
If we assume \ref{R1}, then $r = a n^{(1-\beta)/(2 (1+\beta))} \in R$ and
\begin{align*}
\lVert V \hat h_{\hat r} - g \rVert_{L^2 (P)}^2 &\leq (1 + D_3 \tau n^{-1/2}) (D_4 \tau r n^{-1/2} + D_5 I_\infty (g, r)) \\
&\leq (1 + D_3 \tau n^{-1/2}) \left ( D_4 \tau a n^{- \beta/(1+\beta)} + \frac{D_5 B^{2/(1-\beta)}}{a^{2 \beta/(1-\beta)} n^{\beta/(1+\beta)}} \right )
\end{align*}
for some constants $D_3, D_4, D_5 > 0$ not depending on $n$ or $\tau$ by Theorem \ref{tLepBound} and \eqref{eApproxInter}. If we assume \ref{R2}, then there is a unique $r \in R$ such that
\begin{equation*}
a n^{(1-\beta)/(2 (1+\beta))} \leq r < a n^{(1-\beta)/(2 (1+\beta))} + b
\end{equation*}
and
\begin{align*}
&\lVert V \hat h_{\hat r} - g \rVert_{L^2 (P)}^2 \\
\leq \; &(1 + D_3 \tau n^{-1/2}) (D_4 \tau r n^{-1/2} + D_5 I_\infty (g, r)) \\
\leq \; &(1 + D_3 \tau n^{-1/2}) \left ( D_4 \tau (a n^{(1-\beta)/(2 (1+\beta))} + b) n^{- 1/2} + \frac{D_5 B^{2/(1-\beta)}}{a^{2 \beta/(1-\beta)} n^{\beta/(1+\beta)}} \right )
\end{align*}
by Theorem \ref{tLepBound} and \eqref{eApproxInter}. In either case,
\begin{equation*}
\lVert V \hat h_{\hat r} -  g \rVert_{L^2 (P)}^2 \leq D_1 \tau n^{-\beta/(1+\beta)} + D_2 \tau^2 n^{-(1+3\beta)/(2(1+\beta))}
\end{equation*}
for some constants $D_1, D_2 > 0$ not depending on $n$ or $\tau$. \hfill \BlackBox

{\bf Proof of Corollary \ref{InterLepExpBound}} Under the stated assumptions we can apply Theorem \ref{tLepBound}. Let $D_1' = 80 \lVert k \rVert_{\diag}^{1/2} \sigma D_1 n^{-\beta/(1+\beta)}$ and
$D_2' =  6400 \lVert k \rVert_{\diag} D_2 \sigma^2  n^{-(1+3\beta)/(2(1+\beta))}$ with constants $D_1,D_2$ from Theorem \ref{tLepBound}. Furthermore, let 
$\phi(t) = D_1' t^{1/2}   + D_2' t$ then   
\begin{align*}
&\expect(\lVert V \hat h_{\hat r} -  g \rVert_{L^2 (P)}^2) \\
=& \int_{0}^\infty  \prob( \lVert V \hat h_{\hat r} -  g \rVert_{L^2 (P)}^2 \geq a) \, da \\
=& \int_{0}^\infty  \prob( \lVert V \hat h_{\hat r} -  g \rVert_{L^2 (P)}^2 \geq \phi(t)) (D_1' t^{1/2} + D_2') \, dt \\
\leq&  (\pi^{1/2} +1) D_1' + 3D_2',   
\end{align*}
using the bound on $\prob( \lVert V \hat h_{\hat r} -  g \rVert_{L^2 (P)}^2 \geq \phi(t))$ for $t\geq 1$. 
The result follows by letting $D = 80 (\pi^{1/2} +1)  \lVert k \rVert_{\diag}^{1/2} \sigma D_1 +  19200 \lVert k \rVert_{\diag} D_2 \sigma^2$.
\hfill \BlackBox

\section{Proof of the Goldenshluger--Lepski Method for a Collection of RKHSs with Gaussian Kernels}
This section is composed of two subsections, one containing auxiliary results (Subsection \ref{sec:multiple_app_aux}) and one containing the proofs of our main results  (Subsection \ref{sec:multiple_app_main}).

\subsection{Auxiliary Results} \label{sec:multiple_app_aux}
We bound the distance between $\hat h_{\gamma, r}$ and $\hat h_{\eta, s}$ in the $L^2(P_n)$ norm for $\gamma, \eta \in \Gamma$ with $\eta \leq \gamma$ and $s \geq r \geq 0$ to prove the following  Lemma. 

\begin{lemma} \label{lGaussComp}
Assume \ref{Y} and \ref{K2}. Let $t \geq 1$ and recall the definition of $A_{3, t}$ from Lemma 29. On the set $A_{3, t} \in \mathcal{F}$, for which $\prob(A_{3, t}) \geq 1 - e^{-t}$, we have
\begin{equation*}
\lVert \hat h_{\gamma, r} - \hat h_{\eta, s} \rVert_{L^2 (P_n)}^2 \leq \frac{84 J \sigma (\gamma^{-d/2} r + \eta^{-d/2} s) t^{1/2}}{n^{1/2}} + 40 I_\infty (g, \gamma, r)
\end{equation*}
simultaneously for all $\gamma, \eta \in \Gamma$ such that $\eta \leq \gamma$ and all $s \geq r \geq 0$.
\end{lemma}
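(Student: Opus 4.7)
The plan is to mirror the fixed-RKHS argument behind Lemma \ref{lComp}, with the extra ingredient being the nesting of unit balls supplied by Lemma \ref{lNest}. I will work throughout on the event $A_{3,t}$, on which the conclusion of Lemma \ref{lVaryBias} holds simultaneously for all $k \in \mathcal{K}$, all radii, and all admissible reference functions, and which has probability at least $1 - e^{-t}$.

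Fix $\gamma, \eta \in \Gamma$ with $\eta \leq \gamma$ and $s \geq r \geq 0$. By Lemma \ref{lNest}, $B_\gamma \subseteq B_\eta$, so $r B_\gamma \subseteq s B_\gamma \subseteq s B_\eta$. This is the key observation: any single $h \in r B_\gamma$ simultaneously serves as an admissible reference function in Lemma \ref{lVaryBias} both for $\hat h_{\gamma, r}$ (with kernel $k_\gamma$, radius $r$) and for $\hat h_{\eta, s}$ (with kernel $k_\eta$, radius $s$). Using $\lVert k_\gamma \rVert_{\diag}^{1/2} = \gamma^{-d/2}$ under \ref{K2}, Lemma \ref{lVaryBias} gives
$$\lVert \hat h_{\gamma, r} - h \rVert_{L^2(P_n)}^2 \leq \frac{21 J \gamma^{-d/2} \sigma r t^{1/2}}{n^{1/2}} + 4 \lVert h - g \rVert_\infty^2$$
and the analogous bound for $\lVert \hat h_{\eta, s} - h \rVert_{L^2(P_n)}^2$ with $(\gamma, r)$ replaced by $(\eta, s)$.

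I would then route through $g$ (rather than combine the two displays directly), as this is what yields the stated coefficients $84$ and $40$. Applying $\lVert a - b \rVert^2 \leq 2 \lVert a - c \rVert^2 + 2 \lVert c - b \rVert^2$ first with $c = g$ and then, for each resulting summand, again with $c = h$, together with $\lVert h - g \rVert_{L^2(P_n)} \leq \lVert h - g \rVert_\infty$, produces
$$\lVert \hat h_{\gamma, r} - g \rVert_{L^2(P_n)}^2 \leq \frac{42 J \gamma^{-d/2} \sigma r t^{1/2}}{n^{1/2}} + 10 \lVert h - g \rVert_\infty^2,$$
together with the analogous bound for $\hat h_{\eta, s}$. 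Summing and multiplying by $2$ gives the claim with $40 \lVert h - g \rVert_\infty^2$ in place of $40 I_\infty(g, \gamma, r)$.

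Finally, since the left-hand side is independent of $h$, I would pass to the infimum over $h \in r B_\gamma$, using an $\varepsilon$-minimiser and then letting $\varepsilon \to 0$ because the infimum defining $I_\infty(g, \gamma, r)$ need not be attained. No genuine obstacle arises beyond the choice to pass through $g$ in order to match the precise constants; everything else is routine bookkeeping on top of Lemmas \ref{lVaryBias} and \ref{lNest}.
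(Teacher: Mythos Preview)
Your argument is correct and follows essentially the same route as the paper: a four-term split through $g$ and a reference element, Lemma \ref{lVaryBias} for the estimator-to-reference terms, and Lemma \ref{lNest} for the nesting. The only cosmetic difference is that the paper uses two separate reference functions $h_{\gamma,r}\in rB_\gamma$ and $h_{\eta,s}\in sB_\eta$, takes the two infima to get $20I_\infty(g,\gamma,r)+20I_\infty(g,\eta,s)$, and then invokes Lemma \ref{lNest} at the end to bound $I_\infty(g,\eta,s)\le I_\infty(g,\gamma,r)$; you instead invoke the nesting up front so that a single $h\in rB_\gamma$ serves both roles, leading directly to $40\lVert h-g\rVert_\infty^2$ before the infimum. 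Both orderings give the same constants.
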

\begin{proof}
By Lemma 29, we have
\begin{align*}
\lVert \hat h_{\gamma, r} - \hat h_{\eta, s} \rVert_{L^2 (P_n)}^2 &\leq 4 \lVert \hat h_{\gamma, r} - h_{\gamma, r} \rVert_{L^2 (P_n)}^2 + 4 \lVert h_{\gamma, r} - g \rVert_{L^2 (P_n)}^2 \\
&+ 4 \lVert g - h_{\eta, s} \rVert_{L^2 (P_n)}^2 + 4 \lVert h_{\eta, s} - \hat h_{\eta, s} \rVert_{L^2 (P_n)}^2 \\
&\leq \frac{84 J \sigma (\gamma^{-d/2} r + \eta^{-d/2} s) t^{1/2}}{n^{1/2}} + 20 \lVert h_{\gamma, r} - g \rVert_\infty^2 + 20 \lVert h_{\eta, s} - g \rVert_\infty^2
\end{align*}
for all $\gamma, \eta \in \Gamma$, all $r, s \geq 0$ and all $h_{\gamma, r} \in r B_\gamma, h_{\eta, s} \in s B_\eta$. Taking an infimum over $h_{\gamma, r} \in r B_\gamma$ and $h_{\eta, s} \in s B_\eta$ gives
\begin{equation*}
\lVert \hat h_{\gamma, r} - \hat h_{\eta, s} \rVert_{L^2 (P_n)}^2 \leq \frac{84 J \sigma (\gamma^{-d/2} r + \eta^{-d/2} s) t^{1/2}}{n^{1/2}} + 20 I_\infty (g, \gamma, r) + 20 I_\infty (g, \eta, s).
\end{equation*}
The result follows from Lemma \ref{lNest}. 
\end{proof}

\subsection{Main Results} \label{sec:multiple_app_main}
We use the Goldenshluger--Lepski method to prove the following Theorem.

\begin{theorem} \label{tGaussLep}
Assume \ref{Y} and \ref{K2}. Let $\tau \geq 84 J \sigma$, $\nu > 0$ and
\begin{equation*}
t = \left ( \frac{\tau}{84 J \sigma} \right )^2 \geq 1.
\end{equation*}
Recall the definitions of $A_{3, t}$ and $A_{4, t}$ from Lemmas 29 and 31. On the set $A_{3, t} \cap A_{4, t} \in \mathcal{F}$, for which $\prob(A_{3, t} \cap A_{4, t}) \geq 1 - 2 e^{-t}$, we have
\begin{equation*}
\lVert V \hat h_{\hat \gamma, \hat r} - g \rVert_{L^2 (P)}^2
\end{equation*}
is at most
\begin{align*}
&\inf_{\gamma \in \Gamma} \inf_{r \in R} \left ( 320 I_\infty (g, \gamma, r) + \frac{4 v^{d/2} (5 + 2 \nu) \tau \gamma^{-d/2} r}{u^{d/2} n^{1/2}} + \frac{302 C v^{d/2} \tau \gamma^{-d/2} r}{21 u^{d/2} \sigma n^{1/2}} + \frac{4 C v^{d/2} \tau^2 \gamma^{-d/2} r}{1323 J^2 u^{d/2} \sigma^2 n} \right. \\
&+ \left ( \frac{12 v^{d/2}}{u^{d/2} \nu} + \frac{302 C v^{d/2}}{21 u^{d/2} \sigma \nu} + \frac{4 C v^{d/2} \tau}{1323 J^2 u^{d/2} \sigma^2 \nu n^{1/2}} \right ) \left ( 20 I_\infty (g, \gamma, r) + \frac{(1 + \nu) \tau \gamma^{-d/2} r}{n^{1/2}} \right ) \\
&+ \left. 2 \lVert V \hat h_{\gamma, r} - g \rVert_{L_2 (P)}^2 \right ).
\end{align*}
\end{theorem}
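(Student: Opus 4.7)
The plan parallels the single-parameter argument of Theorem~\ref{tLep}, but with the new ingredient that the comparison passes through an intermediate estimator at $(\hat\gamma\wedge\gamma,\hat r\vee r)$, chosen precisely so that this pair is simultaneously admissible in Lemma~\ref{lGaussComp} with base point $(\gamma,r)$ (since $\hat\gamma\wedge\gamma\leq\gamma$ and $\hat r\vee r\geq r$) and admissible in the supremum defining the objective at $(\hat\gamma,\hat r)$ (since $\hat\gamma\wedge\gamma\leq\hat\gamma$ and $\hat r\vee r\geq\hat r$). I would fix an arbitrary $(\gamma,r)\in\Gamma\times R$ and start from
\begin{equation*}
\lVert V\hat h_{\hat\gamma,\hat r}-g\rVert_{L^2(P)}^2\leq 2\lVert V\hat h_{\hat\gamma,\hat r}-V\hat h_{\gamma,r}\rVert_{L^2(P)}^2+2\lVert V\hat h_{\gamma,r}-g\rVert_{L^2(P)}^2,
\end{equation*}
so that only the cross-term needs to be analysed, and then take the infimum over $(\gamma,r)$ at the end.

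The choice $t^{1/2}=\tau/(84J\sigma)$ rewrites Lemma~\ref{lGaussComp} as $\lVert\hat h_{\gamma',r'}-\hat h_{\eta,s}\rVert_{L^2(P_n)}^2\leq\tau((\gamma')^{-d/2}r'+\eta^{-d/2}s)/n^{1/2}+40\,I_\infty(g,\gamma',r')$, so on $A_{3,t}$ the objective in~\eqref{eGaussEst} at any $(\gamma',r')$ is at most $40\,I_\infty(g,\gamma',r')+2(1+\nu)\tau(\gamma')^{-d/2}r'/n^{1/2}$. Since $(\hat\gamma,\hat r)$ is the minimiser, its objective value is also bounded by this expression evaluated at $(\gamma,r)$. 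Next I would plug $(\eta,s)=(\hat\gamma\wedge\gamma,\hat r\vee r)$ into the supremum at $(\hat\gamma,\hat r)$, which bounds $\lVert\hat h_{\hat\gamma,\hat r}-\hat h_{\hat\gamma\wedge\gamma,\hat r\vee r}\rVert_{L^2(P_n)}^2$ in terms of $I_\infty(g,\gamma,r)$, penalty terms at $(\gamma,r)$, and a negative contribution in $\hat\gamma^{-d/2}\hat r$; a direct application of Lemma~\ref{lGaussComp} bounds $\lVert\hat h_{\gamma,r}-\hat h_{\hat\gamma\wedge\gamma,\hat r\vee r}\rVert_{L^2(P_n)}^2$. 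The elementary inequality $\lVert a-c\rVert^2\leq 2\lVert a-b\rVert^2+2\lVert b-c\rVert^2$ combined with the fact that $V$ is a contraction then yields a bound on $\lVert V\hat h_{\hat\gamma,\hat r}-V\hat h_{\gamma,r}\rVert_{L^2(P_n)}^2$.

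To transfer this to the $L^2(P)$ norm I would invoke Lemma~\ref{lVarySwapNorm} on $A_{4,t}$, where Lemma~\ref{lNest} plays the crucial role of placing both $\hat h_{\hat\gamma,\hat r}\in\hat rB_{\hat\gamma}$ and $\hat h_{\gamma,r}\in rB_\gamma$ inside the common ball $(\hat r\vee r)B_{\hat\gamma\wedge\gamma}$, so the lemma is applicable with $k=k_{\hat\gamma\wedge\gamma}$ and radius $\hat r\vee r$. The resulting estimates will contain the quantity $(\hat\gamma\wedge\gamma)^{-d/2}(\hat r\vee r)$, which I would dominate by $(v/u)^{d/2}(\hat\gamma^{-d/2}\hat r+\gamma^{-d/2}r)$ through a short case analysis on the orderings of $\hat\gamma$ versus $\gamma$ and $\hat r$ versus $r$, using $\Gamma\subseteq[u,v]$. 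The remaining self-referential terms proportional to $\tau\hat\gamma^{-d/2}\hat r/n^{1/2}$ are eliminated by combining~\eqref{eGaussCrit} with the upper bound on the objective at $(\hat\gamma,\hat r)$ to obtain $\tau\hat\gamma^{-d/2}\hat r/n^{1/2}\leq 20\,I_\infty(g,\gamma,r)/\nu+(1+\nu)\tau\gamma^{-d/2}r/(\nu n^{1/2})$; substituting this and taking the infimum over $(\gamma,r)$ yields the stated bound. The hard part will not be conceptual but rather the bookkeeping: tracking the many constants arising from the two applications of each of Lemmas~\ref{lGaussComp} and~\ref{lVarySwapNorm}, handling the $\max$--$\min$ manipulations in the case analysis, and keeping track of powers of $\tau$ and $n$.
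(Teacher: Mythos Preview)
Your proposal is correct and follows essentially the same route as the paper: the passage through the intermediate point $(\hat\gamma\wedge\gamma,\hat r\vee r)$, the use of Lemma~\ref{lGaussComp} for both legs, the application of Lemma~\ref{lVarySwapNorm} with Lemma~\ref{lNest} supplying the common ball $(\hat r\vee r)B_{\hat\gamma\wedge\gamma}$, the domination $(\hat\gamma\wedge\gamma)^{-d/2}(\hat r\vee r)\leq (v/u)^{d/2}(\hat\gamma^{-d/2}\hat r+\gamma^{-d/2}r)$, and the final elimination of $\hat\gamma^{-d/2}\hat r$ via~\eqref{eGaussCrit} and minimality are exactly what the paper does. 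The only cosmetic difference is that the paper applies Lemma~\ref{lVarySwapNorm} separately to each leg $V\hat h_{\hat\gamma,\hat r}-V\hat h_{\hat\gamma\wedge\gamma,\hat r\vee r}$ and $V\hat h_{\hat\gamma\wedge\gamma,\hat r\vee r}-V\hat h_{\gamma,r}$ and then combines in $L^2(P)$, whereas you combine the two legs in $L^2(P_n)$ first and swap once; this changes only the constant bookkeeping and, if anything, is slightly tighter.
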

\begin{proof}
Since we assume \ref{Y} and \ref{K2}, which implies \ref{K1}, we find that Lemma 29 holds, which implies that Lemma \ref{lGaussComp} holds. By our choice of $t$, we have
\begin{equation}
\lVert \hat h_{\gamma, r} - \hat h_{\eta, s} \rVert_{L^2 (P_n)}^2 \leq \frac{\tau (\gamma^{-d/2} r + \eta^{-d/2} s)}{n^{1/2}} + 40 I_\infty (g, \gamma, r) \label{eGaussComp}
\end{equation}
simultaneously for all $\gamma, \eta \in \Gamma$ and all $r, s \in R$ such that $\eta \leq \gamma$ and $s \geq r$. Fix $\gamma \in \Gamma$ and $r \in R$. Then
\begin{equation*}
\lVert V \hat h_{\hat \gamma, \hat r} - V \hat h_{\gamma, r} \rVert_{L_2 (P)}^2 \leq 2 \lVert V \hat h_{\hat \gamma, \hat r} - V \hat h_{\hat \gamma \wedge \gamma, \hat r \vee r} \rVert_{L_2 (P)}^2 + 2 \lVert V h_{\hat \gamma \wedge \gamma, \hat r \vee r} - V \hat h_{\gamma, r} \rVert_{L_2 (P)}^2.
\end{equation*}
We now bound the right-hand side. By $\Gamma \subs [u, v]$, the definition of $(\hat \gamma, \hat r)$ in \eqref{eGaussEst} and \eqref{eGaussComp}, we have
\begin{align*}
&\lVert \hat h_{\hat \gamma, \hat r} - \hat h_{\hat \gamma \wedge \gamma, \hat r \vee r} \rVert_{L_2 (P_n)}^2 \\
= \; &\lVert \hat h_{\hat \gamma, \hat r} - \hat h_{\hat \gamma \wedge \gamma, \hat r \vee r} \rVert_{L_2 (P_n)}^2 - \frac{\tau (\hat \gamma^{-d/2} \hat r + (\hat \gamma \wedge \gamma)^{-d/2} (\hat r \vee r))}{n^{1/2}} \\
+ \; &\frac{\tau (\hat \gamma^{-d/2} \hat r + (\hat \gamma \wedge \gamma)^{-d/2} (\hat r + r))}{n^{1/2}} \\
\leq \; &\sup_{\eta \in \Gamma, \eta \leq \hat \gamma} \sup_{s \in R, s \geq \hat r} \left ( \lVert \hat h_{\hat \gamma, \hat r} - \hat h_{\eta, s} \rVert_{L_2 (P_n)}^2 - \frac{\tau \left ( \hat \gamma^{-d/2} \hat r + \eta^{-d/2} s \right )}{n^{1/2}} \right ) \\
+ \; &\frac{\tau (\hat \gamma^{-d/2} \hat r + (v/u)^{d/2} (\hat \gamma^{-d/2} \hat r + \gamma^{-d/2} r))}{n^{1/2}} \\
\leq \; &\sup_{\eta \in \Gamma, \eta \leq \gamma} \sup_{s \in R, s \geq r} \left ( \lVert \hat h_{\gamma, r} - \hat h_{\eta, s} \rVert_{L_2 (P_n)}^2 - \frac{\tau (\gamma^{-d/2} r + \eta^{-d/2} s)}{n^{1/2}} \right ) \\
+ \; &\frac{2 (1 + \nu) \tau \gamma^{-d/2} r}{n^{1/2}} - \frac{2 (1 + \nu) \tau \hat \gamma^{-d/2} \hat r}{n^{1/2}} + \frac{v^{d/2} \tau (2 \hat \gamma^{-d/2} \hat r + \gamma^{-d/2} r)}{u^{d/2} n^{1/2}} \\
\leq \; &40 I_\infty (g, \gamma, r) + \frac{v^{d/2} (3 + 2 \nu) \tau \gamma^{-d/2} r}{u^{d/2} n^{1/2}} + \frac{2 v^{d/2} \tau \hat \gamma^{-d/2} \hat r}{u^{d/2} n^{1/2}}.
\end{align*}
This shows
\begin{equation*}
\lVert V \hat h_{\hat \gamma, \hat r} - V \hat h_{\hat \gamma \wedge \gamma, \hat r \vee r} \rVert_{L_2 (P_n)}^2 \leq 40 I_\infty (g, \gamma, r) + \frac{v^{d/2} (3 + 2 \nu) \tau \gamma^{-d/2} r}{u^{d/2} n^{1/2}} + \frac{2 v^{d/2} \tau \hat \gamma^{-d/2} \hat r}{u^{d/2} n^{1/2}},
\end{equation*}
and it follows from Lemma 31, our choice of $t$ and $\Gamma \subs [u, v]$ that
\begin{align*}
&\lVert V \hat h_{\hat \gamma, \hat r} - V \hat h_{\hat \gamma \wedge \gamma, \hat r \vee r} \rVert_{L_2 (P)}^2 \\
\leq \; &40 I_\infty (g, \gamma, r) + \frac{v^{d/2} (3 + 2 \nu) \tau \gamma^{-d/2} r}{u^{d/2} n^{1/2}} + \frac{2 v^{d/2} \tau \hat \gamma^{-d/2} \hat r}{u^{d/2} n^{1/2}} \\
+ \; &\left ( \frac{151 C \tau}{84 \sigma n^{1/2}} + \frac{C \tau^2}{2646 J^2 \sigma^2 n} \right ) (\hat \gamma \wedge \gamma)^{-d/2} (\hat r \vee r) \\
\leq \; &40 I_\infty (g, \gamma, r) + \frac{v^{d/2} (3 + 2 \nu) \tau \gamma^{-d/2} r}{u^{d/2} n^{1/2}} + \frac{2 v^{d/2} \tau \hat \gamma^{-d/2} \hat r}{u^{d/2} n^{1/2}} \\
+ \; &\left ( \frac{151 C \tau}{84 \sigma n^{1/2}} + \frac{C \tau^2}{2646 J^2 \sigma^2 n} \right ) (v/u)^{d/2} (\hat \gamma^{-d/2} \hat r + \gamma^{-d/2} r) \\
= \; &40 I_\infty (g, \gamma, r) + \frac{v^{d/2} (3 + 2 \nu) \tau \gamma^{-d/2} r}{u^{d/2} n^{1/2}} + \frac{151 C v^{d/2} \tau \gamma^{-d/2} r}{84 u^{d/2} \sigma n^{1/2}} + \frac{C v^{d/2} \tau^2 \gamma^{-d/2} r}{2646 J^2 u^{d/2} \sigma^2 n} \\
+ \; &\left ( \frac{v^{d/2}}{u^{d/2} \nu} + \frac{151 C v^{d/2}}{168 u^{d/2} \sigma \nu} + \frac{C v^{d/2} \tau}{5292 J^2 u^{d/2} \sigma^2 \nu n^{1/2}} \right ) \frac{2 \nu \tau \hat \gamma^{-d/2} \hat r}{n^{1/2}}.
\end{align*}
By \eqref{eGaussCrit}, the definition of $(\hat \gamma, \hat r)$ in \eqref{eGaussEst} and \eqref{eGaussComp}, we have
\begin{align}
&\frac{2 \nu \tau \hat \gamma^{-d/2} \hat r}{n^{1/2}} \nonumber \\
\leq \; &\sup_{\eta \in \Gamma, \eta \leq \hat \gamma} \sup_{s \in R, s \geq \hat r} \left ( \lVert \hat h_{\hat \gamma, \hat r} - \hat h_{\eta, s} \rVert_{L_2 (P_n)}^2 - \frac{\tau (\hat \gamma^{-d/2} \hat r + \eta^{-d/2} s)}{n^{1/2}} \right ) + \frac{2 (1 + \nu) \tau \hat \gamma^{-d/2} \hat r}{n^{1/2}} \nonumber \\
\leq \; &\sup_{\eta \in \Gamma, \eta \leq \gamma} \sup_{s \in R, s \geq r} \left ( \lVert \hat h_{\gamma, r} - \hat h_{\eta, s} \rVert_{L_2 (P_n)}^2 - \frac{\tau (\gamma^{-d/2} r + \eta^{-d/2} s)}{n^{1/2}} \right ) + \frac{2 (1 + \nu) \tau \gamma^{-d/2} r}{n^{1/2}} \nonumber \\
\leq \; &40 I_\infty (g, \gamma, r) + \frac{2 (1 + \nu) \tau \gamma^{-d/2} r}{n^{1/2}}. \label{eGaussCritBd}
\end{align}
Hence,
\begin{align*}
&\lVert V \hat h_{\hat \gamma, \hat r} - V \hat h_{\hat \gamma \wedge \gamma, \hat r \vee r} \rVert_{L_2 (P)}^2 \\
\leq \; &40 I_\infty (g, \gamma, r) + \frac{v^{d/2} (3 + 2 \nu) \tau \gamma^{-d/2} r}{u^{d/2} n^{1/2}} + \frac{151 C v^{d/2} \tau \gamma^{-d/2} r}{84 u^{d/2} \sigma n^{1/2}} + \frac{C v^{d/2} \tau^2 \gamma^{-d/2} r}{2646 J^2 u^{d/2} \sigma^2 n} \\
+ \; &\left ( \frac{2 v^{d/2}}{u^{d/2} \nu} + \frac{151 C v^{d/2}}{84 u^{d/2} \sigma \nu} + \frac{C v^{d/2} \tau}{2646 J^2 u^{d/2} \sigma^2 \nu n^{1/2}} \right ) \left ( 20 I_\infty (g, \gamma, r) + \frac{(1 + \nu) \tau \gamma^{-d/2} r}{n^{1/2}} \right ).
\end{align*}

Since \eqref{eGaussComp} holds simultaneously for all $\gamma, \eta \in \Gamma$ and all $r, s \in R$ such that $\eta \leq \gamma$ and $s \geq r$, we have
\begin{equation*}
\lVert \hat h_{\hat \gamma \wedge \gamma, \hat r \vee r} - \hat h_{\gamma, r} \rVert_{L_2 (P_n)}^2 \leq 40 I_\infty (g, \gamma, r) + \frac{\tau (\gamma^{-d/2} r + (\hat \gamma \wedge \gamma)^{-d/2} (\hat r \vee r))}{n^{1/2}}.
\end{equation*}
This shows
\begin{equation*}
\lVert V \hat h_{\hat \gamma \wedge \gamma, \hat r \vee r} - V \hat h_{\gamma, r} \rVert_{L_2 (P_n)}^2 \leq 40 I_\infty (g, \gamma, r) + \frac{\tau ( \gamma^{-d/2} r + (\hat \gamma \wedge \gamma)^{-d/2} (\hat r \vee r))}{n^{1/2}},
\end{equation*}
and it follows from Lemma 31, our choice of $t$ and \eqref{eGaussCritBd} that
\begin{align*}
&\lVert V \hat h_{\hat \gamma \wedge \gamma, \hat r \vee r} - V \hat h_{\gamma, r} \rVert_{L_2 (P)}^2 \\
\leq \; &40 I_\infty (g, \gamma, r) + \frac{\tau ( \gamma^{-d/2} r + (\hat \gamma \wedge \gamma)^{-d/2} (\hat r \vee r))}{n^{1/2}} \\
+ \; &\left ( \frac{151 C \tau}{84 \sigma n^{1/2}} + \frac{C \tau^2}{2646 J^2 \sigma^2 n} \right ) (\hat \gamma \wedge \gamma)^{-d/2} (\hat r \vee r) \\
= \; &40 I_\infty (g, \gamma, r) + \frac{\tau \gamma^{-d/2} r}{n^{1/2}} + \left ( \frac{\tau}{n^{1/2}} + \frac{151 C \tau}{84 \sigma n^{1/2}} + \frac{C \tau^2}{2646 J^2 \sigma^2 n} \right ) (\hat \gamma \wedge \gamma)^{-d/2} (\hat r \vee r) \\
\leq \; &40 I_\infty (g, \gamma, r) + \frac{\tau \gamma^{-d/2} r}{n^{1/2}} \\
+ \; &\left ( \frac{\tau}{n^{1/2}} + \frac{151 C \tau}{84 \sigma n^{1/2}} + \frac{C \tau^2}{2646 J^2 \sigma^2 n} \right ) (v/u)^{d/2} (\hat \gamma^{-d/2} \hat r + \gamma^{-d/2} r) \\
\leq \; &40 I_\infty (g, \gamma, r) + \frac{2 v^{d/2} \tau \gamma^{-d/2} r}{u^{d/2} n^{1/2}} + \frac{151 C v^{d/2} \tau \gamma^{-d/2} r}{84 u^{d/2} \sigma n^{1/2}} + \frac{C v^{d/2} \tau^2 \gamma^{-d/2} r}{2646 J^2 u^{d/2} \sigma^2 n} \\
+ \; &\left ( \frac{v^{d/2}}{2 u^{d/2} \nu} + \frac{151 C v^{d/2}}{168 u^{d/2} \sigma \nu} + \frac{C v^{d/2} \tau}{5292 J^2 u^{d/2} \sigma^2 \nu n^{1/2}} \right ) \frac{2 \nu \tau \hat \gamma^{-d/2} \hat r}{n^{1/2}} \\
\leq \; &40 I_\infty (g, \gamma, r) + \frac{2 v^{d/2} \tau \gamma^{-d/2} r}{u^{d/2} n^{1/2}} + \frac{151 C v^{d/2} \tau \gamma^{-d/2} r}{84 u^{d/2} \sigma n^{1/2}} + \frac{C v^{d/2} \tau^2 \gamma^{-d/2} r}{2646 J^2 u^{d/2} \sigma^2 n} \\
+ \; &\left ( \frac{v^{d/2}}{u^{d/2} \nu} + \frac{151 C v^{d/2}}{84 u^{d/2} \sigma \nu} + \frac{C v^{d/2} \tau}{2646 J^2 u^{d/2} \sigma^2 \nu n^{1/2}} \right ) \left ( 20 I_\infty (g, \gamma, r) + \frac{(1 + \nu) \tau \gamma^{-d/2} r}{n^{1/2}} \right ).
\end{align*}
Hence,
\begin{align*}
&\lVert V \hat h_{\hat \gamma, \hat r} - V \hat h_{\gamma, r} \rVert_{L_2 (P)}^2 \\
\leq \; &2 \lVert V \hat h_{\hat \gamma, \hat r} - V \hat h_{\hat \gamma \wedge \gamma, \hat r \vee r} \rVert_{L_2 (P)}^2 + 2 \lVert V h_{\hat \gamma \wedge \gamma, \hat r \vee r} - V \hat h_{\gamma, r} \rVert_{L_2 (P)}^2 \\
\leq \; &160 I_\infty (g, \gamma, r) + \frac{2 v^{d/2} (5 + 2 \nu) \tau \gamma^{-d/2} r}{u^{d/2} n^{1/2}} + \frac{151 C v^{d/2} \tau \gamma^{-d/2} r}{21 u^{d/2} \sigma n^{1/2}} + \frac{2 C v^{d/2} \tau^2 \gamma^{-d/2} r}{1323 J^2 u^{d/2} \sigma^2 n} \\
+ \; &\left ( \frac{6 v^{d/2}}{u^{d/2} \nu} + \frac{151 C v^{d/2}}{21 u^{d/2} \sigma \nu} + \frac{2 C v^{d/2} \tau}{1323 J^2 u^{d/2} \sigma^2 \nu n^{1/2}} \right ) \left ( 20 I_\infty (g, \gamma, r) + \frac{(1 + \nu) \tau \gamma^{-d/2} r}{n^{1/2}} \right ).
\end{align*}
We have
\begin{equation*}
\lVert V \hat h_{\hat \gamma, \hat r} - g \rVert_{L_2 (P)}^2 \leq 2 \lVert V \hat h_{\hat \gamma, \hat r} - V \hat h_{\gamma, r} \rVert_{L_2 (P)}^2 + 2 \lVert V \hat h_{\gamma, r} - g \rVert_{L_2 (P)}^2
\end{equation*}
and the result follows. 
\end{proof}

We assume \ref{g1} to bound the distance between $V \hat h_{\hat \gamma, \hat r}$ and $g$ in the $L^2(P)$ norm and prove Theorem \ref{tGaussLepBound}.

{\bf Proof of Theorem \ref{tGaussLepBound}}
By Theorem \ref{tGaussLep}, we have
\begin{align*}
&\lVert V \hat h_{\hat \gamma, \hat r} - g \rVert_{L^2 (P)}^2 \\
\leq \; &\inf_{\gamma \in \Gamma} \inf_{r \in R} \left ( (1 + D_4 \tau n^{-1/2}) (D_5 \tau \gamma^{-d/2} r n^{-1/2} + D_6 I_\infty (g, \gamma, r)) + 2 \lVert V \hat h_{\gamma, r} - g \rVert_{L^2 (P)}^2 \right )
\end{align*}
for some constants $D_4, D_5, D_6 > 0$ not depending on $\tau$, $\gamma$, $r$ or $n$. By Theorem \ref{tVaryBound}, we have
\begin{align*}
\lVert V \hat h_{\gamma, r} - g \rVert_{L^2 (P)}^2 &\leq \frac{(151 C + 21 \sigma) \tau \gamma^{-d/2} r}{42 \sigma n^{1/2}} + \frac{C \tau^2 \gamma^{-d/2} r}{1323 J^2 \sigma^2 n} + 10 I_\infty (g, \gamma, r) \\
&\leq D_7 \tau \gamma^{-d/2} r n^{-1/2} + D_8 \tau^2 \gamma^{-d/2} r n^{-1} + 10 I_\infty (g, \gamma, r)
\end{align*}
for all $\gamma \in \Gamma$ and all $r \in R$, for some constants $D_7, D_8 > 0$ not depending on $\tau$, $\gamma$, $r$ or $n$. This gives
\begin{align*}
\lVert V \hat h_{\hat \gamma, \hat r} - g \rVert_{L^2 (P)}^2 \leq \inf_{\gamma \in \Gamma} \inf_{r \in R} &\left ( (1 + D_4 \tau n^{-1/2}) (D_5 \tau \gamma^{-d/2} r n^{-1/2} + D_6 I_\infty (g, \gamma, r)) \right . \\
&+ \left . 2 D_7 \tau \gamma^{-d/2} r n^{-1/2} + 2 D_8 \tau^2 \gamma^{-d/2} r n^{-1} + 20 I_\infty (g, \gamma, r) \right ).
\end{align*}
Hence, the result follows with
\begin{equation*}
D_1 = \frac{D_4 D_5 + 2 D_8}{D_5 + 2 D_7}, \; D_2 = D_5 + 2 D_7, \; D_3 = D_6 + 20.
\end{equation*} \hfill \BlackBox

We assume \ref{g3} to prove Theorem \ref{tInterGaussLepBound}.

{\bf Proof of Corollary \ref{tInterGaussLepBound}}
If we assume \ref{R1} and \ref{Gamma1}, then $\alpha \in \Gamma$ and $r = a n^{(1-\beta)/(2 (1+\beta))} \in R$, so
\begin{align*}
\lVert V \hat h_{\hat \gamma, \hat r} - g \rVert_{L^2 (P)}^2 &\leq (1 + D_3 \tau n^{-1/2}) (D_4 \tau \alpha^{-d/2} r n^{-1/2} + D_5 I_\infty (g, \alpha, r)) \\
&\leq (1 + D_3 \tau n^{-1/2}) \left ( D_4 \tau \alpha^{-d/2} a n^{- \beta/(1+\beta)} + \frac{D_5 B^{2/(1-\beta)}}{a^{2 \beta/(1-\beta)} n^{\beta/(1+\beta)}} \right )
\end{align*}
for some constants $D_3, D_4, D_5 > 0$ not depending on $n$ or $\tau$ by Theorem \ref{tGaussLepBound} and \eqref{eGaussApproxInter}. If we assume \ref{R2} and \ref{Gamma2}, then there is a unique $\gamma \in \Gamma$ such that $\alpha/c < \gamma \leq \alpha$ and a unique $r \in R$ such that
\begin{equation*}
a n^{(1-\beta)/(2 (1+\beta))} \leq r < a n^{(1-\beta)/(2 (1+\beta))} + b.
\end{equation*}
By Theorem \ref{tGaussLepBound}, Lemma \ref{lNest} and \eqref{eGaussApproxInter}, we have
\begin{align*}
&\lVert V \hat h_{\hat \gamma, \hat r} - g \rVert_{L^2 (P)}^2 \\
\leq \; &(1 + D_3 \tau n^{-1/2}) (D_4 \tau \gamma^{-d/2} r n^{-1/2} + D_5 I_\infty (g, \gamma, r)) \\
\leq \; &(1 + D_3 \tau n^{-1/2}) \left ( D_4 \tau c^{d/2} \alpha^{-d/2} (a n^{(1-\beta)/(2 (1+\beta))} + b) n^{- 1/2} + \frac{D_5 B^{2/(1-\beta)}}{a^{2 \beta/(1-\beta)} n^{\beta/(1+\beta)}} \right ).
\end{align*}
In either case,
\begin{equation*}
\lVert V \hat h_{\hat \gamma, \hat r} -  g \rVert_{L^2 (P)}^2 \leq D_1 \tau n^{-\beta/(1+\beta)} + D_2 \tau^2 n^{-(1+3\beta)/(2(1+\beta))}
\end{equation*}
for some constants $D_1, D_2 > 0$ not depending on $n$ or $\tau$. \hfill \BlackBox


\bibliographystyle{plainnat}
\bibliography{paper}

\end{document}


\begin{frontmatter}
\title{Supplement to ``The Goldenshluger--Lepski Method for Constrained Least-Squares Estimators over RKHSs''}
\runtitle{Supplement to Lepski for Constrained Estimators over RKHSs}

\begin{aug}
\author{\fnms{Stephen} \snm{Page} \thanksref{a1,e1} \ead[label=e1,mark]{s.page@lancaster.ac.uk}}
\and
\author{\fnms{Steffen} \snm{Gr\"{u}new\"{a}lder} \thanksref{a2} \ead[label=e2]{s.grunewalder@lancaster.ac.uk}}
\runauthor{Stephen Page and Steffen Gr\"{u}new\"{a}lder}
\affiliation{Lancaster University}
\address[a1]{STOR-i, Lancaster University, Lancaster, LA1 4YF, United Kingdom. \printead{e1}}
\address[a2]{Department of Mathematics and Statistics, Lancaster University, Lancaster, LA1 4YF, United Kingdom. \printead{e2}}
\end{aug}
\end{frontmatter}

In this supplement, we provide the proofs of the results for the two regression problems in \cite{page2018goldenshluger}, including the majorants, along with some technical results.

\appendix

\section{Proof of the Regression Results for a Fixed RKHS}

We bound the distance between $\hat h_r$ and $h_r$ in the $L^2(P_n)$ norm for $r \geq 0$ and $h_r \in r B_H$ to prove the following. 

\begin{lemma} \label{lBias}
Assume (Y) and (H). Let $t \geq 1$ and $A_{1, t} \in \mathcal{F}$ be the set on which
\begin{equation*}
\lVert \hat h_r - h_r \rVert_{L^2 (P_n)}^2 \leq \frac{20 \lVert k \rVert_{\diag}^{1/2} \sigma r t^{1/2}}{n^{1/2}} + 4 \lVert h_r - g \rVert_\infty^2
\end{equation*}
simultaneously for all $r \geq 0$ and all $h_r \in r B_H$. We have $\prob(A_{1, t}) \geq 1 - e^{-t}$.
\end{lemma}
\begin{proof}
The result is trivial for $r = 0$. By Lemma 2 of \cite{page2017ivanov}, we have
\begin{equation*}
\lVert \hat h_r -  h_r \rVert_{L^2 (P_n)}^2 \leq \frac{4}{n} \sum_{i = 1}^n (Y_i - g(X_i)) (\hat h_r (X_i) - h_r (X_i)) + 4 \lVert h_r - g \rVert_{L^2 (P_n)}^2
\end{equation*}
for all $r > 0$ and all $h_r \in r B_H$. We now bound the right-hand side. We have
\begin{equation*}
\lVert h_r - g \rVert_{L^2 (P_n)}^2 \leq \lVert h_r - g \rVert_\infty^2.
\end{equation*}
Furthermore,
\begin{align*}
&\frac{1}{n} \sum_{i = 1}^n (Y_i - g(X_i)) (\hat h_r (X_i) - h_r (X_i)) \\
\leq \; &\sup_{f \in 2 r B_H} \left \lvert \frac{1}{n} \sum_{i = 1}^n (Y_i - g(X_i)) f(X_i) \right \rvert \\
= \; &\sup_{f \in 2 r B_H} \left \lvert \left \langle \frac{1}{n} \sum_{i = 1}^n (Y_i - g(X_i)) k_{X_i}, f \right \rangle_H \right \rvert \\
= \; &2 r \left \lVert \frac{1}{n} \sum_{i = 1}^n (Y_i - g(X_i)) k_{X_i} \right \rVert_H \\
= \; &2 r \left ( \frac{1}{n^2} \sum_{i, j = 1}^n (Y_i - g(X_i)) (Y_j - g(X_j)) k(X_i, X_j) \right )^{1/2}
\end{align*}
by the reproducing kernel property and the Cauchy--Schwarz inequality. Let $K$ be the $n \times n$ matrix with $K_{i, j} = k(X_i, X_j)$ and let $\eps$ be the vector of the $Y_i - g(X_i)$. Then
\begin{equation*}
\frac{1}{n^2} \sum_{i, j = 1}^n (Y_i - g(X_i)) (Y_j - g(X_j)) k(X_i, X_j) = \eps^{\trans} (n^{-2} K) \eps.
\end{equation*}
Furthermore, since $k$ is a measurable function on $(S \times S, \mathcal{S} \otimes \mathcal{S})$, we have that $n^{-2} K$ is an $(\real^{n \times n}, \bor(\real^{n \times n}))$-valued measurable matrix on $(\Omega, \mathcal{F})$ and non-negative-definite. Let $a_i$ for $1 \leq i \leq n$ be the eigenvalues of $n^{-2} K$. Then
\begin{equation*}
\max_{1 \leq i \leq n} a_i \leq \tr(n^{-2} K) \leq n^{-1} \lVert k \rVert_{\diag}
\end{equation*}
and
\begin{equation*}
\tr((n^{-2} K)^2) = \lVert a \rVert_2^2 \leq \lVert a \rVert_1^2 \leq n^{-2} \lVert k \rVert_{\diag}^2.
\end{equation*}
Therefore, by Lemma 36 of \cite{page2017ivanov}, we have
\begin{equation*}
\eps^{\trans} (n^{-2} K) \eps \leq \lVert k \rVert_{\diag} \sigma^2 n^{-1} (1 + 2 t + 2 (t^2 + t)^{1/2})
\end{equation*}
and
\begin{equation*}
\frac{1}{n} \sum_{i = 1}^n (Y_i - g(X_i)) (\hat h_r (X_i) - h_r (X_i)) \leq \frac{5 \lVert k \rVert_{\diag}^{1/2} \sigma r t^{1/2}}{n^{1/2}}
\end{equation*}
with probability at least $1 - e^{-t}$. The result follows. 
\end{proof}

The following lemma, which is Lemma 25 of \cite{page2017ivanov}, is useful for proving Lemma \ref{lSwapNorm}.

\begin{lemma} \label{lTal}
Let $D > 0$ and $A \subs L^\infty$ be separable with $\lVert f \rVert_\infty \leq D$ for all $f \in A$. Let
\begin{equation*}
Z = \sup_{f \in A} \left \lvert \lVert f \rVert_{L^2 (P_n)}^2 - \lVert f \rVert_{L^2 (P)}^2 \right \rvert.
\end{equation*}
Then, for $t > 0$, we have
\begin{equation*}
Z \leq \expect(Z) + \left ( \frac{2 D^4 t}{n} + \frac{4 D^2 \expect(Z) t}{n} \right )^{1/2} + \frac{2 D^2 t}{3 n}
\end{equation*}
with probability at least $1 - e^{-t}$.
\end{lemma}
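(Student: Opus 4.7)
The plan is to recognise $Z$ as the supremum of a centred empirical process and to apply a Talagrand--Bousquet concentration inequality. Define
\begin{equation*}
g_f(x) = f(x)^2 - \int f^2 \, dP,
\end{equation*}
so that $\lVert f \rVert_{L^2(P_n)}^2 - \lVert f \rVert_{L^2(P)}^2 = \frac{1}{n} \sum_{i = 1}^n g_f(X_i)$ and hence $n Z = \sup_{f \in A} \lvert \sum_{i = 1}^n g_f(X_i) \rvert$. Each $g_f$ is centred and bounded, and the separability of $A$ in $L^\infty$ lets me restrict the supremum to a countable subfamily, which both resolves the measurability question and permits direct application of standard concentration results.

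Next I would extract the two parameters that govern Bousquet's inequality: a uniform almost-sure bound on $\lvert g_f \rvert$ and a uniform bound on $\vary(g_f)$. Since $0 \leq f^2 \leq D^2$ pointwise and $\expect(f^2) \leq D^2$, the triangle inequality gives $\lVert g_f \rVert_\infty \leq 2 D^2$, while $\vary(g_f) \leq \expect(f^4) \leq D^2 \expect(f^2) \leq D^4$. These supply the ingredients $U = 2 D^2$ and $\sigma^2 = D^4$ needed to plug into Bousquet's bound.

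I would then invoke Bousquet's inequality for a centred, uniformly bounded, countable class: for such a class with $\lVert g \rVert_\infty \leq U$ and $\vary(g) \leq \sigma^2$, the random variable $\tilde Z = \sup_g \lvert \sum_{i = 1}^n g(X_i) \rvert$ satisfies
\begin{equation*}
\tilde Z \leq \expect(\tilde Z) + \sqrt{2 t \left( n \sigma^2 + 2 U \expect(\tilde Z) \right)} + \frac{U t}{3}
\end{equation*}
with probability at least $1 - e^{-t}$. The absolute value inside the definition of $Z$ is handled by applying the inequality to the symmetrised class $\{ \pm g_f : f \in A \}$, which shares the same $U$ and $\sigma^2$. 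Substituting $\tilde Z = n Z$, the values of $U$ and $\sigma^2$ computed above, and then dividing through by $n$ yields a bound of exactly the claimed shape.

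The main obstacle is purely one of bookkeeping: one must choose the precise form of Bousquet's inequality, together with the precise centred versus uncentred $L^\infty$ bound to plug in for the two places $U$ appears, so as to match the stated constants $2 D^4$, $4 D^2 \expect(Z)$ and $2 D^2 / 3$. The conceptual content is a routine application of a Talagrand--Bousquet concentration bound for suprema of bounded empirical processes, and I expect no deeper analytic difficulty beyond this numerical matching.
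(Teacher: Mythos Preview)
The paper does not actually prove this lemma; it is simply quoted as Lemma~25 of \cite{page2017ivanov}, so there is no in-paper argument to compare against. Your approach via Bousquet's version of Talagrand's concentration inequality is precisely the standard route to such a bound and is conceptually correct: the class $\{g_f\}$ is centred, uniformly bounded and (after passing to a countable dense subset of $A$) countable, so the inequality applies directly, and symmetrising to $\{\pm g_f\}$ correctly handles the absolute value.

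One small refinement: since both $f^2$ and $\expect(f^2)$ lie in $[0,D^2]$, you in fact have the sharper bound $\lVert g_f \rVert_\infty \leq D^2$, not just $2D^2$. Depending on which formulation of Bousquet's inequality one invokes---in particular, whether the envelope entering the variance proxy $v = n\sigma^2 + 2U\expect(\tilde Z)$ is taken to be the same constant as the one in the Bernstein term $Ut/3$, or whether one uses the centred bound in one place and the uncentred bound in the other---the displayed constants $2D^4$, $4D^2$ and $2D^2/3$ arise from slightly different choices. As you correctly anticipate, this is pure bookkeeping, and there is no analytic obstruction.
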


We bound the supremum of the difference in the $L^2 (P_n)$ norm and the $L^2 (P)$ norm over $r B_H$ for $r \geq 0$ to prove the next result. We make use of the notion of a \textit{contraction vanishing at $0$}. A function $\varphi:\mathbb{R} \rightarrow \mathbb{R}$ is called a contraction vanishing at $0$ if it is Lipschitz continuous (with Lipschitz constant $1$) and if $\varphi(0) =0 $.

\begin{lemma} \label{lSwapNorm}
Assume (H). Let $t \geq 1$ and $A_{2, t} \in \mathcal{F}$ be the set on which
\begin{equation*}
\sup_{f_1, f_2 \in r B_H} \left \lvert \lVert V f_1 - V f_2 \rVert_{L^2 (P_n)}^2 - \lVert V f_1 - V f_2 \rVert_{L^2 (P)}^2 \right \rvert \leq \frac{97 \lVert k \rVert_{\diag}^{1/2} C r t^{1/2}}{n^{1/2}} + \frac{8 \lVert k \rVert_{\diag}^{1/2} C r t}{3 n}
\end{equation*}
simultaneously for all $r \geq 0$. We have $\prob(A_{2, t}) \geq 1 - e^{-t}$.
\end{lemma}
\begin{proof}
The result is trivial for $r = 0$. Let
\begin{equation*}
Z = \sup_{r > 0} \sup_{f_1, f_2 \in r B_H} \frac{1}{r} \left \lvert \lVert V f_1 - V f_2 \rVert_{L^2 (P_n)}^2 - \lVert V f_1 - V f_2 \rVert_{L^2 (P)}^2 \right \rvert.
\end{equation*}
Furthermore, let the $\eps_i$ for $1 \leq i \leq n$ be \iid Rademacher random variables on $(\Omega, \mathcal{F}, \prob)$, independent of the $X_i$. Lemma 2.3.1 of \cite{van1996weak} shows
\begin{equation*}
\expect(Z) \leq 2 \expect \left ( \sup_{r > 0} \sup_{f_1, f_2 \in r B_H} \left \lvert \frac{1}{n} \sum_{i = 1}^n \eps_i (r^{-1/2} V f_1 (X_i) - r^{-1/2} V f_2 (X_i))^2 \right \rvert \right )
\end{equation*}
by symmetrisation. Since
\begin{equation*}
\lvert V f_1 (X_i) - V f_2 (X_i) \rvert \leq 2 C
\end{equation*}
for all $r > 0$ and all $f_1, f_2 \in r B_H$, we find
\begin{equation*}
\frac{(r^{-1/2} V f_1 (X_i) - r^{-1/2} V f_2 (X_i))^2}{4 C}
\end{equation*}
is a contraction vanishing at 0 as a function of $r^{-1} V f_1 (X_i) - r^{-1} V f_2 (X_i)$ for all $1 \leq i \leq n$. By Theorem 3.2.1 of \cite{gine2015mathematical}, we have
\begin{equation*}
\expect \left (\sup_{r > 0} \sup_{f_1, f_2 \in r B_H} \left \lvert \frac{1}{n} \sum_{i = 1}^n \eps_i \frac{(r^{-1/2} V f_1 (X_i) - r^{-1/2} V f_2 (X_i))^2}{4 C} \right \rvert \; \middle \vert X \right )
\end{equation*}
is at most
\begin{equation*}
2 \expect \left (\sup_{r > 0} \sup_{f_1, f_2 \in r B_H} \left \lvert \frac{1}{n} \sum_{i = 1}^n \eps_i (r^{-1} V f_1 (X_i) - r^{-1} V f_2 (X_i)) \right \rvert \; \middle \vert X \right )
\end{equation*}
almost surely. Therefore,
\begin{align*}
\expect(Z) &\leq16 C \expect \left (\sup_{r > 0} \sup_{f_1, f_2 \in r B_H} \left \lvert \frac{1}{n} \sum_{i = 1}^n \eps_i (r^{-1} V f_1 (X_i) - r^{-1} V f_2 (X_i)) \right \rvert \right ) \\
\leq \; &32 C \expect \left (\sup_{r > 0} \sup_{f \in r B_H} \left \lvert \frac{1}{n} \sum_{i = 1}^n \eps_i r^{-1} V f(X_i) \right \rvert \right )
\end{align*}
by the triangle inequality. Again, by Theorem 3.2.1 of \cite{gine2015mathematical}, we have
\begin{equation*}
\expect(Z) \leq 64 C \expect \left (\sup_{r > 0} \sup_{f \in r B_H} \left \lvert \frac{1}{n} \sum_{i = 1}^n \eps_i r^{-1} f(X_i) \right \rvert \right )
\end{equation*}
since $V$ is a contraction vanishing at 0. We have
\begin{align*}
\sup_{r > 0} \sup_{f \in r B_H} \left \lvert \frac{1}{n} \sum_{i = 1}^n \eps_i r^{-1} f(X_i) \right \rvert &= \sup_{r > 0} \sup_{f \in r B_H} \left \lvert \left \langle \frac{1}{n} \sum_{i = 1}^n \eps_i k_{X_i}, r^{-1} f \right \rangle_H \right \rvert \\
&= \left \lVert \frac{1}{n} \sum_{i = 1}^n \eps_i k_{X_i} \right \rVert_H \\
&= \left ( \frac{1}{n^2} \sum_{i, j = 1}^n \eps_i \eps_j k(X_i, X_j) \right )^{1/2}.
\end{align*}
by the reproducing kernel property and the Cauchy--Schwarz inequality. By Jensen's inequality, we have
\begin{align*}
\expect \left ( \sup_{r > 0} \sup_{f \in r B_H} \left \lvert \frac{1}{n} \sum_{i = 1}^n \eps_i r^{-1} f(X_i) \right \rvert \; \middle \vert X \right ) &\leq \left ( \frac{1}{n^2} \sum_{i, j = 1}^n \cov(\eps_i, \eps_j \vert X ) k(X_i, X_j) \right )^{1/2} \\
&= \left ( \frac{1}{n^2} \sum_{i = 1}^n k(X_i, X_i) \right )^{1/2}
\end{align*}
almost surely and again, by Jensen's inequality, we have
\begin{equation*}
\expect \left ( \sup_{r > 0} \sup_{f \in r B_H} \left \lvert \frac{1}{n} \sum_{i = 1}^n \eps_i r^{-1} f(X_i) \right \rvert \right ) \leq \left ( \frac{\lVert k \rVert_{\diag}}{n} \right )^{1/2}.
\end{equation*}
Hence, $\expect(Z) \leq 64 \lVert k \rVert_{\diag}^{1/2} C n^{-1/2}$.

Let
\begin{equation*}
A = \left \{ r^{-1/2} V f_1 - r^{-1/2} V f_2 : r > 0 \text{ and } f_1, f_2 \in r B_H \right \}.
\end{equation*}
We have that $(0, \infty)$, the set indexing $r$, is separable. Furthermore, $H$ is separable and so is separable in $C(S)$ as it can be continuously embedded in $C(S)$ due to its bounded kernel. Therefore, $r B_H \subs H$ is separable in $C(S)$ for $r > 0$. Hence, we have that $A \subs C(S)$ is separable. Furthermore,
\begin{align*}
\left \lVert r^{-1/2} V f_1 - r^{-1/2} V f_2 \right \rVert_\infty &\leq \min \left ( 2 C r^{-1/2}, 2 \lVert k \rVert_{\diag}^{1/2} r^{1/2} \right ) \\
&\leq 2 \lVert k \rVert_{\diag}^{1/4} C^{1/2}
\end{align*}
for all $r > 0$ and all $f_1, f_2 \in r B_H$. The first term in the minimum comes from clipping using $V$, while the second term comes from the continuous embedding of $H$ in $C(S)$ due to its bounded kernel. By Lemma \ref{lTal}, we have
\begin{equation*}
Z \leq \expect(Z) + \left ( \frac{32 \lVert k \rVert_{\diag} C^2 t}{n} + \frac{16 \lVert k \rVert_{\diag}^{1/2} C \expect(Z) t}{n} \right )^{1/2} + \frac{8 \lVert k \rVert_{\diag}^{1/2} C t}{3 n}
\end{equation*}
with probability at least $1 - e^{-t}$. We have $\expect(Z) \leq 64 \lVert k \rVert_{\diag}^{1/2} C n^{-1/2}$ from above. The result follows. 
\end{proof}

We move the bound on the distance between $V \hat h_r$ and $V h_r$ from the $L^2(P_n)$ norm to the $L^2 (P)$ norm for $r \geq 0$ and $h_r \in r B_H$.

\begin{corollary} \label{cTrueNorm}
Assume (Y) and (H). Let $t \geq 1$ and recall the definitions of $A_{1, t}$ and $A_{2, t}$ from Lemmas \ref{lBias} and \ref{lSwapNorm}. On the set $A_{1, t} \cap A_{2, t} \in \mathcal{F}$, for which $\prob(A_{1, t} \cap A_{2, t}) \geq 1 - 2 e^{-t}$, we have
\begin{equation*}
\lVert V \hat h_r - V h_r \rVert_{L^2 (P)}^2 \leq \frac{\lVert k \rVert_{\diag}^{1/2}(97 C + 20 \sigma)r t^{1/2}}{n^{1/2}} + \frac{8 \lVert k \rVert_{\diag}^{1/2} C r t}{3 n} + 4 \lVert h_r - g \rVert_\infty^2
\end{equation*}
simultaneously for all $r \geq 0$ and all $h_r \in r B_H$.
\end{corollary}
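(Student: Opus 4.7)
The plan is to chain Lemmas \ref{lSwapNorm} and \ref{lBias} together, using that the clipping map $V$ is a $1$-Lipschitz contraction.

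First, I would observe that on $A_{2,t}$, Lemma \ref{lSwapNorm} applied with the choice $f_1 = \hat h_r$ and $f_2 = h_r$ (which is legitimate since $\hat h_r \in r B_H$ by construction and $h_r \in r B_H$ by hypothesis) yields
\begin{equation*}
\lVert V \hat h_r - V h_r \rVert_{L^2(P)}^2 \leq \lVert V \hat h_r - V h_r \rVert_{L^2(P_n)}^2 + \frac{97 \lVert k \rVert_{\diag}^{1/2} C r t^{1/2}}{n^{1/2}} + \frac{8 \lVert k \rVert_{\diag}^{1/2} C r t}{3 n}
\end{equation*}
simultaneously for all $r \geq 0$.

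Next, since $V : \real \to [-C,C]$ is a $1$-Lipschitz contraction, we have the pointwise bound $\lvert V \hat h_r(X_i) - V h_r(X_i) \rvert \leq \lvert \hat h_r(X_i) - h_r(X_i) \rvert$, and therefore $\lVert V \hat h_r - V h_r \rVert_{L^2(P_n)}^2 \leq \lVert \hat h_r - h_r \rVert_{L^2(P_n)}^2$. I would then invoke Lemma \ref{lBias} on $A_{1,t}$ to control the right-hand side by $\frac{20 \lVert k \rVert_{\diag}^{1/2} \sigma r t^{1/2}}{n^{1/2}} + 4 \lVert h_r - g \rVert_\infty^2$ simultaneously over $r \geq 0$ and $h_r \in r B_H$.

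Combining the two displays on $A_{1,t} \cap A_{2,t}$ and collecting the two $t^{1/2}/n^{1/2}$ terms gives the stated coefficient $97C + 20\sigma$, while the $tr/n$ term and the approximation term $4 \lVert h_r - g\rVert_\infty^2$ carry over unchanged. The probability statement $\prob(A_{1,t} \cap A_{2,t}) \geq 1 - 2 e^{-t}$ follows from a union bound applied to the two lemmas. There is no real obstacle here; the only subtlety worth being explicit about is that $\hat h_r, h_r \in r B_H$ so that the supremum in Lemma \ref{lSwapNorm} dominates the specific difference being bounded, and that the contraction property of $V$ preserves the $L^2(P_n)$ inequality coming from Lemma \ref{lBias}.
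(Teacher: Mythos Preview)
Your proof is correct and matches the paper's argument essentially step for step: apply Lemma \ref{lSwapNorm} to the pair $\hat h_r, h_r \in r B_H$, use that $V$ is $1$-Lipschitz to pass from $\lVert V\hat h_r - V h_r\rVert_{L^2(P_n)}^2$ to $\lVert \hat h_r - h_r\rVert_{L^2(P_n)}^2$, and then invoke Lemma \ref{lBias}. The only difference is the order of presentation; the paper applies Lemma \ref{lBias} first and then Lemma \ref{lSwapNorm}, but the content is identical.
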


\begin{proof}
By Lemma \ref{lBias}, we have
\begin{equation*}
\lVert \hat h_r - h_r \rVert_{L^2 (P_n)}^2 \leq \frac{20 \lVert k \rVert_{\diag}^{1/2} \sigma r t^{1/2}}{n^{1/2}} + 4 \lVert h_r - g \rVert_\infty^2
\end{equation*}
for all $r \geq 0$ and all $h_r \in r B_H$, so
\begin{equation*}
\lVert V \hat h_r -  V h_r \rVert_{L^2 (P_n)}^2 \leq \frac{20 \lVert k \rVert_{\diag}^{1/2} \sigma r t^{1/2}}{n^{1/2}} + 4 \lVert h_r - g \rVert_\infty^2.
\end{equation*}
Since $\hat h_r, h_r \in r B_H$, by Lemma \ref{lSwapNorm} we have
\begin{align*}
&\lVert V \hat h_r -  V h_r \rVert_{L^2 (P)}^2 - \lVert V \hat h_r - V h_r \rVert_{L^2 (P_n)}^2 \\
\leq \; &\sup_{f_1, f_2 \in r B_H} \left \lvert \lVert V f_1 - V f_2 \rVert_{L^2 (P_n)}^2 - \lVert V f_1 - V f_2 \rVert_{L^2 (P)}^2 \right \rvert \\
\leq \; &\frac{97 \lVert k \rVert_{\diag}^{1/2} C r t^{1/2}}{n^{1/2}} + \frac{8 \lVert k \rVert_{\diag}^{1/2} C r t}{3 n}.
\end{align*}
The result follows.
\end{proof}

We assume (g1) to bound the distance between $V \hat h_r$ and $g$ in the $L^2(P)$ norm for $r \geq 0$ and prove Theorem 1.

{\bf Proof of Theorem 1}
Note that $V g = g$. We have
\begin{align*}
\lVert V \hat h_r - g \rVert_{L^2 (P)}^2 &\leq \left ( \lVert V \hat h_r - V h_r \rVert_{L^2 (P)} + \lVert V h_r - g \rVert_{L^2 (P)} \right )^2 \\
&\leq 2 \lVert V \hat h_r - V h_r \rVert_{L^2 (P)}^2 + 2 \lVert V h_r - g \rVert_{L^2 (P)}^2 \\
&\leq 2 \lVert V \hat h_r - V h_r \rVert_{L^2 (P)}^2 + 2 \lVert h_r - g \rVert_{L^2 (P)}^2
\end{align*}
for all $r \geq 0$ and all $h_r \in r B_H$. By Corollary \ref{cTrueNorm}, we have
\begin{equation*}
\lVert V \hat h_r - V h_r \rVert_{L^2 (P)}^2 \leq \frac{\lVert k \rVert_{\diag}^{1/2}(97 C + 20 \sigma)r t^{1/2}}{n^{1/2}} + \frac{8 \lVert k \rVert_{\diag}^{1/2} C r t}{3 n} + 4 \lVert h_r - g \rVert_\infty^2.
\end{equation*}
Hence,
\begin{equation*}
\lVert V \hat h_r - g \rVert_{L^2 (P)}^2 \leq \frac{2 \lVert k \rVert_{\diag}^{1/2}(97 C + 20 \sigma)r t^{1/2}}{n^{1/2}} + \frac{16 \lVert k \rVert_{\diag}^{1/2} C r t}{3 n} + 10 \lVert h_r - g \rVert_\infty^2.
\end{equation*}
Taking an infimum over $h_r \in r B_H$ proves the result. \hfill \BlackBox

\section{Proof of the Regression Results for a Collection of RKHSs}

\begin{lemma} \label{lVaryEst}
Assume ($\mathcal{K}1$). We have that $\hat h_{k, r}$ is an $(C(S), \bor(C(S)))$-valued measurable function on $(\Omega \times \mathcal{K} \times [0, \infty), \mathcal{F} \otimes \bor(\mathcal{K}) \otimes \bor([0, \infty)))$, where $k$ varies in $\mathcal{K}$ and $r$ varies in $[0, \infty)$.
\end{lemma}
\begin{proof}
Let $K$ be the $n \times n$ symmetric matrix with $K_{i, j} = k(X_i, X_j)$ for $k \in \mathcal{K}$. Then $K$ is a continuous function of $k$ and $X$, hence it is an $(\real^{n \times n}, \bor(\real^{n \times n}))$-valued measurable matrix on $(\Omega \times \mathcal{K}, \mathcal{F} \otimes \bor(\mathcal{K}))$, where $k$ varies in $\mathcal{K}$. By Lemma \ref{lDiag}, there exist an orthogonal matrix $A$ and a diagonal matrix $D$ which are both $(\real^{n \times n}, \bor(\real^{n \times n}))$-valued measurable matrices on $(\Omega \times \mathcal{K}, \mathcal{F} \otimes \bor(\mathcal{K}))$ such that $K = A D A^{\trans}$. Since $K$ is non-negative definite, the diagonal entries of $D$ are non-negative, and we may assume that they are non-increasing. Let $m = \rk K$, which is measurable on $(\Omega \times \mathcal{K}, \mathcal{F} \otimes \bor(\mathcal{K}))$. For $r > 0$, if
\begin{equation*}
r^2 < \sum_{i = 1}^m D_{i, i}^{-1} (A^{\trans} Y)_i^2,
\end{equation*}
then define $\mu(r) > 0$ by
\begin{equation*}
\sum_{i = 1}^m \frac{D_{i, i}}{(D_{i, i} + n \mu(r))^2} (A^{\trans} Y)_i^2 = r^2.
\end{equation*}
Otherwise, let $\mu(r) = 0$. Let $a \in \real^n$ be defined by
\begin{equation*}
(A^{\trans} a)_i = (D_{i, i} + n \mu(r))^{-1} (A^{\trans} Y)_i
\end{equation*}
for $1 \leq i \leq m$ and $(A^{\trans} a)_i = 0$ for $m+1 \leq i \leq n$, noting that $A^{\trans}$ has the inverse $A$ since it is an orthogonal matrix. By Lemma 3 of \cite{page2017ivanov},
\begin{equation*}
\hat h_{k, r} = \sum_{i = 1}^n a_i k_{X_i}
\end{equation*}
for $r > 0$ and $\hat h_{k, 0} = 0$ for $k \in \mathcal{K}$.

Since $\mu(r) > 0$ is strictly decreasing for
\begin{equation*}
r^2 < \sum_{i = 1}^m D_{i, i}^{-1} (A^{\trans} Y)_i^2
\end{equation*}
and $\mu(r) = 0$ otherwise, we find
\begin{equation*}
\{ \mu(r) \leq \mu \} = \left \{ \sum_{i = 1}^m \frac{D_{i, i}}{(D_{i, i} + n \mu)^2} (A^{\trans} Y)_i^2 \leq r^2 \right \}
\end{equation*}
for $\mu \in [0, \infty)$. Therefore, $\mu(r)$ is measurable on $(\Omega \times \mathcal{K} \times [0, \infty), \mathcal{F} \otimes \bor(\mathcal{K}) \otimes \bor([0, \infty)))$, where $k$ varies in $\mathcal{K}$ and $r$ varies in $[0, \infty)$. Hence, the $a$ above with $\mu = \mu(r)$ for $r > 0$ is measurable on $(\Omega \times \mathcal{K} \times [0, \infty), \mathcal{F} \otimes \bor(\mathcal{K}) \otimes \bor([0, \infty)))$. By Lemma 4.29 of \cite{steinwart2008support}, $\Phi_k : S \to H_k$ by $\Phi_k (x) = k_x$ is continuous for all $k \in \mathcal{K}$. Hence, $\Phi : \mathcal{K} \times S \to C(S)$ by $\Phi(k, x) = k_x$ is continuous and $k_{X_i}$ for $1 \leq i \leq n$ are $(C(S), \bor(C(S)))$-valued measurable functions on $(\Omega \times \mathcal{K}, \mathcal{F} \otimes \bor(\mathcal{K}))$. Together, these show that $\hat h_{k, r}$ is an $(C(S), \bor(C(S)))$-valued measurable function on $(\Omega \times \mathcal{K} \times [0, \infty), \mathcal{F} \otimes \bor(\mathcal{K}) \otimes \bor([0, \infty)))$, where $k$ varies in $\mathcal{K}$ and $r$ varies in $[0, \infty)$, recalling that $\hat h_{k, 0}$ = 0.  
\end{proof}

Let $\psi_1 (x) = \exp( \lvert x \rvert ) - 1$ for $x \in \real$ and
\begin{equation*}
\lVert Z \rVert_{\psi_1} = \inf \{ a \in (0, \infty) : \expect(\psi_1 (Z / a)) \leq 1 \}
\end{equation*}
for any random variable $Z$ on $(\Omega, \mathcal{F})$. Note that this infimum is attained by the monotone convergence theorem, and $\lVert Z \rVert_{\psi_1}$ increases as $\lvert Z \rvert$ increases pointwise. Let $L^{\psi_1}$ be the set of random variables $Z$ on $(\Omega, \mathcal{F}, \prob)$ such that $\lVert Z \rVert_{\psi_1} < \infty$. We have that $(L^{\psi_1}, \lVert \cdot \rVert_{\psi_1})$ is a Banach space known as an Orlicz space (see \cite{rao1991theory}).

\begin{lemma} \label{lPsi1}
Let $Z \in L^{\psi_1}$. We have
\begin{equation*}
\expect(\lvert Z \rvert) \leq (\log 2) \lVert Z \rVert_{\psi_1}.
\end{equation*}
Let $t \geq 0$. We have
\begin{equation*}
\lvert Z \rvert \leq \lVert Z \rVert_{\psi_1} (\log 2 + t)
\end{equation*}
with probability at least $1 - e^{-t}$.
\end{lemma}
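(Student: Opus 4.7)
The plan is to use the definition of $\lVert \cdot \rVert_{\psi_1}$ directly. Since the infimum in the definition is attained by the monotone convergence theorem (as noted in the excerpt), we have
\begin{equation*}
\expect(\psi_1(Z/\lVert Z \rVert_{\psi_1})) = \expect(\exp(\lvert Z \rvert / \lVert Z \rVert_{\psi_1})) - 1 \leq 1,
\end{equation*}
so that $\expect(\exp(\lvert Z \rvert / \lVert Z \rVert_{\psi_1})) \leq 2$. This single inequality underlies both bounds in the statement. The case $\lVert Z \rVert_{\psi_1} = 0$ is handled separately: then $Z = 0$ almost surely, and both bounds are trivial.

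For the first bound, I would apply Jensen's inequality to the convex function $\exp$:
\begin{equation*}
\exp(\expect(\lvert Z \rvert) / \lVert Z \rVert_{\psi_1}) \leq \expect(\exp(\lvert Z \rvert / \lVert Z \rVert_{\psi_1})) \leq 2.
\end{equation*}
Taking logarithms and rearranging gives $\expect(\lvert Z \rvert) \leq (\log 2) \lVert Z \rVert_{\psi_1}$, as required.

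For the tail bound, I would apply Markov's inequality in Chernoff form. Since $\exp$ is strictly increasing,
\begin{equation*}
\prob(\lvert Z \rvert \geq \lVert Z \rVert_{\psi_1} (\log 2 + t)) = \prob(\exp(\lvert Z \rvert / \lVert Z \rVert_{\psi_1}) \geq 2 e^t) \leq \frac{\expect(\exp(\lvert Z \rvert / \lVert Z \rVert_{\psi_1}))}{2 e^t} \leq e^{-t}.
\end{equation*}
Taking complements yields the stated high-probability bound.

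There is no real obstacle here: the argument is a direct, classical consequence of the definition of the Orlicz norm and requires only Jensen's and Markov's inequalities. The only mild care needed is the degenerate case $\lVert Z \rVert_{\psi_1} = 0$, and verifying that the infimum in the definition is attained, which the excerpt already records.
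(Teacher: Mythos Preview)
Your proof is correct and matches the paper's approach exactly: the paper simply records $\expect(\exp(\lvert Z \rvert / \lVert Z \rVert_{\psi_1})) \leq 2$, then cites Jensen's inequality for the first bound and Chernoff bounding for the second. You have faithfully filled in those details, with the added care of handling the degenerate case $\lVert Z \rVert_{\psi_1} = 0$.
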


\begin{proof}
We have $\expect(\exp (\lvert Z \rvert / \lVert Z \rVert_{\psi_1})) \leq 2$. The first result follows from Jensen's inequality. The second result follows from Chernoff bounding.
\end{proof}
For $m \times n$ matrices $U$ and $V$, define $U \circ V$ to be the $m \times n$ matrix with
\begin{equation*}
(U \circ V)_{i, j} = U_{i, j} V_{i, j}.
\end{equation*}
Recall that
\begin{align*}
\mathcal{L} &= \{ k / \lVert k \rVert_{\diag} : k \in \mathcal{K} \} \cup \{0\}, \\
D &= \sup_{f_1, f_2 \in \mathcal{L}} \lVert f_1 - f_2 \rVert_\infty \leq 2, \\
J &= \left ( 162 \int_{0}^{D/2} \log(2 N(a, \mathcal{L}, \lVert \cdot \rVert_\infty)) da + 1 \right )^{1/2}.
\end{align*}
The following lemma is useful for proving Lemma \ref{lVaryBias}.

\begin{lemma} \label{lKernChain}
Assume ($\mathcal{K}1$). Let the $\eps_i$ for $1 \leq i \leq n$ be random variables on $(\Omega, \mathcal{F}, \prob)$ such that $(X_i, \eps_i)$ are \iid  and $\eps_i$ is $\sigma^2$-subgaussian given $X_i$. Let
\begin{equation*}
W(f) = \frac{1}{n^2} \sum_{i, j = 1}^n \eps_i \eps_j f(X_i, X_j)
\end{equation*}
for $f \in \mathcal{L}$. We have
\begin{equation*}
\left \lVert \sup_{f \in \mathcal{L}} W(f) \right \rVert_{\psi_1} \leq \frac{4 J^2 \sigma^2}{n}.
\end{equation*}
\end{lemma}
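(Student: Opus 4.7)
The plan is to condition on $X$, recognise $W(f) = \eps^\trans A_f \eps$ as a quadratic form in the subgaussian vector $\eps$, apply the Hanson--Wright type inequality of Lemma 36 of \cite{page2017ivanov} to control increments of $W$, and then run a chaining argument on $(\mathcal{L}, \lVert \cdot \rVert_\infty)$ rooted at $0 \in \mathcal{L}$ to convert the integral of covering numbers into a bound on the $\psi_1$ norm of the supremum.

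Conditionally on $X$, define the symmetric matrix $A_f$ with entries $(A_f)_{ij} = f(X_i, X_j) / n^2$, so $W(f) = \eps^\trans A_f \eps$. For any $f, g \in \mathcal{L}$ one has $\lVert A_f - A_g \rVert_{\mathrm{F}} \leq \lVert f - g \rVert_\infty / n$ and $\lVert A_f - A_g \rVert_{\mathrm{op}} \leq \lVert A_f - A_g \rVert_{\mathrm{F}} \leq \lVert f - g \rVert_\infty / n$, and $\lvert \tr(A_f - A_g) \rvert \leq \lVert f - g \rVert_\infty / n$. Applying Lemma 36 of \cite{page2017ivanov} to $\eps^\trans (A_f - A_g) \eps$ produces both a subgaussian and a subexponential tail component of scale $\sigma^2 \lVert f - g \rVert_\infty / n$, which via Lemma \ref{lPsi1} translates into a conditional increment bound
\begin{equation*}
\lVert W(f) - W(g) \mid X \rVert_{\psi_1} \leq C_1 \sigma^2 \lVert f - g \rVert_\infty / n
\end{equation*}
for an absolute constant $C_1 > 0$.

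Next, since $0 \in \mathcal{L}$ satisfies $W(0) = 0$, I take $0$ as the root of all chains. Using separability of $(\mathcal{L}, \lVert \cdot \rVert_\infty)$ and the fact that the diameter is at most $D$, choose for each integer $k \geq 0$ a minimal $(D/2) 2^{-k}$-net $\mathcal{L}_k$ of $\mathcal{L}$ with $\mathcal{L}_0 = \{0\}$, and let $\pi_k(f)$ be a nearest point in $\mathcal{L}_k$. Telescoping $W(f) = \sum_{k \geq 0}(W(\pi_{k+1}(f)) - W(\pi_k(f)))$ converges in $L^{\psi_1}$ by continuity of $W$ in the $\lVert \cdot \rVert_\infty$ metric. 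Applying an Orlicz maximal inequality for $\psi_1$ at each dyadic level, together with the increment bound above, produces
\begin{equation*}
\left\lVert \sup_{f \in \mathcal{L}} W(f) \; \middle \vert \; X \right\rVert_{\psi_1} \leq \frac{C_2 \sigma^2}{n} \sum_{k \geq 0} \log\bigl(2 N((D/2) 2^{-k}, \mathcal{L}, \lVert \cdot \rVert_\infty)\bigr) \cdot (D/2) 2^{-k},
\end{equation*}
and summing the geometric series bounds this by a constant multiple of $(\sigma^2/n) \int_0^{D/2} \log(2 N(a, \mathcal{L}, \lVert \cdot \rVert_\infty)) \, da$. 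Since the right-hand side is deterministic in $X$, the same bound holds unconditionally.

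The main obstacle is nailing down the absolute constants so that the bound is precisely $4 J^2 \sigma^2 / n$. The coefficient $162$ in $J^2$ must arise as the product of the Hanson--Wright constant extracted from Lemma 36 of \cite{page2017ivanov}, the constant in Lemma \ref{lPsi1} relating exponential tails to $\psi_1$ norm, and the dyadic chaining factor; the $+1$ inside $J^2$ accounts for the root level $\mathcal{L}_0 = \{0\}$ and absorbs rounding in the maximal inequalities so that the bound is meaningful even when the covering integral vanishes (e.g.\ when $\mathcal{L}$ is finite). Careful bookkeeping through these three ingredients should yield the stated inequality.
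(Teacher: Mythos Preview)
Your overall strategy---chaining in the $\psi_1$ norm over $(\mathcal{L},\lVert\cdot\rVert_\infty)$ rooted at $0$---is the same as the paper's. However, two pieces of the execution are off.

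First, the increment bound is not obtained the way you describe. Lemma~36 of \cite{page2017ivanov}, as used elsewhere in the paper, gives a one-sided high-probability upper bound on $\eps^\trans M\eps$ for a non-negative-definite $M$; the difference $A_f-A_g$ need not be non-negative-definite, so that lemma does not directly apply to increments. Moreover, Lemma~\ref{lPsi1} converts a $\psi_1$ bound into tail and expectation bounds, not the other way round, so it cannot be used to manufacture a $\psi_1$ increment bound from a tail inequality. The paper instead splits $W(f)=Z(f)+n^{-2}\eps^\trans(I\circ F)\eps$, where $Z(f)=n^{-2}\eps^\trans(F-I\circ F)\eps$ is the off-diagonal part, and invokes Lemma~\ref{lQuadPsi1} (a Hanson--Wright-type result giving a $\psi_1$ bound directly) to obtain $\lVert Z(f_1)-Z(f_2)\rVert_{\psi_1}\leq 36\sigma^2 n^{-1}\lVert f_1-f_2\rVert_\infty$. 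The chaining (Lemma~\ref{lChainPsi1}) is then run on $Z$, producing the $648\sigma^2 n^{-1}\int_0^{D/2}\log(2N(a,\mathcal{L},\lVert\cdot\rVert_\infty))\,da$ contribution.

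Second, your explanation of the $+1$ in $J^2$ is wrong: it does not arise from the root level or from rounding in the maximal inequality. It comes from the diagonal part, which satisfies $n^{-2}\sup_{f\in\mathcal{L}}\eps^\trans(I\circ F)\eps\leq n^{-2}\eps^\trans\eps$ because $f(x,x)\in[0,1]$ for $f\in\mathcal{L}$. A direct moment-generating-function calculation (comparing to chi-squared and using $(1-x)^{-1/2}\leq e^x$ on $[0,1/2]$) gives $\lVert n^{-2}\eps^\trans\eps\rVert_{\psi_1}\leq 4\sigma^2/n$. Adding this to the off-diagonal chaining bound yields $4(162\int_0^{D/2}\log(2N)\,da+1)\sigma^2/n=4J^2\sigma^2/n$. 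Without separating the diagonal, the increments $W(f)-W(g)$ are not mean-zero in general (the diagonal contributes $\sum_i\eps_i^2(f(X_i,X_i)-g(X_i,X_i))/n^2$), and the constants will not line up as cleanly as you hope.
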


\begin{proof}
Let $F$ be the $n \times n$ matrix with $F_{i, j} = f(X_i, X_j)$, where $F$ varies with $f \in \mathcal{L}$. Note that $F$ is an $(\real^{n \times n}, \bor(\real^{n \times n}))$-valued measurable matrix on $(\Omega, \mathcal{F})$. Then $W(f) = n^{-2} \eps^{\trans} F \eps$. Let $Z(f) = n^{-2} \eps^{\trans} (F - I \circ F ) \eps$ for $f \in \mathcal{L}$. Note that $Z$ is continuous in $f$. We have
\begin{equation*}
\left \lVert Z(f_1) - Z(f_2) \right \rVert_{\psi_1} \leq 36 \sigma^2 n^{-1} \lVert f_1 - f_2 \rVert_\infty
\end{equation*}
for $f_1, f_2 \in \mathcal{L}$: let $F^{(1)}$ and $F^{(2)}$ be the  matrices corresponding to $f_1$ and $f_2$. Observe that $\left \lVert Z(f_1) - Z(f_2) \right \rVert_{\psi_1} \leq a$ by Lemma \ref{lQuadPsi1} when $a$ is chosen such that
\[
n^2 a = 2^{7/2} (\log 2) \sigma^2 (\tr( F^{(1)} - F^{(2)})^2)^{1/2} /\log(5/4)  \leq 36 \sigma^2 n \|f_1 - f_2\|_\infty.  
\]
 Let $d(f_1, f_2) = 36 \sigma^2 n^{-1} \lVert f_1 - f_2 \rVert_\infty$ for $f_1, f_2 \in \mathcal{L}$ and
\begin{equation*}
D_d = \sup_{f_1, f_2 \in \mathcal{L}} d(f_1, f_2).
\end{equation*}
By Lemma \ref{lChainPsi1} with $M = \mathcal{L}$ and $s_0 = 0$, we find
\begin{align*}
\left \lVert \sup_{f \in \mathcal{L}} \lvert Z(f) \rvert \right \rVert_{\psi_1} &\leq 18 \int_{0}^{D_d/2} \log(2 N(a, \mathcal{L}, d)) da \\
&= \frac{648 \sigma^2}{n} \int_{0}^{D/2} \log(2 N(a, \mathcal{L}, \lVert \cdot \rVert_\infty)) da.
\end{align*}
Hence,
\begin{equation*}
\left \lVert \sup_{f \in \mathcal{L}} W(f) \right \rVert_{\psi_1} \leq \left \lVert n^{-2} \sup_{f \in \mathcal{L}} \eps^{\trans} (I \circ F) \eps \right \rVert_{\psi_1} + \frac{648 \sigma^2}{n} \int_{0}^{D/2} \log(2 N(a, \mathcal{L}, \lVert \cdot \rVert_\infty)) da.
\end{equation*}

We have
\begin{equation*}
n^{-2} \sup_{f \in \mathcal{L}} \eps^{\trans} (I \circ F) \eps \leq n^{-2} \eps^{\trans} \eps,
\end{equation*}
noting that $F_{i, i} \in [0, 1]$ for $1 \leq i \leq n$ and $f \in \mathcal{L}$. Let $\delta_i$ for $1 \leq i \leq n$ be random variables on $(\Omega, \mathcal{F}, \prob)$ which are independent of each other and the $\eps_i$, with $\delta_i \sim \norm(0, \sigma^2)$. Lemma 35 of \cite{page2017ivanov} shows
\begin{align*}
\expect \left ( \exp \left ( n^{-2} t \sup_{f \in \mathcal{L}} \eps^{\trans} (I \circ F) \eps \right ) \right ) &\leq \expect \left ( \exp \left ( n^{-2} t \eps^{\trans} \eps \right ) \right ) \\
&\leq \expect \left ( \exp \left ( n^{-2} t \delta^{\trans} \delta \right ) \right ) \\
&= \prod_{i = 1}^n \left (1 - 2 \sigma^2 n^{-2} t \right )^{-1/2}
\end{align*}
for $0 \leq 2 \sigma^2 n^{-2} t < 1$ by computing the moment generating function of the $\delta_i^2$. We have that $(1 - x)^{- 1/2} \leq \exp(x)$ for $x \in [0, 1/2]$, so
\begin{equation*}
\expect \left ( \exp \left ( n^{-2} t \sup_{f \in \mathcal{L}} \eps^{\trans} (I \circ F) \eps \right ) \right ) \leq \prod_{i = 1}^n \exp \left ( 2 \sigma^2 n^{-2} t \right ) = \exp \left ( 2 \sigma^2 n^{-1} t \right )
\end{equation*}
for $0 \leq 4 \sigma^2 n^{-2} t \leq 1$. This bound is at most 2 and valid for
\begin{equation*}
t \leq \min \left ( \frac{n^2}{4 \sigma^2}, \frac{(\log 2) n}{2 \sigma^2} \right ).
\end{equation*}
Hence,
\begin{equation*}
\left \lVert n^{-2} \sup_{f \in \mathcal{L}} \eps^{\trans} (I \circ F) \eps \right \rVert_{\psi_1} \leq \max \left ( \frac{4 \sigma^2}{n^2}, \frac{2 \sigma^2}{(\log 2) n} \right ) \leq \frac{4 \sigma^2}{n}
\end{equation*}
and
\begin{equation*}
\left \lVert \sup_{f \in \mathcal{L}} W(f) \right \rVert_{\psi_1} \leq \frac{648 \sigma^2}{n} \int_{0}^{D/2} \log(2 N(a, \mathcal{L}, \lVert \cdot \rVert_\infty)) da + \frac{4 \sigma^2}{n}.
\end{equation*}
The result follows.
\end{proof}
We bound the distance between $\hat h_{k, r}$ and $h_{k, r}$ in the $L^2(P_n)$ norm for $k \in \mathcal{K}$, $r \geq 0$ and $h_{k, r} \in r B_k$ to prove the following Lemma.

\begin{lemma} \label{lVaryBias}
Assume (Y) and ($\mathcal{K}1$). Let $t \geq 1$. There exists a set $A_{3, t} \in \mathcal{F}$ with $\prob(A_{3, t}) \geq 1 - e^{-t}$ on which
\begin{equation*}
\lVert \hat h_{k, r} - h_{k, r} \rVert_{L^2 (P_n)}^2 \leq \frac{21 J \lVert k \rVert_{\diag}^{1/2} \sigma r t^{1/2}}{n^{1/2}} + 4 \lVert h_{k, r} - g \rVert_\infty^2
\end{equation*}
simultaneously for all $k \in \mathcal{K}$, all $r \geq 0$ and all $h_{k, r} \in r B_k$.
\end{lemma}
\begin{proof}
The result is trivial for $r = 0$. By Lemma 2 of \cite{page2017ivanov}, we have
\begin{equation*}
\lVert \hat h_{k, r} -  h_{k, r} \rVert_{L^2 (P_n)}^2 \leq \frac{4}{n} \sum_{i = 1}^n (Y_i - g(X_i)) (\hat h_{k, r} (X_i) - h_{k, r} (X_i)) + 4 \lVert h_{k, r} - g \rVert_{L^2 (P_n)}^2
\end{equation*}
for all $k \in \mathcal{K}$, all $r > 0$ and all $h_{k, r} \in r B_k$. We now bound the right-hand side. We have
\begin{equation*}
\lVert h_{k, r} - g \rVert_{L^2 (P_n)}^2 \leq \lVert h_{k, r} - g \rVert_\infty^2.
\end{equation*}
Furthermore,
\begin{align*}
&\frac{1}{n} \sum_{i = 1}^n (Y_i - g(X_i)) (\hat h_{k, r} (X_i) - h_{k, r} (X_i)) \\
\leq \; &\sup_{f \in 2 r B_k} \left \lvert \frac{1}{n} \sum_{i = 1}^n (Y_i - g(X_i)) f(X_i) \right \rvert \\
= \; &\sup_{f \in 2 r B_k} \left \lvert \left \langle \frac{1}{n} \sum_{i = 1}^n (Y_i - g(X_i)) k_{X_i}, f \right \rangle_k \right \rvert \\
= \; &2 r \left \lVert \frac{1}{n} \sum_{i = 1}^n (Y_i - g(X_i)) k_{X_i} \right \rVert_k \\
= \; &2 r \left ( \frac{1}{n^2} \sum_{i, j = 1}^n (Y_i - g(X_i)) (Y_j - g(X_j)) k(X_i, X_j) \right )^{1/2}
\end{align*}
by the reproducing kernel property and the Cauchy--Schwarz inequality. Let
\begin{equation*}
Z = \sup_{k \in \mathcal{K}} \left ( \frac{1}{\lVert k \rVert_{\diag} n^2} \sum_{i, j = 1}^n (Y_i - g(X_i)) (Y_j - g(X_j)) k(X_i, X_j) \right ).
\end{equation*}
By Lemma \ref{lKernChain} with $\eps_i = Y_i - g(X_i)$, we have $\lVert Z \rVert_{\psi_1} \leq 4 J^2 \sigma^2 n^{-1}$. By Lemma \ref{lPsi1}, we have $Z \leq 4 J^2 \sigma^2 (\log 2 + t) n^{-1}$ with probability at least $1 - e^{-t}$. The result follows. 
\end{proof}

The following lemma is useful for proving Lemma \ref{lVarySwapNorm}.

\begin{lemma} \label{lSep}
Let
\begin{equation*}
A = \left \{ \lVert k \rVert_{\diag}^{-1/4} r^{-1/2} V f_1 - \lVert k \rVert_{\diag}^{-1/4} r^{-1/2} V f_2 : k \in \mathcal{K}, r > 0 \text{ and } f_1, f_2 \in r B_k \right \}.
\end{equation*}
Then $A$ is separable as a subset of $C(S)$.
\end{lemma}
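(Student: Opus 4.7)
The plan is to exhibit an explicit countable subset $A_0 \subseteq A$ that is dense in $A$ under the $\lVert \cdot \rVert_\infty$ norm. Using assumption \ref{K1}, fix countable dense subsets $\mathcal{K}_0 \subseteq \mathcal{K}$ (with respect to $\lVert \cdot \rVert_\infty$) and $S_0 \subseteq S$ (in the topology of $S$). Define $A_0$ to consist of all functions of the form
\[
\lVert k_0 \rVert_{\diag}^{-1/4} q^{-1/2} (V g_1 - V g_2),
\]
where $k_0 \in \mathcal{K}_0$, $q \in \rat \cap (0, \infty)$, and each $g_j = \sum_{i = 1}^{m_j} q_{j, i} k_0(\cdot, x_{0, j, i})$ is a finite $\rat$-linear combination of feature maps of $k_0$ at points of $S_0$, subject to $\lVert g_j \rVert_{k_0} \leq q$ for $j = 1, 2$. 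The set $A_0$ is countable by construction and each element lies in $A$ (take $k = k_0$, $r = q$, $f_j = g_j \in q B_{k_0}$).

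For density, fix $h = \lVert k \rVert_{\diag}^{-1/4} r^{-1/2} (V f_1 - V f_2) \in A$ and $\eps > 0$; I may assume $\lVert k \rVert_{\diag} > 0$, since otherwise $H_k = \{0\}$ and the scaling is not well-defined. Because $\spn\{k(\cdot, x) : x \in S\}$ is dense in $H_k$, I first approximate each $f_j$ in the $H_k$ norm by $\tilde f_j = \sum_{i = 1}^m a_{j, i} k(\cdot, x_{j, i})$, tolerating an arbitrarily small $H_k$-norm excess over $\lVert f_j \rVert_k \leq r$. I then perturb the representation by selecting $k_0 \in \mathcal{K}_0$ with $\lVert k_0 - k \rVert_\infty$ small, points $x_{0, j, i} \in S_0$ with $\lVert k_{x_{0, j, i}} - k_{x_{j, i}} \rVert_k$ small (possible since joint continuity of $k$ on $S \times S$ yields $\lVert k_{x'} - k_x \rVert_k^2 = k(x', x') + k(x, x) - 2 k(x', x) \to 0$ as $x' \to x$), and rationals $q_{j, i} \in \rat$ close to $a_{j, i}$, setting $g_j = \sum_i q_{j, i} k_0(\cdot, x_{0, j, i})$.

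Two estimates then finish the argument. First, the termwise bound $\lvert k_0(y, x_{0, j, i}) - k(y, x_{j, i}) \rvert \leq \lVert k_0 - k \rVert_\infty + \lVert k \rVert_{\diag}^{1/2} \lVert k_{x_{0, j, i}} - k_{x_{j, i}} \rVert_k$ (the second piece via the reproducing property and Cauchy--Schwarz) combined with the rational approximation of the $a_{j, i}$ makes $\lVert g_j - \tilde f_j \rVert_\infty$, and hence $\lVert f_j - g_j \rVert_\infty$, arbitrarily small. Second, $\lVert g_j \rVert_{k_0}^2 = \sum_{i, i'} q_{j, i} q_{j, i'} k_0(x_{0, j, i}, x_{0, j, i'})$ converges to $\lVert \tilde f_j \rVert_k^2$ as the perturbations tighten, and this limit is at most $r^2$ up to an arbitrarily small excess, so a rational $q$ slightly larger than $r$ both satisfies $g_j \in q B_{k_0}$ and keeps $\lVert k_0 \rVert_{\diag}^{-1/4} q^{-1/2}$ close to $\lVert k \rVert_{\diag}^{-1/4} r^{-1/2}$ (using $\lvert \lVert k_0 \rVert_{\diag} - \lVert k \rVert_{\diag} \rvert \leq \lVert k_0 - k \rVert_\infty$). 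Combining these with the 1-Lipschitz property of $V$ and the triangle inequality yields $\lVert h - h' \rVert_\infty < \eps$ for the corresponding $h' \in A_0$. The main obstacle is coordinating the norm constraint $\lVert g_j \rVert_{k_0} \leq q$ across the simultaneous perturbations of $k$, the $x_{j, i}$, and the $a_{j, i}$, since the $H_{k_0}$-norm of $g_j$ must converge to the $H_k$-norm of $\tilde f_j$ while being dominated by a single rational $q$ that stays close to $r$.
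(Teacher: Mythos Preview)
Your proposal is correct and follows essentially the same approach as the paper: both construct the countable dense subset from finite $\rat$-linear combinations of feature maps $k_0(\cdot, x)$ with $k_0$ in a countable dense $\mathcal{K}_0$, $x$ in a countable dense $S_0$, and rational radii, then use continuity of the feature map and $\lVert \cdot \rVert_\infty$-closeness of kernels to pass to the limit. The paper organises the argument slightly more modularly (first fixing $k$ and $r$ and showing $D_{k,r}$ is $L^\infty$-dense in $rB_k$, then passing to countable $\mathcal{K}_0$ and rational $r$), whereas you perturb all parameters simultaneously, but the content is the same.
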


\begin{proof}
By Theorem 4.21 of \cite{steinwart2008support}, we have that
\begin{equation*}
\left \{ \sum_{i = 1}^m a_i k_{s_i} : m \geq 1 \text{ and } a_i \in \real, s_i \in S \text{ for } 1 \leq i \leq m \right \}
\end{equation*}
is dense in $H_k$ for $k \in \mathcal{K}$. Hence,
\begin{equation*}
\left \{ \sum_{i = 1}^m a_i k_{s_i} : m \geq 1 \text{ and } a_i \in \real, s_i \in S \text{ for } 1 \leq i \leq m \text{ with} \sum_{i, j = 1}^m a_i a_j k(s_i, s_j) \leq r^2 \right \}
\end{equation*}
is dense in $r B_k \subs H_k$ for $k \in \mathcal{K}$ and $r > 0$. Since $S$ is separable, it has a countable dense subset $S_0$. Let $D_{k, r}$ be
\begin{equation*}
\left \{ \sum_{i = 1}^m a_i k_{s_i} : m \geq 1 \text{ and } a_i \in \rat, s_i \in S_0 \text{ for } 1 \leq i \leq m \text{ with} \sum_{i, j = 1}^m a_i a_j k(s_i, s_j) \leq r^2 \right \}
\end{equation*}
for $k \in \mathcal{K}$ and $r > 0$. Since the function $\Phi_k : S \to H_k$ by $\Phi_k (x) = k_x$ is continuous by Lemma 4.29 of \cite{steinwart2008support}, we have that $D_{k, r}$ is dense in $r B_k \subs H_k$ by suitable choices for $a_i \in \rat$ for $1 \leq i \leq m$. Since $k$ is bounded for all $k \in \mathcal{K}$, as subsets of $C(S)$ we have that $D_{k, r}$ is dense in $r B_k$ and
\begin{equation*}
A = \cl \left ( \left \{ \lVert k \rVert_{\diag}^{-1/4} r^{-1/2} (V f_1 - V f_2) : k \in \mathcal{K}, r > 0 \text{ and } f_1, f_2 \in D_{k, r} \right \} \right ).
\end{equation*}
Since $(\mathcal{K}, \lVert \cdot \rVert_\infty)$ is separable, it has a countable dense subset $\mathcal{K}_0$. Hence,
\begin{equation*}
A = \cl \left ( \left \{ \lVert k \rVert_{\diag}^{-1/4} r^{-1/2} (V f_1 - V f_2) : k \in \mathcal{K}_0, r \in (0, \infty) \cap \rat \text{ and } f_1, f_2 \in D_{k, r} \right \} \right )
\end{equation*}
by suitable choices for $r \in (0, \infty) \cap \rat$. The result follows.
\end{proof}
We bound the supremum of the difference in the $L^2 (P_n)$ norm and the $L^2 (P)$ norm over $r B_k$ for $k \in \mathcal{K}$ and $r \geq 0$ to prove the following Lemma.

\begin{lemma} \label{lVarySwapNorm}
Assume ($\mathcal{K}1$). Let $t \geq 1$ and $A_{4, t} \in \mathcal{F}$ be the set on which
\begin{equation*}
\sup_{f_1, f_2 \in r B_k} \left \lvert \lVert V f_1 - V f_2 \rVert_{L^2 (P_n)}^2 - \lVert V f_1 - V f_2 \rVert_{L^2 (P)}^2 \right \rvert \leq \frac{151 J \lVert k \rVert_{\diag}^{1/2} C r t^{1/2}}{n^{1/2}} + \frac{8 \lVert k \rVert_{\diag}^{1/2} C r t}{3 n}
\end{equation*}
simultaneously for all $k \in \mathcal{K}$ and all $r \geq 0$. We have $\prob(A_{4, t}) \geq 1 - e^{-t}$.
\end{lemma}
\begin{proof}
The result is trivial for $r = 0$. Let
\begin{equation*}
Z = \sup_{k \in \mathcal{K}} \sup_{r > 0} \sup_{f_1, f_2 \in r B_k} \lVert k \rVert_{\diag}^{-1/2} r^{-1} \left \lvert \lVert V f_1 - V f_2 \rVert_{L^2 (P_n)}^2 - \lVert V f_1 - V f_2 \rVert_{L^2 (P)}^2 \right \rvert.
\end{equation*}
We have that $Z$ is a random variable by Lemma \ref{lSep}. Furthermore, let the $\eps_i$ for $1 \leq i \leq n$ be \iid Rademacher random variables on $(\Omega, \mathcal{F}, \prob)$, independent of the $X_i$. Lemma 2.3.1 of \cite{van1996weak} shows
\begin{equation*}
\expect(Z) \leq 2 \expect \left ( \sup_{k \in \mathcal{K}} \sup_{r > 0} \sup_{f_1, f_2 \in r B_k} \lVert k \rVert_{\diag}^{-1/2} \left \lvert \frac{1}{n} \sum_{i = 1}^n \eps_i (r^{-1/2} V f_1 (X_i) - r^{-1/2} V f_2 (X_i))^2 \right \rvert \right )
\end{equation*}
by symmetrisation. Since
\begin{equation*}
\lvert V f_1 (X_i) - V f_2 (X_i) \rvert \leq 2 C
\end{equation*}
for all $k \in \mathcal{K}$, all $r > 0$ and all $f_1, f_2 \in r B_k$, we find
\begin{equation*}
\frac{(r^{-1/2} V f_1 (X_i) - r^{-1/2} V f_2 (X_i))^2}{4 C}
\end{equation*}
is a contraction vanishing at 0 as a function of $r^{-1} V f_1 (X_i) - r^{-1} V f_2 (X_i)$ for all $1 \leq i \leq n$. By Theorem 3.2.1 of \cite{gine2015mathematical}, we have
\begin{equation*}
\expect \left (\sup_{k \in \mathcal{K}} \sup_{r > 0} \sup_{f_1, f_2 \in r B_k} \lVert k \rVert_{\diag}^{-1/2} \left \lvert \frac{1}{n} \sum_{i = 1}^n \eps_i \frac{(r^{-1/2} V f_1 (X_i) - r^{-1/2} V f_2 (X_i))^2}{4 C} \right \rvert \; \middle \vert X \right )
\end{equation*}
is at most
\begin{equation*}
2 \expect \left (\sup_{k \in \mathcal{K}} \sup_{r > 0} \sup_{f_1, f_2 \in r B_k} \lVert k \rVert_{\diag}^{-1/2} \left \lvert \frac{1}{n} \sum_{i = 1}^n \eps_i (r^{-1} V f_1 (X_i) - r^{-1} V f_2 (X_i)) \right \rvert \; \middle \vert X \right )
\end{equation*}
almost surely. Therefore,
\begin{align*}
\expect(Z) &\leq 16 C \expect \left (\sup_{k \in \mathcal{K}} \sup_{r > 0} \sup_{f_1, f_2 \in r B_k} \lVert k \rVert_{\diag}^{-1/2} \left \lvert \frac{1}{n} \sum_{i = 1}^n \eps_i (r^{-1} V f_1 (X_i) - r^{-1} V f_2 (X_i)) \right \rvert \right ) \\
&\leq 32 C \expect \left (\sup_{k \in \mathcal{K}} \sup_{r > 0} \sup_{f \in r B_k} \lVert k \rVert_{\diag}^{-1/2} \left \lvert \frac{1}{n} \sum_{i = 1}^n \eps_i r^{-1} V f(X_i) \right \rvert \right )
\end{align*}
by the triangle inequality. Again, by Theorem 3.2.1 of \cite{gine2015mathematical}, we have
\begin{equation*}
\expect(Z) \leq 64 C \expect \left (\sup_{k \in \mathcal{K}} \sup_{r > 0} \sup_{f \in r B_k} \lVert k \rVert_{\diag}^{-1/2} \left \lvert \frac{1}{n} \sum_{i = 1}^n \eps_i r^{-1} f(X_i) \right \rvert \right )
\end{equation*}
since $V$ is a contraction vanishing at 0. We have
\begin{align*}
&\sup_{k \in \mathcal{K}} \sup_{r > 0} \sup_{f \in r B_k} \lVert k \rVert_{\diag}^{-1/2} \left \lvert \frac{1}{n} \sum_{i = 1}^n \eps_i r^{-1} f(X_i) \right \rvert \\
= \; &\sup_{k \in \mathcal{K}} \sup_{r > 0} \sup_{f \in r B_k} \lVert k \rVert_{\diag}^{-1/2} \left \lvert \left \langle \frac{1}{n} \sum_{i = 1}^n \eps_i k_{X_i}, r^{-1} f \right \rangle_k \right \rvert \\
= \; &\sup_{k \in \mathcal{K}} \lVert k \rVert_{\diag}^{-1/2} \left \lVert \frac{1}{n} \sum_{i = 1}^n \eps_i k_{X_i} \right \rVert_k \\
= \; &\sup_{k \in \mathcal{K}} \lVert k \rVert_{\diag}^{-1/2} \left ( \frac{1}{n^2} \sum_{i, j = 1}^n \eps_i \eps_j k(X_i, X_j) \right )^{1/2}
\end{align*}
by the reproducing kernel property and the Cauchy--Schwarz inequality. By Lemma \ref{lKernChain} with $\sigma^2 = 1$, Lemma \ref{lPsi1} and Jensen's inequality, we have $\expect(Z) \leq 107 J C n^{-1/2}$.

Let
\begin{equation*}
A = \left \{ \lVert k \rVert_{\diag}^{-1/4} r^{-1/2} V f_1 - \lVert k \rVert_{\diag}^{-1/4} r^{-1/2} V f_2 : k \in \mathcal{K}, r > 0 \text{ and } f_1, f_2 \in r B_k \right \}.
\end{equation*}
We have that $A \subs C(S)$ is separable by Lemma \ref{lSep}. Furthermore,
\begin{align*}
\left \lVert \lVert k \rVert_{\diag}^{-1/4} r^{-1/2} V f_1 - \lVert k \rVert_{\diag}^{-1/4} r^{-1/2} V f_2 \right \rVert_\infty &\leq \min \left ( 2 C \lVert k \rVert_{\diag}^{-1/4} r^{-1/2}, 2 \lVert k \rVert_{\diag}^{1/4} r^{1/2} \right ) \\
&\leq 2 C^{1/2}
\end{align*}
for all $k \in \mathcal{K}$, all $r > 0$ and all $f_1, f_2 \in r B_k$. By Lemma \ref{lTal}, we have
\begin{equation*}
Z \leq \expect(Z) + \left ( \frac{32 C^2 t}{n} + \frac{16 C \expect(Z) t}{n} \right )^{1/2} + \frac{8 C t}{3 n}
\end{equation*}
with probability at least $1 - e^{-t}$. We have $\expect(Z) \leq 107 J C n^{-1/2}$ from above. The result follows. 
\end{proof}

We move the bound on the distance between $V \hat h_{k, r}$ and $V h_{k, r}$ from the $L^2(P_n)$ norm to the $L^2 (P)$ norm for $k \in \mathcal{K}$, $r \geq 0$ and $h_{k, r} \in r B_k$.

\begin{corollary} \label{cVaryTrueNorm}
Assume (Y) and ($\mathcal{K}1$). Let $t \geq 1$ and recall the definitions of $A_{3, t}$ and $A_{4, t}$ from Lemmas \ref{lVaryBias} and \ref{lVarySwapNorm}. On the set $A_{3, t} \cap A_{4, t} \in \mathcal{F}$, for which $\prob(A_{3, t} \cap A_{4, t}) \geq 1 - 2 e^{-t}$, we have
\begin{equation*}
\lVert V \hat h_{k, r} - V h_{k, r} \rVert_{L^2 (P)}^2 \leq \frac{J \lVert k \rVert_{\diag}^{1/2} (151 C + 21 \sigma) r t^{1/2}}{n^{1/2}} + \frac{8 \lVert k \rVert_{\diag}^{1/2} C r t}{3 n} + 4 \lVert h_{k, r} - g \rVert_\infty^2
\end{equation*}
simultaneously for all $k \in \mathcal{K}$, all $r \geq 0$ and all $h_{k, r} \in r B_k$.
\end{corollary}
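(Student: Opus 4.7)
The plan is to mimic the proof of Corollary \ref{cTrueNorm} essentially verbatim, replacing each appeal to a fixed-RKHS estimate with the corresponding collection-of-RKHSs estimate. The two ingredients are Lemma \ref{lVaryBias}, which controls the squared $L^2(P_n)$ error of $\hat h_{k,r}$ around an arbitrary $h_{k,r} \in r B_k$ uniformly in $k$ and $r$ on $A_{3,t}$, and Lemma \ref{lVarySwapNorm}, which controls the discrepancy between the squared $L^2(P_n)$ and squared $L^2(P)$ norms uniformly over clipped differences of elements of $r B_k$, again uniformly in $k$ and $r$, on $A_{4,t}$.

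First I would work on the event $A_{3,t} \cap A_{4,t}$, which by a union bound has probability at least $1 - 2 e^{-t}$. By Lemma \ref{lVaryBias}, we obtain
\begin{equation*}
\lVert \hat h_{k, r} - h_{k, r} \rVert_{L^2 (P_n)}^2 \leq \frac{21 J \lVert k \rVert_{\diag}^{1/2} \sigma r t^{1/2}}{n^{1/2}} + 4 \lVert h_{k, r} - g \rVert_\infty^2
\end{equation*}
simultaneously for all $k \in \mathcal{K}$, all $r \geq 0$ and all $h_{k,r} \in r B_k$. Since $V : \real \to [-C, C]$ is a $1$-Lipschitz contraction, $\lVert V \hat h_{k,r} - V h_{k,r} \rVert_{L^2(P_n)}^2$ inherits the same upper bound.

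Next, since $\hat h_{k,r}, h_{k,r} \in r B_k$, I would apply Lemma \ref{lVarySwapNorm} with $f_1 = \hat h_{k,r}$ and $f_2 = h_{k,r}$ to deduce
\begin{equation*}
\lVert V \hat h_{k, r} - V h_{k, r} \rVert_{L^2(P)}^2 - \lVert V \hat h_{k, r} - V h_{k, r} \rVert_{L^2(P_n)}^2 \leq \frac{151 J \lVert k \rVert_{\diag}^{1/2} C r t^{1/2}}{n^{1/2}} + \frac{8 \lVert k \rVert_{\diag}^{1/2} C r t}{3 n}
\end{equation*}
simultaneously for all $k \in \mathcal{K}$ and all $r \geq 0$. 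Adding the two displays and combining the two $r t^{1/2}/n^{1/2}$ terms (with coefficients $21 J \lVert k \rVert_{\diag}^{1/2} \sigma$ and $151 J \lVert k \rVert_{\diag}^{1/2} C$ respectively) yields exactly the claimed bound.

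There is no real obstacle here: the proof is a direct assembly of the two lemmas and the contraction property of $V$, and the constants in the statement are chosen precisely so that the arithmetic works out. The only bookkeeping point to note is that the resulting inequality holds simultaneously over $\mathcal{K} \times [0, \infty) \times r B_k$ because both Lemma \ref{lVaryBias} and Lemma \ref{lVarySwapNorm} already hold in that uniform sense on $A_{3,t}$ and $A_{4,t}$, and that measurability of the resulting event on $A_{3,t} \cap A_{4,t}$ follows from Lemma \ref{lSep}, which was used already to make the event in Lemma \ref{lVarySwapNorm} measurable.
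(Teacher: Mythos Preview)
Your proposal is correct and follows essentially the same argument as the paper's own proof: apply Lemma \ref{lVaryBias} on $A_{3,t}$, use that $V$ is a contraction to pass to the clipped $L^2(P_n)$ bound, then add the norm-swap estimate from Lemma \ref{lVarySwapNorm} on $A_{4,t}$ using $\hat h_{k,r}, h_{k,r} \in r B_k$. The constants combine exactly as you describe.
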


\begin{proof}
By Lemma \ref{lVaryBias}, we have
\begin{equation*}
\lVert \hat h_{k, r} - h_{k, r} \rVert_{L^2 (P_n)}^2 \leq \frac{21 J \lVert k \rVert_{\diag}^{1/2} \sigma r t^{1/2}}{n^{1/2}} + 4 \lVert h_{k, r} - g \rVert_\infty^2
\end{equation*}
for all $k \in \mathcal{K}$, all $r \geq 0$ and all $h_{k, r} \in r B_k$, so
\begin{equation*}
\lVert V \hat h_{k, r} - V h_{k, r} \rVert_{L^2 (P_n)}^2 \leq \frac{21 J \lVert k \rVert_{\diag}^{1/2} \sigma r t^{1/2}}{n^{1/2}} + 4 \lVert h_{k, r} - g \rVert_\infty^2.
\end{equation*}
Since $\hat h_{k, r}, h_{k, r} \in r B_k$, by Lemma \ref{lVarySwapNorm} we have
\begin{align*}
&\lVert V \hat h_{k, r} -  V h_{k, r} \rVert_{L^2 (P)}^2 - \lVert V \hat h_{k, r} -  V h_{k, r} \rVert_{L^2 (P_n)}^2 \\
\leq \; &\sup_{f_1, f_2 \in r B_k} \left \lvert \lVert V f_1 - V f_2 \rVert_{L^2 (P_n)}^2 - \lVert V f_1 - V f_2 \rVert_{L^2 (P)}^2 \right \rvert \\
\leq \; &\frac{151 J \lVert k \rVert_{\diag}^{1/2} C r t^{1/2}}{n^{1/2}} + \frac{8 \lVert k \rVert_{\diag}^{1/2} C r t}{3 n}.
\end{align*}
The result follows.
\end{proof}

We assume (g1) to bound the distance between $V \hat h_{k, r}$ and $g$ in the $L^2(P)$ norm for $k \in \mathcal{K}$ and $r \geq 0$ and prove Theorem 6.

{\bf Proof of Theorem 6}
Note that $V g = g$. We have
\begin{align*}
\lVert V \hat h_{k, r} - g \rVert_{L^2 (P)}^2 &\leq \left ( \lVert V \hat h_{k, r} - V h_{k, r} \rVert_{L^2 (P)} + \lVert V h_{k, r} - g \rVert_{L^2 (P)} \right )^2 \\
&\leq 2 \lVert V \hat h_{k, r} - V h_{k, r} \rVert_{L^2 (P)}^2 + 2 \lVert V h_{k, r} - g \rVert_{L^2 (P)}^2 \\
&\leq 2 \lVert V \hat h_{k, r} - V h_{k, r} \rVert_{L^2 (P)}^2 + 2 \lVert h_{k, r} - g \rVert_{L^2 (P)}^2
\end{align*}
for all $k \in \mathcal{K}$, all $r \geq 0$ and all $h_{k, r} \in r B_k$. By Corollary \ref{cVaryTrueNorm}, we have
\begin{equation*}
\lVert V \hat h_{k, r} - V h_{k, r} \rVert_{L^2 (P)}^2 \leq \frac{J \lVert k \rVert_{\diag}^{1/2} (151 C + 21 \sigma) r t^{1/2}}{n^{1/2}} + \frac{8 \lVert k \rVert_{\diag}^{1/2} C r t}{3 n} + 4 \lVert h_{k, r} - g \rVert_\infty^2.
\end{equation*}
Hence,
\begin{equation*}
\lVert V \hat h_{k, r} - g \rVert_{L^2 (P)}^2 \leq \frac{2 J \lVert k \rVert_{\diag}^{1/2} (151 C + 21 \sigma) r t^{1/2}}{n^{1/2}} + \frac{16 \lVert k \rVert_{\diag}^{1/2} C r t}{3 n} + 10 \lVert h_{k, r} - g \rVert_\infty^2.
\end{equation*}
Taking an infimum over $h_{k, r} \in r B_k$ proves the result. \hfill \BlackBox

\section{Covering Numbers for Gaussian Kernels} \label{sCovNum}

Recall that
\begin{equation*}
\mathcal{L} = \left \{ f_\gamma (x_1, x_2) = \exp \left (- \lVert x_1 - x_2 \rVert_2^2 / \gamma^2 \right ) : \gamma \in \Gamma \text{ and } x_1, x_2 \in S \right \} \cup \{0\}.
\end{equation*}
for $\Gamma \subs [u, v]$ non-empty for $v \geq u > 0$. We prove a continuity result about the function $F : \Gamma \to \mathcal{L} \setminus \{0\}$ by $F(\gamma) = f_\gamma$. We also bound the covering numbers of $\mathcal{L}$.

\begin{lemma} \label{lCoverNum}
Assume ($\mathcal{K}2$). Let $\gamma, \eta \in \Gamma$. We have
\begin{equation*}
\lVert f_\gamma - f_\eta \rVert_\infty \leq \frac{(\gamma^2 - \eta^2)^{1/2}}{\gamma \vee \eta}.
\end{equation*}
For $a \in (0, 1)$, we have $N(a, \mathcal{L}, \lVert \cdot \rVert_\infty) \leq \log(v/u)a^{-2} + 2$. For $a \geq 1$, we have $N(a, \mathcal{L}, \lVert \cdot \rVert_\infty) = 1$.
\end{lemma}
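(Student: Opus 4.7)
The lemma makes three separate claims which I address in sequence.

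\emph{Continuity bound.} By symmetry I may assume $\gamma \geq \eta$, so $\gamma \vee \eta = \gamma$. Fix $x_1, x_2 \in S$ and write $t = \lVert x_1 - x_2 \rVert_2^2 \geq 0$. The key substitution is $v = \exp(-t/\eta^2) \in (0, 1]$ and $\alpha = \eta^2 / \gamma^2 \in (0, 1]$, under which $\exp(-t/\gamma^2) = v^\alpha$. Since $v \in (0, 1]$ and $\alpha \leq 1$ we have $v^\alpha \geq v$, and the problem reduces to bounding $\sup_{v \in (0, 1]} (v^\alpha - v)$. Setting the derivative to zero gives the interior maximum at $v_\star = \alpha^{1/(1-\alpha)} \in (0, 1]$, with value $(1 - \alpha) v_\star^\alpha$. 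Since $v_\star \leq 1$ this is at most $1 - \alpha$, and $1 - \alpha \leq (1-\alpha)^{1/2}$ because $1 - \alpha \in [0, 1]$. Recognising $(1-\alpha)^{1/2} = (\gamma^2 - \eta^2)^{1/2}/\gamma$ and taking the supremum over $x_1, x_2$ yields the stated bound.

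\emph{Covering number for $a \geq 1$.} Every $f \in \mathcal{L}$ takes values in $[0, 1]$, so $\lVert f - 0 \rVert_\infty \leq 1 \leq a$. Since $0 \in \mathcal{L}$, the singleton $\{0\}$ is an $a$-cover, giving $N(a, \mathcal{L}, \lVert \cdot \rVert_\infty) = 1$.

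\emph{Covering number for $a \in (0, 1)$.} From the continuity bound, $\lVert f_\gamma - f_\eta \rVert_\infty \leq a$ whenever $(\gamma \wedge \eta)/(\gamma \vee \eta) \geq (1 - a^2)^{1/2}$; that is, the $a$-ball around $f_{\gamma_0}$ covers every $f_\eta$ with $\eta \in [\gamma_0 (1-a^2)^{1/2}, \gamma_0 (1-a^2)^{-1/2}]$. I build a geometric grid of centres $\gamma_i = u (1-a^2)^{-(i - 1/2)}$ for $i = 1, \ldots, I$, whose associated $a$-balls tile $[u, u (1-a^2)^{-I}]$. Taking $I = \max(1, \lceil \log(v/u)/a^2 \rceil)$ and using the elementary inequality $\log((1-a^2)^{-1}) \geq a^2$ ensures $u (1-a^2)^{-I} \geq v$, so $\{f_{\gamma_i}\}_{i=1}^I$ covers $\{f_\gamma : \gamma \in \Gamma\}$. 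Including $\{0\}$ as one additional centre to handle $0 \in \mathcal{L}$, which no such $f$-ball can reach when $a < 1$, gives a cover of size $I + 1 \leq \log(v/u)/a^2 + 2$, as claimed.

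\emph{Main obstacle.} The substantive step is the continuity bound; once it is established, the covering estimates follow routinely from the geometric-grid construction together with $\log((1-a^2)^{-1}) \geq a^2$. The crux is choosing the substitution $v = \exp(-t/\eta^2)$, which converts a two-variable sup into a one-parameter calculus problem, and then observing that the precise interior maximum $(1 - \alpha)\alpha^{\alpha/(1-\alpha)}$ can be controlled by the target $(1 - \alpha)^{1/2}$ via the trivial bound $v_\star^\alpha \leq 1$.
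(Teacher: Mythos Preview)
Your proof is correct, and your argument for the continuity bound is genuinely different from---and cleaner than---the paper's. The paper proceeds indirectly: it fixes a target $a\in(0,1)$, splits into the cases $\lVert x_1-x_2\rVert_2 > \gamma\log(1/a)^{1/2}$ and $\lVert x_1-x_2\rVert_2 \le \gamma\log(1/a)^{1/2}$, and in the second case multiplies by $\exp(\lVert x_1-x_2\rVert_2^2/\eta^2)$ to obtain $\exp(\log(1/a)((\gamma/\eta)^2-1))-1$, then uses the elementary bounds $x/(1+x)\le\log(1+x)\le x$ to show this is at most $a$ whenever $\gamma\le(1-a^2)^{-1/2}\eta$; rearranging for $a$ gives the stated inequality. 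Your substitution $v=\exp(-t/\eta^2)$, $\alpha=\eta^2/\gamma^2$ collapses everything to the single calculus problem $\sup_{v\in(0,1]}(v^\alpha-v)=(1-\alpha)\alpha^{\alpha/(1-\alpha)}\le 1-\alpha\le(1-\alpha)^{1/2}$, which is both shorter and avoids the case split entirely.

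For the covering bound on $(0,1)$ the two arguments are essentially the same geometric-grid construction. The paper spaces its centres by $a^2$ in $\log\gamma$ (using the intermediate sufficient condition $\lvert\log\gamma-\log\eta\rvert\le a^2/2$), whereas you space them by $-\log(1-a^2)$ in $\log\gamma$ (using the final continuity bound directly); the inequality $-\log(1-a^2)\ge a^2$ that you invoke makes the two grids yield the same ceiling $\lceil\log(v/u)/a^2\rceil$. Both arguments share the same mild informality that the chosen centres $\gamma_i$ need not lie in $\Gamma$, which is harmless since the continuity bound holds for all $\gamma,\eta\in[u,v]$ and one is really covering the larger set $\{f_\gamma:\gamma\in[u,v]\}\cup\{0\}$.
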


\begin{proof}
Let $\gamma \geq \eta$ and $x_1, x_2 \in S$. We have
\begin{align*}
\lvert f_\gamma (x_1, x_2) - f_\eta (x_1, x_2) \rvert &= f_\gamma (x_1, x_2) - f_\eta (x_1, x_2) \\
&\leq \exp \left ( - \lVert x_1 - x_2 \rVert_2^2 / \gamma^2 \right ).
\end{align*}
This is at most $a \in (0, 1)$ whenever $\lVert x_1 - x_2 \rVert_2 > \gamma \log(1/a)^{1/2}$. Suppose $\lVert x_1 - x_2 \rVert_2 \leq \gamma \log(1/a)^{1/2}$. We have
\begin{align*}
\lvert f_\gamma (x_1, x_2) - f_\eta (x_1, x_2) \rvert &= f_\gamma (x_1, x_2) - f_\eta (x_1, x_2) \\
&\leq \exp \left ( \lVert x_1 - x_2 \rVert_2^2 / \eta^2 \right ) (f_\gamma (x_1, x_2) - f_\eta (x_1, x_2)) \\
&= \exp \left ( \lVert x_1 - x_2 \rVert_2^2 \left ( \eta^{-2} - \gamma^{-2} \right ) \right ) - 1 \\
&\leq \exp \left ( \log(1/a) \left ( (\gamma / \eta)^2 - 1 \right ) \right ) - 1.
\end{align*}
This is at most $a$ whenever
\begin{equation}
\gamma \leq \left ( 1 + \frac{\log(1 + a)}{\log(1/a)} \right )^{1/2} \eta. \label{eGaussRat}
\end{equation}
Since $x/(1+x) \leq \log(1 + x) \leq x$ for $x \geq 0$, we have
\begin{align*}
\left ( 1 + \frac{\log(1 + a)}{\log(1/a)} \right )^{1/2} &= \left ( 1 + \frac{\log(1 + a)}{\log(1 + (1-a)/a)} \right )^{1/2} \\
&\geq \left ( 1 + \frac{a/(1+a)}{(1-a)/a} \right )^{1/2} \\
&= \left ( 1 + \frac{a^2}{1 - a^2} \right )^{1/2}.
\end{align*}
Hence, \eqref{eGaussRat} holds whenever
\begin{equation*}
\gamma \leq \left ( 1 + \frac{a^2}{1 - a^2} \right )^{1/2} \eta,
\end{equation*}
or
\begin{equation*}
\log(\gamma) \leq \frac{1}{2} \log \left ( 1 + \frac{a^2}{1 - a^2} \right ) + \log(\eta).
\end{equation*}
The first result follows by rearranging for $a$.

Since
\begin{equation*}
\log \left ( 1 + \frac{a^2}{1 - a^2} \right ) \geq \frac{a^2/(1 - a^2)}{1 + a^2/(1 - a^2)} = a^2,
\end{equation*}
\eqref{eGaussRat} holds whenever $\log(\gamma) \leq a^2 / 2 + \log(\eta)$. Hence, for any $\gamma, \eta \in \Gamma$, we find $\lVert f_\gamma - f_\eta \rVert_\infty \leq a$ whenever $\lvert \log(\gamma) - \log(\eta) \rvert \leq a^2 / 2$. Let $b \geq 1$ and $\gamma_i \in \Gamma$ for $1 \leq i \leq b$. Recall that $\Gamma \subs [u, v]$. If we let
\begin{equation*}
\log(\gamma_i) = \log(u) + a^2 (2 i - 1) / 2
\end{equation*}
and let $b$ be such that
\begin{equation*}
\log(v) - \left ( \log(u) + a^2 (2 b - 1) / 2 \right ) \leq a^2 / 2,
\end{equation*}
then we find the $f_{\gamma_i}$ for $1 \leq i \leq b$ form an $a$ cover of $(\mathcal{L} \setminus \{0\}, \lVert \cdot \rVert_\infty)$. Rearranging the above shows that we can choose
\begin{equation*}
b = \left \lceil \frac{\log(v/u)}{a^2} \right \rceil
\end{equation*}
and the second result follows by adding $\{0\}$ to the cover. The third result follows from the fact that $f_\gamma (x_1, x_2) \in (0, 1]$ for all $\gamma \in \Gamma$ and all $x_1, x_2 \in S$.
\end{proof}

\begin{lemma} \label{lGaussEst}
Assume ($\mathcal{K}2$). We have that $\hat h_{\gamma, r}$ is an $(C(S), \bor(C(S)))$-valued measurable function on $(\Omega \times \Gamma \times [0, \infty), \mathcal{F} \otimes \bor(\Gamma) \otimes \bor([0, \infty)))$, where $\gamma$ varies in $\Gamma$ and $r$ varies in $[0, \infty)$.
\end{lemma}

We calculate an integral of these covering numbers.

\begin{lemma} \label{lCoverInt}
Assume ($\mathcal{K}2$). We have
\begin{equation*}
\int_0^{1/2} \log N(a, \mathcal{L}, \lVert \cdot \rVert_\infty) da \leq \frac{\log(2 + 4 \log(v/u))}{2} + 1.
\end{equation*}
\end{lemma}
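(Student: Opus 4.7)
The plan is to reduce the integral to elementary pieces using the explicit covering number bound from Lemma \ref{lCoverNum}. That lemma gives $N(a, \mathcal{L}, \lVert \cdot \rVert_\infty) \leq \log(v/u)\, a^{-2} + 2$ for $a \in (0, 1)$, so by monotonicity of $\log$ it suffices to bound
\begin{equation*}
\int_0^{1/2} \log\!\left(\log(v/u)\, a^{-2} + 2\right) da.
\end{equation*}
Writing $c = \log(v/u)$ and noting that $c\, a^{-2} + 2 = (c + 2 a^2)/a^2$, I would split the logarithm as $\log(c + 2 a^2) - 2 \log a$ and treat the two pieces separately.

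For the first piece, I would use monotonicity of $a \mapsto \log(c + 2 a^2)$ on $[0, 1/2]$ to obtain the crude but sharp-enough bound
\begin{equation*}
\int_0^{1/2} \log(c + 2 a^2)\, da \leq \tfrac{1}{2} \log(c + 1/2).
\end{equation*}
For the second piece, I would use the antiderivative $a \log a - a$ to compute
\begin{equation*}
-2 \int_0^{1/2} \log a\, da = -2 \left[ \tfrac{1}{2} \log(1/2) - \tfrac{1}{2} \right] = \log 2 + 1.
\end{equation*}
Adding these and combining the two $\log$ terms via $\log 2 + \tfrac{1}{2} \log(c + 1/2) = \tfrac{1}{2} \log(4c + 2)$ gives exactly $\tfrac{1}{2} \log(2 + 4 \log(v/u)) + 1$, as required.

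There is really no hard step here; the only thing to watch is the improper integral $\int_0^{1/2} \log a\, da$, which is finite because $a \log a \to 0$ as $a \to 0^+$, and the algebraic rearrangement of the two $\log$ contributions into the single $\log(2 + 4 \log(v/u))$ form demanded by the statement. No further tools beyond Lemma \ref{lCoverNum} and one-variable calculus are needed.
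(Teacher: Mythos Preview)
Your proposal is correct and takes essentially the same approach as the paper: both start from the covering-number bound of Lemma \ref{lCoverNum}, bound the integrand by $\log(c+1/2)-2\log a$ (the paper reaches the identical pointwise bound after substituting $b=2a$ and using $2+4c\,b^{-2}\leq(2+4c)b^{-2}$), and then integrate the two pieces. The only cosmetic differences are that the paper changes variables first and evaluates $\int_0^1\log(1/b)\,db=1$ via the exponential-mean identity, whereas you use the antiderivative $a\log a-a$ directly; the algebra and the final constants coincide.
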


\begin{proof}
We have
\begin{equation*}
\int_0^{1/2} \log N(a, \mathcal{L}, \lVert \cdot \rVert_\infty) da \leq \int_0^{1/2} \log \left ( 2 + \log(v/u) a^{-2} \right ) da
\end{equation*}
by Lemma \ref{lCoverNum}. Changing variables to $b = 2 a$ gives
\begin{align*}
\frac{1}{2} \int_0^1 \log \left ( 2 + 4 \log(v/u) b^{-2} \right ) db &\leq \frac{1}{2} \int_0^1 \log \left ( (2 + 4 \log(v/u)) b^{-2} \right ) db \\
&= \frac{\log(2 + 4 \log(v/u))}{2} + \int_0^1 \log(b^{-1}) db.
\end{align*}
Changing variables to $s = \log(b^{-1})$ shows
\begin{equation*}
\int_0^1 \log \left ( b^{-1} \right ) db = \int_0^\infty s \exp(-s) ds = 1
\end{equation*}
since the last integral is the mean of an $\expon(1)$ random variable.
\end{proof}

This lemma allows us directly to gain a bound on $J$.
\begin{lemma} \label{lBoundonJ}
Assume ($\mathcal{K}2$). We have
\begin{equation*}
J \leq \left ( 81 (\log(8 \log(v/u) + 4) + 2) + 1 \right )^{1/2}.
\end{equation*}
\end{lemma}

\section{The Orlicz Space $L^{\psi_1}$} \label{sOrl}

Recall that $\psi_1 (x) = \exp( \lvert x \rvert ) - 1$ for $x \in \real$,
\begin{equation*}
\lVert Z \rVert_{\psi_1} = \inf \{ a \in (0, \infty) : \expect(\psi_1 (Z / a)) \leq 1 \}
\end{equation*}
for any random variable $Z$ on $(\Omega, \mathcal{F})$ and $L^{\psi_1}$ is the set of random variables $Z$ on $(\Omega, \mathcal{F}, \prob)$ such that $\lVert Z \rVert_{\psi_1} < \infty$. We have that $(L^{\psi_1}, \lVert \cdot \rVert_{\psi_1})$ is a Banach space known as an Orlicz space (see \cite{rao1991theory}). For $t \geq 0$, also recall that
\begin{equation*}
\expect(\lvert Z \rvert) \leq (\log 2) \lVert Z \rVert_{\psi_1} \text{ and } \lvert Z \rvert \leq \lVert Z \rVert_{\psi_1} (\log 2 + t)
\end{equation*}
with probability at least $1 - e^{-t}$ by Lemma \ref{lPsi1}. We prove a maximal inequality in $L^{\psi_1}$ using the same method as Lemma 2.3.3 of \cite{gine2015mathematical}.

\begin{lemma} \label{lMaxPsi1}
Let $Z_i \in L^{\psi_1}$ for $1 \leq i \leq I$. Then
\begin{equation*}
\left \lVert \max_{1 \leq i \leq I} \lvert Z_i \rvert \right \rVert_{\psi_1} \leq \frac{\log(2 I)}{\log(5/4)} \max_{1 \leq i \leq I} \lVert Z_i \rVert_{\psi_1}.
\end{equation*}
\end{lemma}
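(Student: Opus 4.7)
The plan is to reduce the Orlicz-norm bound to an exponential moment computation for $W = \max_{1 \leq i \leq I} \lvert Z_i \rvert$, and to show that choosing $b = KA$ with $A = \max_i \lVert Z_i \rVert_{\psi_1}$ and $K = \log(2I)/\log(5/4)$ makes $\expect(\psi_1(W/b)) \leq 1$, which by definition of the Orlicz norm gives $\lVert W \rVert_{\psi_1} \leq KA$.

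First I would assemble the ingredients. By the definition of the $\psi_1$-norm and the monotone convergence argument mentioned after the definition, $\expect(\exp(\lvert Z_i \rvert/A)) \leq 2$ for every $i$. Markov's inequality then yields the sub-exponential tail $\prob(\lvert Z_i \rvert > t) \leq 2 e^{-t/A}$, and a union bound gives the clean estimate $\prob(W > t) \leq \min(1, 2 I e^{-t/A})$. The natural breakpoint is $t_0 = A \log(2 I)$, where the minimum switches from $1$ to $2 I e^{-t/A}$.

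Next, I would use the layer-cake identity in the form
\begin{equation*}
\expect(\exp(W/b)) = 1 + \frac{1}{b} \int_0^\infty e^{t/b} \prob(W > t) \, dt,
\end{equation*}
split the integral at $t_0$, and estimate each piece. The first piece is bounded by $e^{t_0/b} - 1 = (2I)^{A/b} - 1$. For the second piece, for $b > A$ the integrand decays like $e^{-t(1/A - 1/b)}$, and a direct evaluation together with $e^{-t_0(1/A - 1/b)} = (2I)^{A/b - 1}$ gives $a(2I)^{A/b}/(b - A)$. Adding and simplifying yields
\begin{equation*}
\expect(\exp(W/b)) \leq (2I)^{A/b} \cdot \frac{b}{b - A}.
\end{equation*}

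Finally, I would substitute $b = KA$ with $K = \log(2I)/\log(5/4)$, so that $(2I)^{A/b} = \exp(\log(5/4)) = 5/4$, and verify the inequality $(5/4) \cdot K/(K-1) \leq 2$, which is equivalent to $K \geq 8/3$. Since $I \geq 1$ we have $\log(2I) \geq \log 2 > (8/3)\log(5/4)$, so $K \geq 8/3$ always. Thus $\expect(\exp(W/b)) \leq 2$, i.e.\ $\expect(\psi_1(W/b)) \leq 1$, and the Orlicz norm bound follows. The only delicate points are the split of the tail integral and the verification of the inequality for the constant, both of which are routine once the exponents are written out carefully; the main subtlety is that the naive bound $\psi_1(\max_i x_i) \leq \sum_i \psi_1(x_i)$ combined with $\psi_1(x/c) \leq \psi_1(x)/c$ would only give a linear-in-$I$ constant, whereas to obtain the logarithmic constant one must exploit the exponential decay of the tail through the integration, which is why the proof proceeds via the MGF rather than through convexity of $\psi_1$ alone.
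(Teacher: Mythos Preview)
Your proof is correct and follows essentially the same route as the paper: both arguments use the layer-cake representation of $\expect(\exp(W/b))$, split the tail integral at the point where the union bound $2Ie^{-t/A}$ crosses $1$ (the paper arrives at this split by optimising over a free cut-off $C$, which yields exactly your $t_0=A\log(2I)$ after its change of variables), and obtain the identical bound $(2I)^{A/b}\,b/(b-A)$ before plugging in $b=A\log(2I)/\log(5/4)$. The only cosmetic differences are the change of variables in the integral and the form of the final numerical check (the paper verifies $b^2 2^{b}\le 4$ at $b=5/4$, equivalent to your $K\ge 8/3$); note the minor typo where you wrote ``$a(2I)^{A/b}/(b-A)$'' instead of ``$A(2I)^{A/b}/(b-A)$''.
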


\begin{proof}
Let $M = \max_{1 \leq i \leq I} \lVert Z_i \rVert_{\psi_1}$. Also, let $C \geq 1$ and $a \in (0, \infty)$. By Lemma \ref{lPsi1}, we have
\begin{align*}
\expect \left ( \exp \left ( \max_{1 \leq i \leq I} \lvert Z_i \rvert / a \right ) \right ) &= \int_0^\infty \prob \left ( \max_{1 \leq i \leq I} \lvert Z_i \rvert > a \log t \right ) dt \\
&\leq C + \int_C^\infty \prob \left ( \max_{1 \leq i \leq I} \lvert Z_i \rvert > a \log t \right ) dt \\
&\leq C + \sum_{i = 1}^I \int_C^\infty \prob \left ( \lvert Z_i \rvert > a \log t \right ) dt \\
&\leq C + I \int_C^\infty 2 t^{- a / M} dt.
\end{align*}
Differentiating this bound with respect to $C$ gives $1 - 2 I C^{- a / M}$, so the bound is minimised by $C = (2 I)^{M / a}$. For $a > M$, the bound becomes
\begin{align*}
C + 2 I \frac{M}{a - M} C^{- (a - M)/M} &= (2 I)^{M / a} + \frac{M}{a - M} (2 I)^{1 - (a - M)/a} \\
&= \frac{a}{a - M} (2 I)^{M / a}.
\end{align*}
Let
\begin{equation*}
a = \frac{M \log(2 I)}{\log b}
\end{equation*}
for $b > 1$. We have
\begin{equation*}
\expect \left ( \exp \left ( \max_{1 \leq i \leq I} \lvert Z_i \rvert / a \right ) \right ) \leq 2
\end{equation*}
if $b^2 2^b \leq 4$, the hardest case being $I = 1$. This holds for $b = 5/4$ and the result follows.
\end{proof}
We perform chaining in $L^{\psi_1}$ using the same method as Theorem 2.3.6 of \cite{gine2015mathematical}. Recall that $N(a, M, d)$ is the minimum size of an $a > 0$ cover of a metric space $(M, d)$.

\begin{lemma} \label{lChainPsi1}
Let $Z$ be a stochastic process on $(\Omega, \mathcal{F})$ indexed by a separable metric space $(M, d)$ on which $Z$ is almost-surely continuous with $\lVert Z(s) - Z(t) \rVert_{\psi_1} \leq d(s, t)$ for all $s, t \in M$. Let $D = \sup_{s, t \in M} d(s, t)$. Fix $s_0 \in M$. Then
\begin{equation*}
\left \lVert \sup_{s \in M} \lvert Z(s) - Z(s_0) \rvert \right \rVert_{\psi_1} \leq \frac{4}{\log(5/4)} \int_{0}^{D/2} \log(2 N(a, M, d)) da.
\end{equation*}
\end{lemma}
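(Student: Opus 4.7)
The plan is to carry out the classical dyadic chaining argument in the Orlicz space $L^{\psi_1}$, using Lemma \ref{lMaxPsi1} as the level-by-level maximal inequality, with the chain defined iteratively so that the distance between consecutive anchors is tightly controlled and the constant $4/\log(5/4)$ is attained.

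First I would set up the covers. For each $k \geq 0$ let $T_k \subs M$ be a minimal $D/2^k$ cover, so $|T_k| = N(D/2^k, M, d)$. Since $D = \sup d$, we may take $T_0 = \{s_0\}$. For each $k \geq 1$, choose an anchor map $\rho_k : T_k \to T_{k-1}$ with $d(t, \rho_k(t)) \leq D/2^{k-1}$ for all $t \in T_k$. For $s \in M$ and $K \geq 1$, choose $\pi_K(s) \in T_K$ with $d(s, \pi_K(s)) \leq D/2^K$, and define the chain
\begin{equation*}
\pi_K(s), \; \pi_{K-1}(s) := \rho_K(\pi_K(s)), \; \ldots, \; \pi_0(s) = s_0.
\end{equation*}
Crucially, $\pi_{k-1}(s)$ is a function of $\pi_k(s)$ alone, so the increments $Z(\pi_k(s)) - Z(\pi_{k-1}(s))$ take at most $|T_k|$ distinct values as $s$ varies, and each has $\psi_1$ norm bounded by $d(\pi_k(s), \pi_{k-1}(s)) \leq D/2^{k-1}$ by the hypothesis on $Z$.

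Next I would perform the telescoping and apply the maximal inequality. By separability of $M$, take a countable dense subset $M_0$; by almost sure continuity of $Z$, the supremum over $M$ equals the supremum over $M_0$ almost surely, so the random variable of interest is measurable, and we may reduce to a countable supremum. Almost sure continuity also gives $Z(\pi_K(s)) \to Z(s)$ as $K \to \infty$, since $d(s, \pi_K(s)) \leq D/2^K \to 0$. Telescoping and letting $K \to \infty$ yields
\begin{equation*}
Z(s) - Z(s_0) = \sum_{k=1}^\infty (Z(\pi_k(s)) - Z(\pi_{k-1}(s))) \text{ almost surely},
\end{equation*}
so by the triangle inequality for $\lVert \cdot \rVert_{\psi_1}$ and Lemma \ref{lMaxPsi1},
\begin{equation*}
\left\lVert \sup_{s \in M} \lvert Z(s) - Z(s_0) \rvert \right\rVert_{\psi_1} \leq \sum_{k=1}^\infty \frac{\log(2 N(D/2^k, M, d))}{\log(5/4)} \cdot \frac{D}{2^{k-1}} = \frac{2D}{\log(5/4)} \sum_{k=1}^\infty \frac{\log(2 N(D/2^k, M, d))}{2^k}.
\end{equation*}

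Finally, I would bound the dyadic sum by an integral. Since $a \mapsto \log(2 N(a, M, d))$ is non-increasing, on each interval $[D/2^{k+1}, D/2^k]$ of length $D/2^{k+1}$ we have $\log(2 N(a, M, d)) \geq \log(2 N(D/2^k, M, d))$, so $\sum_{k=1}^\infty \frac{D}{2^k} \log(2 N(D/2^k, M, d)) \leq 2 \int_0^{D/2} \log(2 N(a, M, d))\, da$. Substituting yields the claimed bound with constant $4/\log(5/4)$. The main obstacle is the need to define the chain iteratively (so that at level $k$ there are only $|T_k|$ distinct increments rather than $|T_k| |T_{k-1}|$) and to use the sharper distance $D/2^{k-1}$ rather than the triangle-inequality bound $3D/2^k$; the naive chaining setup yields a constant several times larger. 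A minor secondary subtlety is swapping sum and $\psi_1$ norm, which is justified by monotone convergence applied to $\psi_1(\cdot/a)$.
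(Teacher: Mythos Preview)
Your proposal is correct and follows essentially the same dyadic chaining argument as the paper, including the iterative definition of the chain (so that the level-$k$ increment is determined by $\pi_k(s)\in T_k$ alone, giving at most $|T_k|$ distinct values) and the identical dyadic-sum-to-integral conversion yielding the constant $4/\log(5/4)$. The only cosmetic difference is that the paper first reduces to a finite subset $F\subset M_0$ via monotone convergence and starts a finite chain exactly at $s\in F$, whereas you pass to an infinite chain and invoke $Z(\pi_K(s))\to Z(s)$; your notation is slightly loose here since, as written, the intermediate points $\pi_{k}(s)$ for $k<K$ depend on the starting depth $K$, but this is harmless once you bound $|Z(s)-Z(s_0)|\le\sum_{k\ge 1}\max_{t\in T_k}|Z(t)-Z(\rho_k(t))|$ directly.
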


\begin{proof}
 Since $(M, d)$ is separable, it has a countable dense subset $M_0$. We have
\begin{equation*}
\left \lVert \sup_{s \in M} \lvert Z(s) - Z(s_0) \rvert \right \rVert_{\psi_1} = \left \lVert \sup_{s \in M_0} \lvert Z(s) - Z(s_0) \rvert \right \rVert_{\psi_1}
\end{equation*}
because $Z$ is almost-surely continuous on $M$. Since $M_0$ is countable, there exists a sequence of increasing finite subsets $F_n \subs M$ for $n \geq 1$ whose union is $M_0$. We have
\begin{equation*}
\left \lVert \sup_{s \in M} \lvert Z(s) - Z(s_0) \rvert \right \rVert_{\psi_1} = \lim_{n \to \infty} \left \lVert \max_{s \in F_n} \lvert Z(s) - Z(s_0) \rvert \right \rVert_{\psi_1}
\end{equation*}
by the monotone convergence theorem. Fix $n \geq 1$ and let $F = F_n$. Let $\delta_j = 2^{- j} D$ for $j \geq 0$. Since $F$ is finite, there exists a minimum $J \geq 0$ such that
\begin{equation*}
\{ t \in F : d(s, t) \leq \delta_J \} = \{ s \}
\end{equation*}
for all $s \in F$. Let $A_j$ for $0 \leq j \leq J-1$ be a $\delta_j$ cover of $(M, d)$ of size $N(\delta_j, M, d)$, where we let $A_0 = \{ s_0 \}$. We define the chain $C : F \times \{ 0, \ldots, J \} \to M$ as follows. Let $C(s, J) = s$ for all $s \in F$. For $1 \leq j \leq J$, given $C(s, j)$, let $C(s, j-1)$ be some closest point in $A_{j-1}$ to $C(s, j)$, depending on $s$ only through $C(s, j)$. We have
\begin{equation*}
Z(s) - Z(s_0) = \sum_{j = 1}^J Z(C(s, j)) - Z(C(s, j-1))
\end{equation*}
for $s \in F$. Hence,
\begin{equation*}
\max_{s \in F} \lvert Z(s) - Z(s_0) \rvert \leq \sum_{j = 1}^J \max_{s \in F} \lvert Z(C(s, j)) - Z(C(s, j-1)) \rvert.
\end{equation*}
By Lemma \ref{lMaxPsi1}, we have
\begin{align*}
\left \lVert \max_{s \in F} \lvert Z(s) - Z(s_0) \rvert \right \rVert_{\psi_1} &\leq \sum_{j = 1}^J \left \lVert \max_{s \in F} \lvert Z(C(s, j)) - Z(C(s, j-1)) \rvert \right \rVert_{\psi_1} \\
&\leq \sum_{j = 1}^J \frac{\log(2 N(\delta_j, M, d)) \delta_{j-1}}{\log(5/4)} \\
&= \frac{4}{\log(5/4)} \sum_{j = 1}^J (\delta_j - \delta_{j+1}) \log(2 N(\delta_j, M, d)) \\
&\leq \frac{4}{\log(5/4)} \int_{\delta_{J+1}}^{\delta_1} \log(2 N(a, M, d)) da \\
&\leq \frac{4}{\log(5/4)} \int_{0}^{D/2} \log(2 N(a, M, d)) da.
\end{align*}
The result follows.
\end{proof}

\section{Subgaussian Random Variables and Symmetric Matrices} \label{sSGRV}

The following result is Lemma 31 of \cite{page2017ivanov}, which is essentially Theorem 2.1 from \cite{quintana2014measurable}.

\begin{lemma} \label{lDiag}
Let $M$ be a non-negative-definite matrix which is an $(\real^{n \times n}, \bor(\real^{n \times n}))$-valued measurable matrix on $(\Omega, \mathcal{F})$. There exist an orthogonal matrix $A$ and a diagonal matrix $D$ which are both $(\real^{n \times n}, \bor(\real^{n \times n}))$-valued measurable matrices on $(\Omega, \mathcal{F})$ such that $M = A D A^{\trans}$.
\end{lemma}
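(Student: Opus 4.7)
The plan is to construct the pair $(A, D)$ pointwise via the spectral theorem and verify measurability of the construction in $\omega$. I would handle $D$ first since it is more straightforward, and then tackle the construction of $A$, where the main difficulty lies.

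For $D$: order the $n$ eigenvalues (with multiplicity) of $M(\omega)$ in non-increasing fashion as $\lambda_1(\omega) \geq \cdots \geq \lambda_n(\omega) \geq 0$. Each $\lambda_i$ is a continuous function of the entries of $M$ by the Courant--Fischer min-max characterisation, so the composition with the measurable map $\omega \mapsto M(\omega)$ gives a Borel measurable function $\lambda_i : \Omega \to [0, \infty)$. Setting $D(\omega) = \diag(\lambda_1(\omega), \ldots, \lambda_n(\omega))$ then yields a measurable diagonal matrix.

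For $A$: I need to select measurably an orthonormal basis of eigenvectors aligned with $D$. My plan is to proceed inductively on the distinct eigenvalues. For each distinct eigenvalue $\mu$ of $M(\omega)$ appearing with multiplicity $m$, the eigenspace $\ker(M(\omega) - \mu I)$ is $m$-dimensional, and its orthogonal projector $P_\mu(\omega)$ admits a contour-integral (resolvent) representation that depends continuously, hence measurably, on $M$ as long as the spectral gap separating $\mu$ from the other eigenvalues is preserved. I would then obtain an orthonormal basis of the range of $P_\mu(\omega)$ by applying Gram--Schmidt to the columns of $P_\mu(\omega)$, using a deterministic lexicographic pivoting rule to select, from among the $n$ columns, the first $m$ that are linearly independent. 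All steps in this procedure are Borel measurable in the matrix entries.

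The main obstacle is handling values of $\omega$ where the multiplicity pattern changes, since the projector $P_\mu(\omega)$ is continuous in $M$ only when the spectral gap is bounded away from zero, and this does not hold uniformly on $\Omega$. To circumvent this, I would partition $\Omega$ into finitely many measurable strata according to the multiplicity pattern of $(\lambda_1(\omega), \ldots, \lambda_n(\omega))$; each such stratum is a finite intersection of sets of the form $\{\lambda_i = \lambda_{i+1}\}$ or $\{\lambda_i > \lambda_{i+1}\}$, hence measurable. On each stratum the multiplicity structure is constant, the relevant projectors depend continuously on $M$, and the inductive construction yields measurable $A$. Gluing these pieces produces the required measurable $A$ globally. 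As an alternative to this stratification, one may invoke the Kuratowski--Ryll-Nardzewski measurable selection theorem applied to the closed-valued set-valued function $\omega \mapsto \{A \in O(n) : A^{\trans} M(\omega) A = D(\omega)\}$, whose non-emptiness comes from the spectral theorem and whose measurability follows from that of $M$ and $D$.
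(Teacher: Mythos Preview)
The paper does not actually prove this lemma: it is stated as a citation, namely Lemma~31 of \cite{page2017ivanov}, which in turn is attributed to Theorem~2.1 of \cite{quintana2014measurable}. Your proposal therefore goes beyond what the paper itself offers.

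Your argument is correct. The eigenvalue part is standard, and both of your strategies for the eigenvector part work. The stratification approach is elementary and constructive: on each stratum the eigenprojectors can be written as polynomials in $M(\omega)$ with coefficients that are rational functions of the distinct eigenvalues, hence are measurable, and Gram--Schmidt with a lexicographic pivot rule is a Borel map on matrices of fixed rank. The measurable-selection route via Kuratowski--Ryll-Nardzewski is cleaner but requires checking weak measurability of the set-valued map $\omega \mapsto \{A \in O(n) : A^{\trans} M(\omega) A = D(\omega)\}$; this follows because for any compact $K \subs O(n)$ the function $(M,D) \mapsto \min_{A \in K} \lVert A^{\trans} M A - D \rVert$ is continuous, so the preimage of $\{0\}$ under the composition with $\omega \mapsto (M(\omega), D(\omega))$ is measurable. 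Either route yields the desired conclusion.
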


 Recall that for $m \times n$ matrices $U$ and $V$, we define $U \circ V$ to be the $m \times n$ matrix with
\begin{equation*}
(U \circ V)_{i, j} = U_{i, j} V_{i, j}.
\end{equation*}
The following lemma is a conditional version of Theorem 1.1 of \cite{rudelson2013hanson}, but with explicit values for the constants derived here.

\begin{lemma} \label{lQuadMGF}
Let $\eps_i$ for $1 \leq i \leq n$ be random variables on $(\Omega, \mathcal{F}, \prob)$ which are independent conditional on some sub-$\sigma$-algebra $\mathcal{G} \subs \mathcal{F}$ and let
\begin{equation*}
\expect( \exp(t \eps_i) \vert \mathcal{G}) \leq \exp( \sigma^2 t^2 / 2)
\end{equation*}
almost surely for $t$ a random variable on $(\Omega, \mathcal{G})$. Let $M$ be an $n \times n$ symmetric matrix which is an $(\real^{n \times n}, \bor(\real^{n \times n}))$-valued measurable matrix on $(\Omega, \mathcal{G})$. We have
\begin{equation*}
\expect \left ( \exp \left ( t \eps^{\trans} (M - I \circ M ) \eps \right ) \middle \vert \mathcal{G} \right ) \leq \exp \left (16 \sigma^4 \tr(M^2) t^2 \right )
\end{equation*}
almost surely for $t$ a random variable on $(\Omega, \mathcal{G})$ such that $32 \sigma^4 \tr(M^2) t^2 \leq 1$.
\end{lemma}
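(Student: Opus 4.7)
The plan is to establish a conditional Hanson--Wright style MGF bound for the off-diagonal quadratic form $\eps^{\trans}(M - I \circ M)\eps = \sum_{i \neq j} M_{ij}\eps_i\eps_j$. I will combine a decoupling step, two applications of conditional subgaussianity, and a Gaussian MGF computation, all carried out conditional on $\mathcal{G}$. Since $t$ and $M$ are $\mathcal{G}$-measurable, inequalities that hold for deterministic $t$ and $M$ can be applied almost surely on the event $\{32\sigma^4 \tr(M^2) t^2 \leq 1\}$.

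First I would enlarge the probability space to carry an independent copy $\eps'$ of $\eps$ conditional on $\mathcal{G}$, together with iid Bernoulli(1/2) random variables $\delta_1, \ldots, \delta_n$ independent of $(\eps, \eps')$ and $\mathcal{G}$. Since $\expect_\delta[\delta_i(1-\delta_j)] = 1/4$ for $i \neq j$ and $0$ for $i = j$, one can write $\sum_{i \neq j} M_{ij}\eps_i\eps_j = 4\expect_\delta\bigl[\sum_{i,j} \delta_i(1-\delta_j) M_{ij}\eps_i\eps_j\bigr]$. Applying Jensen's inequality to $\exp(t\,\cdot\,)$ and then using that, for each realisation of $\delta$, the blocks $(\eps_i)_{\delta_i = 1}$ and $(\eps_j)_{\delta_j = 0}$ are conditionally independent given $\mathcal{G}$ and can be coupled to $\eps$ and $\eps'$ respectively, the standard de la Pe\~na--Montgomery--Smith decoupling argument yields
\begin{equation*}
\expect\bigl(\exp(t \eps^{\trans}(M - I \circ M)\eps) \bigm| \mathcal{G}\bigr) \leq \expect\bigl(\exp(4t \eps^{\trans} M \eps') \bigm| \mathcal{G}\bigr)
\end{equation*}
almost surely. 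Next I would condition additionally on $\eps'$: the $\eps_i$ are $\sigma^2$-subgaussian and conditionally independent given $\mathcal{G}$, hence
\begin{equation*}
\expect\bigl(\exp(4t \eps^{\trans} M \eps') \bigm| \mathcal{G}, \eps'\bigr) \leq \exp(8 t^2 \sigma^2 \lVert M\eps' \rVert_2^2).
\end{equation*}

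To strip off the quadratic term $\lVert M\eps' \rVert_2^2$ I would use the Gaussian integration identity $\exp(\lambda \lVert v \rVert_2^2) = \expect_g \exp(\sqrt{2\lambda}\, g^{\trans} v)$, valid for $g \sim \norm(0, I_n)$ independent of everything else. With $\lambda = 8 t^2 \sigma^2$ and $v = M\eps'$, Fubini gives
\begin{equation*}
\expect\bigl(\exp(8 t^2 \sigma^2 \lVert M\eps' \rVert_2^2) \bigm| \mathcal{G}\bigr) = \expect_g \expect\bigl(\exp(4 t \sigma \sqrt{2}\, g^{\trans} M \eps') \bigm| \mathcal{G}, g\bigr),
\end{equation*}
and a second application of subgaussianity to the conditionally iid $\sigma^2$-subgaussian coordinates of $\eps'$ produces the bound $\exp(8 t^2 \sigma^4 g^{\trans} M^2 g)$ for the inner expectation, using that $M$ is symmetric.

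Finally I would compute the Gaussian MGF of the quadratic form $g^{\trans} M^2 g$. By Lemma \ref{lDiag} I can write $M^2 = V D V^{\trans}$ with $V$ orthogonal and $D = \diag(\mu_1, \ldots, \mu_n)$ with $\mu_i \geq 0$; since $V^{\trans} g \sim \norm(0, I_n)$, standard computation gives $\expect_g \exp(8 t^2 \sigma^4 g^{\trans} M^2 g) = \prod_i (1 - 16 t^2 \sigma^4 \mu_i)^{-1/2}$ whenever $16 t^2 \sigma^4 \mu_i < 1$ for all $i$. The elementary inequality $(1-x)^{-1/2} \leq e^x$ for $x \in [0, 1/2]$ together with $\mu_{\max} \leq \tr(M^2)$ (the trace bounds the maximum eigenvalue of a PSD matrix) then yields $\prod_i (1 - 16 t^2 \sigma^4 \mu_i)^{-1/2} \leq \exp(16 t^2 \sigma^4 \tr(M^2))$ on the event $\{32 \sigma^4 \tr(M^2) t^2 \leq 1\}$. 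Chaining these four inequalities delivers the claimed bound with the stated constant $16$, and the admissibility condition $32 \sigma^4 \tr(M^2) t^2 \leq 1$ appears exactly where the bound $(1-x)^{-1/2} \leq e^x$ is applied. The main obstacle is not any single step, which are each standard, but the careful accounting of conditioning on $\mathcal{G}$ throughout, and of the numerical constants (factor of $4$ from decoupling, factor of $1/2$ from subgaussianity, factor of $\sqrt{2}$ from the Gaussian identity, and factor of $2$ from $(1-x)^{-1/2} \leq e^x$), which conspire to produce $16$ in the exponent and $32$ in the admissibility constraint.
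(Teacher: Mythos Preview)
Your proof is correct and follows the same Rudelson--Vershynin strategy as the paper: Bernoulli decoupling via Jensen, two passes of subgaussianity with a Gaussian linearisation in between, and the final $(1-x)^{-1/2}\le e^x$ step. The one organisational difference is that the paper never passes to the full decoupled bilinear form $\eps^{\trans}M\eps'$; it keeps the partial sum $\sum_{i\in S,\,j\in S^{\cmp}}M_{ij}\eps_i\eps_j$ throughout and ends with the Gaussian quadratic form $\delta^{\trans}(I-F)MFM(I-F)\delta$, whose trace is bounded by $\tr(M^2)$. Your route is algebraically tidier (you work with $M^2$ directly), but it leans on the full de~la~Pe\~na--Montgomery-Smith step---going from the sum over $S\times S^{\cmp}$ to the full $\eps^{\trans}M\eps'$ requires one more conditional Jensen argument that your sketch glosses over, whereas the paper's version is entirely self-contained. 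One arithmetic slip: in the Gaussian linearisation you write $4t\sigma\sqrt{2}$, but $\sqrt{2\lambda}=\sqrt{16t^2\sigma^2}=4\lvert t\rvert\sigma$ with no extra $\sqrt{2}$; your subsequent bound $\exp(8t^2\sigma^4 g^{\trans}M^2 g)$ and the final constants $16$ and $32$ are nonetheless correct, so the $\sqrt{2}$ is just a stray factor in that line.
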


\begin{proof}
We follow the proof of Theorem 1.1 of \cite{rudelson2013hanson}. Let
\begin{equation*}
Z = \eps^{\trans} (M - I \circ M ) \eps = \sum_{i \neq j} M_{i, j} \eps_i \eps_j.
\end{equation*}
Also, let $\phi_i$ for $1 \leq i \leq n$ be random variables on $(\Omega, \mathcal{F}, \prob)$ which are independent of each other, the $\eps_i$ and $\mathcal{G}$, with $\phi_i \sim \bern(1/2)$. Furthermore, let
\begin{equation*}
W = \sum_{i \neq j} \phi_i (1 - \phi_j) M_{i, j} \eps_i \eps_j.
\end{equation*}
We have $Z = 4 \expect(W \vert \mathcal{G}, \eps)$ almost surely, which gives
\begin{equation*}
\exp(t Z) \leq \expect(\exp(4 t W) \vert \mathcal{G}, \eps)
\end{equation*}
almost surely for $t$ a random variable on $(\Omega, \mathcal{G})$ by Jensen's inequality. Let
\begin{equation*}
S = \{ 1 \leq i \leq n : \phi_i = 1 \}.
\end{equation*}
We can write
\begin{equation*}
W = \sum_{i \in S, j \in S^{\cmp}} M_{i, j} \eps_i \eps_j.
\end{equation*}
Since the $\eps_j$ are independent, we have
\begin{align*}
\expect(\exp(t Z) \vert \mathcal{G}) &\leq \expect(\exp(4 t W) \vert \mathcal{G}) \\
&= \expect \left ( \prod_{j \in S^{\cmp}} \expect \left ( \exp \left ( 4 t \sum_{i \in S} M_{i, j} \eps_i \eps_j \right) \middle \vert \mathcal{G}, \phi \right ) \middle \vert \mathcal{G} \right ) \\
&\leq \expect \left ( \prod_{j \in S^{\cmp}} \exp \left ( 8 t^2 \sigma^2 \left ( \sum_{i \in S} M_{i, j} \eps_i \right )^2 \right) \middle \vert \mathcal{G} \right ) \\
&= \expect \left ( \exp \left ( 8 t^2 \sigma^2 \sum_{j \in S^{\cmp}} \left ( \sum_{i \in S} M_{i, j} \eps_i \right )^2 \right) \middle \vert \mathcal{G} \right )
\end{align*}
almost surely. Let $\delta_i$ for $1 \leq i \leq n$ be random variables on $(\Omega, \mathcal{F}, \prob)$ which are independent of each other, the $\eps_i$, the $\phi_i$ and $\mathcal{G}$, with $\delta_i \sim \norm(0, \sigma^2)$. Since the $\eps_i$ are independent, we have
\begin{align*}
\expect(\exp(t Z) \vert \mathcal{G}) &\leq \expect \left ( \exp \left ( 4 t \sum_{j \in S^{\cmp}} \sum_{i \in S} M_{i, j} \eps_i \delta_j \right) \middle \vert \mathcal{G} \right ) \\
&= \expect \left ( \prod_{i \in S} \expect \left ( \exp \left ( 4 t \sum_{j \in S^{\cmp}} M_{i, j} \delta_j \eps_i \right) \middle \vert \mathcal{G}, \phi \right ) \middle \vert \mathcal{G} \right ) \\
&\leq \expect \left ( \prod_{i \in S} \exp \left ( 8 t^2 \sigma^2 \left ( \sum_{j \in S^{\cmp}} M_{i, j} \delta_j \right )^2 \right) \middle \vert \mathcal{G} \right ) \\
&= \expect \left ( \exp \left ( 8 t^2 \sigma^2 \sum_{i \in S} \left ( \sum_{j \in S^{\cmp}} M_{i, j} \delta_j \right )^2 \right) \middle \vert \mathcal{G} \right )
\end{align*}
almost surely. Let $F$ be the $n \times n$ matrix with $F_{i, j} = 1$ if $i = j \in S$ and 0 otherwise. Note that $F$ is an $(\real^{n \times n}, \bor(\real^{n \times n}))$-valued measurable matrix on $(\Omega, \sa(\phi))$. Then
\begin{equation*}
\expect(\exp(t Z) \vert \mathcal{G}) \leq \expect \left ( \exp \left ( 8 t^2 \sigma^2 \delta^{\trans} (I - F) M F M (I - F) \delta \right) \middle \vert \mathcal{G} \right )
\end{equation*}
almost surely. By Lemma \ref{lDiag}, there exist an orthogonal matrix $A$ and a diagonal matrix $D$ which are both $(\real^{n \times n}, \bor(\real^{n \times n}))$-valued measurable matrices on $(\Omega, \sa(\mathcal{G}, \phi))$ such that
\begin{equation*}
(I - F) M F M (I - F) = A D A^{\trans},
\end{equation*}
which is non-negative definite. Since $A^{\trans} \delta$ and $\delta$ have the same distribution given $\mathcal{G}$, we have
\begin{align*}
\expect(\exp(t Z) \vert \mathcal{G}) &\leq \expect \left ( \exp \left ( 8 t^2 \sigma^2 \delta^{\trans} D \delta \right) \middle \vert \mathcal{G} \right ) \\
&= \expect \left ( \prod_{i = 1}^n \expect \left ( \exp \left ( 8 t^2 \sigma^2 D_{i, i} \delta_i^2 \right ) \middle \vert \mathcal{G}, \phi \right ) \middle \vert \mathcal{G} \right ) \\
&= \expect \left ( \prod_{i = 1}^n \left (1 - 16 \sigma^4 D_{i, i} t^2 \right )^{-1/2} \middle \vert \mathcal{G} \right )
\end{align*}
almost surely for $16 \sigma^4 (\max_{1 \leq i \leq n} D_{i, i}) t^2 < 1$ by computing the moment generating function of the $\delta_i^2$. We have that $(1 - x)^{- 1/2} \leq \exp(x)$ for $x \in [0, 1/2]$, so
\begin{equation*}
\expect(\exp(t Z) \vert \mathcal{G}) \leq \expect \left ( \prod_{i = 1}^n \exp \left (16 \sigma^4 D_{i, i} t^2 \right ) \middle \vert \mathcal{G} \right ) = \expect \left ( \exp \left (16 \sigma^4 \tr(D) t^2 \right ) \middle \vert \mathcal{G} \right )
\end{equation*}
almost surely for $32 \sigma^4 (\max_{1 \leq i \leq n} D_{i, i}) t^2 \leq 1$. We have
\begin{equation*}
\tr(D) = \tr( (I - F) M F M (I - F) ) = \sum_{i \in S} \sum_{j \in S^{\cmp}} M_{i, j}^2 \leq \sum_{i  = 1}^n \sum_{j = 1}^n M_{i, j}^2 = \tr(M^2)
\end{equation*}
and
\begin{equation*}
\max_{1 \leq i \leq n} D_{i, i} \leq \tr(D) \leq \tr(M^2).
\end{equation*}
The result follows.
\end{proof}
We move the bound on the conditional moment generating function of $\eps^{\trans} (M - I \circ M ) \eps$ to that of $\lvert \eps^{\trans} (M - I \circ M ) \eps \rvert$.

\begin{lemma} \label{lQuadPsi1}
Let $\eps_i$ for $1 \leq i \leq n$ be random variables on $(\Omega, \mathcal{F}, \prob)$ which are independent conditional on some sub-$\sigma$-algebra $\mathcal{G} \subs \mathcal{F}$ and let
\begin{equation*}
\expect( \exp(t \eps_i) \vert \mathcal{G}) \leq \exp( \sigma^2 t^2 / 2)
\end{equation*}
almost surely for $t$ a random variable on $(\Omega, \mathcal{G})$. Let $M$ be an $n \times n$ symmetric matrix which is an $(\real^{n \times n}, \bor(\real^{n \times n}))$-valued measurable matrix on $(\Omega, \mathcal{G})$. We have
\begin{equation*}
\expect \left ( \exp \left ( t \rvert \eps^{\trans} (M - I \circ M ) \eps \rvert \right ) \middle \vert \mathcal{G} \right ) \leq \frac{1}{1 - 2^{7/2} \sigma^2 \tr(M^2)^{1/2} t} 2^{2^{7/2} \sigma^2 \tr(M^2)^{1/2} t}
\end{equation*}
almost surely for $t \geq 0$, a random variable on $(\Omega, \mathcal{G})$, such that $2^{7/2} \sigma^2 \tr(M^2)^{1/2} t < 1$. Hence,
\begin{equation*}
\expect \left ( \frac{\left \lvert \eps^{\trans} (M - I \circ M ) \eps \right \rvert}{2^{7/2} (\log 2) \sigma^2 \tr(M^2)^{1/2}/\log(5/4)} \middle \vert \mathcal{G} \right ) \leq 2.
\end{equation*}
\end{lemma}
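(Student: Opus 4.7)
The plan is to derive a conditional two-sided tail bound on $Z := \eps^{\trans}(M - I \circ M)\eps$, integrate it to produce the claimed bound on $\expect(\exp(t|Z|) \mid \mathcal{G})$, and then specialise $t$ to recover the $\psi_1$ norm estimate.

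First, observe that $-M$ is symmetric and $(\real^{n \times n}, \bor(\real^{n \times n}))$-valued measurable on $(\Omega, \mathcal{G})$ with $\tr((-M)^2) = \tr(M^2)$, so Lemma \ref{lQuadMGF} applied to both $M$ and $-M$ gives
\begin{equation*}
\expect(\exp(\pm s Z) \mid \mathcal{G}) \leq \exp(16 \sigma^4 \tr(M^2) s^2)
\end{equation*}
almost surely for $\mathcal{G}$-measurable $s$ satisfying $32 \sigma^4 \tr(M^2) s^2 \leq 1$. A union bound combined with conditional Chernoff yields
\begin{equation*}
\prob(|Z| > u \mid \mathcal{G}) \leq 2 \exp(16 \sigma^4 \tr(M^2) s^2 - s u)
\end{equation*}
almost surely for each $u \geq 0$ and each admissible $\mathcal{G}$-measurable $s \geq 0$. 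Optimising $s$ over $u$ produces a Gaussian-type tail on the range $u \leq K_{\min} := 2^{5/2} \sigma^2 \tr(M^2)^{1/2}$ and a subexponential-type tail with slope $1/K_{\min}$ on the complementary range.

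Next, for $\mathcal{G}$-measurable $t \geq 0$ satisfying $t K < 1$ with $K := 2^{7/2} \sigma^2 \tr(M^2)^{1/2} = 2 K_{\min}$, I would use the layer-cake identity
\begin{equation*}
\expect(\exp(t |Z|) \mid \mathcal{G}) = 1 + \int_0^\infty t e^{t u} \prob(|Z| > u \mid \mathcal{G}) \, du,
\end{equation*}
splitting the range at the Gaussian/subexponential transition. The Gaussian portion is bounded via completion of the square, while the upper portion is a geometric integral that produces the factor $1/(1 - t K)$. A careful choice of the Chernoff parameter and the split point should collapse the overhead constants into exactly the factor $2^{t K}$, yielding the first displayed bound $\expect(\exp(t|Z|) \mid \mathcal{G}) \leq 2^{tK}/(1 - tK)$.

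Finally, I would set $t = 1/C$ with $C = K \log 2 / \log(5/4) = 2^{7/2}(\log 2) \sigma^2 \tr(M^2)^{1/2}/\log(5/4)$, so that $t K = \log(5/4)/\log 2$ and hence $2^{tK} = 5/4$. A direct numerical check gives
\begin{equation*}
\frac{2^{t K}}{1 - t K} = \frac{5/4}{1 - \log(5/4)/\log 2} \leq 2,
\end{equation*}
equivalent to the elementary inequality $\log(5/4) + (5/8)\log 2 \leq \log 2$. By the definition of the Orlicz norm, this yields the second display (with $\exp$ inside the expectation, as implicit in the $\psi_1$ formulation). The main obstacle is the constant bookkeeping in the integration step: producing exactly $2^{tK}$ in the numerator, rather than a form with stray factors like $e^{tK/2}$ or $\sqrt{2\pi}$, requires treating the Chernoff parameter as an optimising function of $u$ rather than a single global value, and carefully matching the Gaussian-regime contribution to the subexponential-regime contribution at the chosen split point.
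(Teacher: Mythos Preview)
Your overall strategy---apply Lemma~\ref{lQuadMGF} to $\pm M$, Chernoff-bound to obtain a two-regime tail on $|Z|$, then integrate the tail to recover the MGF of $|Z|$, and finally specialise $t$---is exactly the paper's approach, and your treatment of the second display (setting $tK = \log(5/4)/\log 2$ and checking the numerical inequality) matches the paper's.

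The one place where the paper does something cleaner is the integration step, and it resolves precisely the obstacle you flagged. Rather than integrating the Gaussian regime explicitly (which, as you suspect, would introduce stray factors like $\sqrt{2\pi}$ or $e^{t^2 K_{\min}^2/2}$ that do not collapse to $2^{tK}$), the paper uses the layer-cake form
\[
\expect(e^{t|Z|}\mid\mathcal{G}) = \int_0^\infty \prob(|Z|>(\log s)/t\mid\mathcal{G})\,ds \le C + 2\int_C^\infty s^{-1/(tK)}\,ds
\]
for any $C\ge 1$ with $C\ge e^{K_{\min}t}$, so that only the subexponential tail contributes to the integral and the Gaussian regime is absorbed into the trivial bound $\prob(\cdot)\le 1$ on $[0,C]$. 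The right-hand side equals $C + \tfrac{2tK}{1-tK}C^{-(1-tK)/(tK)}$; differentiating in $C$ gives the optimum $C=2^{tK}$ (one checks $2^{tK}\ge e^{K_{\min}t}$ since $2\log 2>1$), and the minimum value is exactly $2^{tK}/(1-tK)$. This avoids the constant bookkeeping entirely.
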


\begin{proof}
Let
\begin{equation*}
Z = \eps^{\trans} (M - I \circ M ) \eps.
\end{equation*}
By Lemma \ref{lQuadMGF}, we have
\begin{equation*}
\expect(\exp(t Z) \vert \mathcal{G}) \leq \exp \left (16 \sigma^4 \tr(M^2) t^2 \right )
\end{equation*}
almost surely for $t$ a random variable on $(\Omega, \mathcal{G})$ such that $32 \sigma^4 \tr(A^2) t^2 \leq 1$. By Chernoff bounding, we have
\begin{equation*}
\prob(Z \geq z \vert \mathcal{G}) \leq \exp \left ( -t z + 16 \sigma^4 \tr(M^2) t^2 \right )
\end{equation*}
almost surely for $z \geq 0$, a random variable on $(\Omega, \mathcal{G})$, $t \geq 0$ and $32 \sigma^4 \tr(A^2) t^2 \leq 1$. Minimising over $t$ gives
\begin{equation*}
\prob(Z \geq z \vert \mathcal{G}) \leq \exp \left ( - \min \left ( \frac{z^2}{2^6 \sigma^4 \tr(M^2)}, \frac{z}{2^{7/2} \sigma^2 \tr(M^2)^{1/2}} \right ) \right )
\end{equation*}
almost surely. The first term in the minimum is attained by $t = 2^{-5} \sigma^{-4} \tr(M^2)^{-1} z$ when $z < 2^{5/2} \sigma^2 \tr(M^2)^{1/2}$, and the second term is attained by $t = 2^{-5/2} \sigma^{-2} \tr(M^2)^{-1/2}$ when $z \geq 2^{5/2} \sigma^2 \tr(M^2)^{1/2}$. In the second case, note that
\begin{equation*}
16 \sigma^4 \tr(M^2) t^2 = \frac{1}{2} \leq \frac{z}{2^{7/2} \sigma^2 \tr(M^2)^{1/2}}.
\end{equation*}
 The same result holds if we replace $Z$ with $- Z$ by replacing $M$ with $- M$. For $C \geq 1$ and $t \geq 0$, random variables on $(\Omega, \mathcal{G})$, we have
\begin{align*}
\expect(\exp(t \lvert Z \rvert) \vert \mathcal{G}) &= \int_0^\infty \prob( \lvert Z \rvert \geq (\log s) / t \vert \mathcal{G}) ds \\
&\leq C + \int_C^\infty \prob( \lvert Z \rvert \geq (\log s) / t \vert \mathcal{G}) ds \\
&\leq C + \int_C^\infty \prob(Z \geq (\log s) / t \vert \mathcal{G}) ds + \int_C^\infty \prob(- Z \geq (\log s) / t \vert \mathcal{G}) ds \\
&\leq C + 2 \int_C^\infty \exp \left ( - \min \left ( \frac{(\log s)^2}{2^6 \sigma^4 \tr(M^2) t^2}, \frac{\log s}{2^{7/2} \sigma^2 \tr(M^2)^{1/2} t} \right ) \right ) ds
\end{align*}
almost surely. By letting $C \geq \exp(2^{5/2} \sigma^2 \tr(M^2)^{1/2} t)$, the bound becomes
\begin{equation*}
C + 2 \int_C^\infty s^{- (2^{7/2} \sigma^2 \tr(M^2)^{1/2} t)^{-1}} ds.
\end{equation*}
Let $u = 2^{7/2} \sigma^2 \tr(M^2)^{1/2} t$, a random variable on $(\Omega, \mathcal{G})$. Differentiating this bound with respect to $C$ gives $1 - 2 C^{- u^{-1}}$, so the bound is minimised by $C = 2^u$. This satisfies the condition on $C$ above as
\begin{equation*}
e^{2^{5/2}} \leq 3^6 \leq 2^{10} \leq 2^{2^{7/2}}.
\end{equation*}
For $u < 1$, the bound becomes
\begin{align*}
C + 2 \frac{u}{1 - u} C^{- (1 - u)/u} &= 2^u + \frac{u}{1 - u} 2^{1 - (1 - u)} \\
&= \frac{1}{1 - u} 2^u.
\end{align*}
The first result follows. Let
\begin{equation*}
u = \frac{\log b}{\log 2}
\end{equation*}
for $b > 1$. We have
\begin{equation*}
\expect(\exp(t \lvert Z \rvert) \vert \mathcal{G}) \leq 2
\end{equation*}
almost surely if $b^2 2^b \leq 4$. This holds for $b = 5/4$ and the second result follows.
\end{proof}

\bibliographystyle{plainnat}
\bibliography{paper}